\newif\ifaux
\else\renewcommand{\eref}{\relax}\fi
\newtheorem{thm}{Theorem}[section]
\newtheorem{theorem}[thm]{Theorem}
\newtheorem{lemma}[thm]{Lemma}
\newtheorem{proposition}[thm]{Proposition}
\newtheorem{definition}[thm]{Definition}
\newtheorem{remark}[thm]{Remark}
\newenvironment{proof}{\smallskip\noindent{\bf Proof.}\rm}
                        {\hspace*{\fill} $\Box$\medskip}
\newenvironment{proposition*}[1]{\smallskip\noindent{\bf #1.}\it}{\medskip}
\newenvironment{theorem*}[1]{\smallskip\noindent{\bf #1.}\it}{\medskip}
\newenvironment{proofof}[1]{\smallskip\noindent{\bf #1}\rm}
                {\hspace*{\fill} $\Box$\medskip}
\newcommand{\operatorname}{\mathop}
\newcommand\codim{\operatorname{codim}}
\renewcommand\Im{\operatorname{Im}}
\renewcommand\Re{\operatorname{Re}}
\newcommand\Ran{\operatorname{Ran}}
\newcommand\res{\operatorname{res}}
\newcommand\sign{\operatorname {sign}}
\newcommand\rank{\operatorname {rank}}
\newcommand{\bN}{\mathbb N}
\begin{document}

\title[Matrix Sturm--Liouville operators]%
{Inverse spectral problems for Sturm--Liouville operators with matrix-valued potentials}
\author{Ya~V~Mykytyuk and N~S~Trush}%
\address{Lviv National University, 1 Universytets'ka st.,
    79602 Lviv, Ukraine}%
\eads{\mailto{yamykytyuk@yahoo.com}, \mailto{n.s.trush@gmail.com}}

\date{30 June 2009}%

\begin{abstract}
We give a complete description of the set of spectral data (eigenvalues and specially introduced norming constants) for Sturm--Liouville operators on the interval~$[0,1]$ with matrix-valued potentials in the Sobolev space~$W_2^{-1}$ and suggest an algorithm reconstructing the potential from the spectral data that is based on Krein's accelerant method.
\end{abstract}

\ams{Primary 34L20, Secondary 34B24, 34A55\\%
{\it Keywords:\/} Sturm--Liouville operator, matrix-valued
potential}%


\section{Introduction}\label{sec.1}

The main aim of this paper is to solve the inverse spectral problem for Sturm--Liouville operators on the unit interval~$[0,1]$ with matrix-valued distributional potentials in the Sobolev space~$W_2^{-1}$. Before explaining the setting, we shall give a short overview of the known results; the reader is referred to the books~\cite{Lev,Mar,PT} and the review paper~\cite{Ges} for further details.

\subsection{Known results}

The study of the inverse spectral problems for Sturm--Liouville and Dirac operators has a rather long history. It was initiated by the celebrated paper by Borg~\cite{BO46} of~1946, who proved that two spectra of a Sturm--Liouville equation determine uniquely the potential. Later Marchenko~\cite{Mar50} showed that the potential is uniquely determined by the spectral function. In 1951, there appeared the paper by Gelfand and Levitan~\cite{GL51} giving an algorithm reconstructing the potential from the spectral measure, and the paper of Marchenko~\cite{Mar51} with a detailed account of his method. Meanwhile Krein developed in a series of papers~\cite{Kr51,Kr53,Kr54,Kr56} an alternative approach to inverse problems.
As is well explained by Ramm~\cite{Ramm}, each of the three methods---that of Gelfand and Levitan, Marchenko, and Krein---enlightens a different facet of the problem and is equally important to the theory.

Another important tool for the study of the inverse spectral problems was developed by Trubowitz and his coauthors Isaacson~\cite{IT83}, Isaacson and  McKean~\cite{IMT84},
Dalhberg~\cite{DT84}, and P\"{o}schel~\cite{PT}. It is based on analytic dependence of an individual eigenvalue and norming constant on the potential. In some cases (e.g. in the inverse problem for the per\-turbed harmonic oscillator) the Trubowitz method is probably the only one yielding a satisfactory result~\cite{CKK04,CK07}.

The study of the matrix-valued Sturm--Liouville and Dirac operators has mainly followed the path of generalizing the methods developed in the scalar case. A solution to the inverse scattering problem for matrix Sturm--Liouville operators on the half-line was given by
 Newton and Jost~\cite{NJ},
 Krein~\cite{Kr55,Kr56},
 Agranovich and Marchenko~\cite{AM63}.
Analogous results for Dirac operators on the half-line were obtained in the papers of Levitan and Gasymov~\cite{LG66} and Gasymov~\cite{Gasym}, and for more general first order systems by Lesch and Malamud~\cite{LM}.

Inverse spectral problems for matrix-valued Sturm--Liouville and Dirac operators on a finite interval have not been discussed so extensively. Uniqueness of reconstruction was studied in the papers by Carlson~\cite{Ca} and Yurko~\cite{Yu1}, and Malamud~\cite{Mal1} investigated uniqueness for various settings of inverse problems for systems of differential equations of the first order. Yurko in~\cite{Yu1} and~\cite{Yu2} suggested an algorithm reconstructing the potential from the spectral data, but did not give a complete description of the set of spectral data. Probably the closest to our work are the papers by Chelkak and Korotyaev~\cite{CK,CK1}, which we  next discuss in more detail.

In~\cite{CK}, the authors investigated uniqueness questions for operators
\begin{equation}\label{eq.1a}
       T_q = -\frac{\rmd^2}{\rmd x^2} + q
\end{equation}
on~$(0,1)$ subject to the Dirichlet boundary conditions and with Hermitian matrix-valued
potentials~$q$ in $L_1(0,1)$ entrywise and studied isospectral transformations as well as some subtle properties of the local structure of the set of isospectral potentials. An important phenomenon was discovered there, namely, the one of the so called ``forbidden'' subspaces. In~\cite{CK1}, the inverse spectral problem for operators $T_q$ with potentials~$q$ belonging to~$L_2(0,1)$ entrywise was treated. The main theorem there gives a complete description of the set of spectral data. That result of~\cite{CK1} and Theorem~\ref{th.15} of the present paper are similar but they treat different classes of potentials. Also the approaches are different: while the proofs in~\cite{CK1} are based on the Trubowitz method and some special isospectral transformations, we use the Krein accelerant method.


\subsection{Setting of the problem}
Let $M_r$ be the Banach algebra of the square~$r\times r$ matrices with complex entries, which we identify with the Banach algebra of linear operators~$\mathbb
C^r\to \mathbb C^r$ endowed with the standard norm (see~\ref{add.1}).
We shall write $I$ for the unit element of~$M_r$ and
$M^+_r$ for the set of all $A\in M_r$ such that $A=A^*\ge 0$.
We shall use the abbreviations
\begin{equation*}
        W_p^s:=W_p^s((0,1),\mathbb{C}^r),\qquad \boldsymbol
        W_p^s:=W_p^s((0,1),\mathit{M}_r) \qquad(s\in\mathbb{Z}, p\geq 1)
\end{equation*}
for the corresponding Sobolev spaces, as well as the notations
\begin{eqnarray*}
& L_p:=L_p((0,1), \mathbb{C}^r),\qquad &\mathbf L_p:=L_p((0,1),\mathit{M}_r), \qquad p\geq1,\\
& \Re \mathbf L_p:=\{u\in \mathbf L_p|u=u^{\ast}\},\qquad &
\Re\mathbf W^{-1}_2:=\{u\in \mathbf W^{-1}_2|u=u^{\ast}\}.
\end{eqnarray*}
Some further information on these spaces can be found in~\ref{add.1}.

For an arbitrary $\tau\in\mathbf L_2$, we consider the
differential expression
\begin{equation*}
         {\mathfrak t_\tau}(f):= -\left(\frac{\rmd}{\rmd x}+\tau\right)\left(\frac{\rmd}{\rmd x}-\tau \right)f
\end{equation*}
on the domain
\begin{equation*}
         \mathit{D}(\mathfrak t_\tau):=\{f\in W_2^1 \mid
         f_\tau^{[1]}:= (f'-\tau f)\in W_2^1\}.
\end{equation*}
The function $f_\tau^{[1]}$ is usually called the \emph{quasi-derivative} of~$f$.

Denote by $\mathfrak t_{\tau,D} $ and $\mathfrak t_{\tau,N} $ the restrictions of
$\mathfrak t_{\tau} $ onto the domains
\begin{eqnarray*}\label{eq.12i}
             \mathit{D}(\mathfrak t_{\tau,D})&:=\{f\in \mathit{D}(\mathfrak t_\tau) \mid
                                                  f\in W_{2,0}^1 \},\\
             \mathit{D}(\mathfrak t_{\tau,N})&:=\{f\in \mathit{D}(\mathfrak t_\tau) \mid
                                                  f_{\tau}^{[1]}\in W_{2,0}^1 \}
\end{eqnarray*}
respectively. Here and hereafter, $W_{2,0}^1:=\{f\in W_2^{1}\,|\,f(0)=f(1)=0\}.$

The differential expression $\mathfrak t_{\tau,D} $ (unlike $\mathfrak
t_{\tau,N} $) can be written in the usual potential form. Namely, let us define the Miura map~$b$ via (cf.~\cite{KPST})
\begin{equation*}
            \mathbf L_2 \ni u\mapsto u'+u^2=:b(u)\in \mathbf W_2^{-1}
\end{equation*}
and the class of (Hermitian) Miura potentials
\begin{equation*}
           \mathfrak M := \{q=b(u)\mid u\in \mathbf L_2\}, \qquad
           \left(\Re\mathfrak M := \{q=b(u)\mid u\in \Re\mathbf L_2\}\right).
\end{equation*}

It can be shown (see Lemma~\ref{le.22}) that for an arbitrary
$q\in \mathfrak M$ and $\tau\in b^{-1}(q)$ one has
\begin{equation*}
         \mathit{D}(\mathfrak t_{\tau,D})=\{f\in  W_{2,0}^1 \mid (-f''+q f)\in L_2 \},\qquad
         \mathfrak t_{\tau,D}(f)=-f''+qf,
\end{equation*}
where the derivative~$f''$ and the product $qf$ should be
understood in the distributional sense. In particular,
\begin{equation}\label{eq.11a}
         \mathfrak t_{\tau_1,D}=\mathfrak t_{\tau_2,D}\quad \mathrm{if}\quad
          b(\tau_1)=b(\tau_2).
\end{equation}

For $\tau\in\mathbf L_2$ and $q:=b(\tau)$, we consider the
operators $S_\tau$ and $T_q$ acting in~$L_2$ via
\begin{eqnarray*}
                    S_\tau f &:=\frak t_{-\tau,N}(f), \qquad \qquad
                               \mathit{D}(S_\tau) &:=\mathit{D}(\frak t_{-\tau,N}),\\
                    T_qf &:=-f''+q f,\qquad \quad \,\,
                               \mathit{D}(T_q) &:=\mathit{D}(\frak t_{\tau,D}).
\end{eqnarray*}
In this paper we shall concentrate ourselves on the study of the spectral properties of the operators $S_\tau$ and $T_q$ for
 $\tau\in \Re\mathbf L_2$ and $q= b(\tau)$. In this case the operators $S_\tau$ and $T_q$ are self-adjoint; moreover, $S_\tau\ge 0$ and
$T_q>0$. Their spectra $\sigma(S_\tau)$ and $\sigma(T_q)$ consist of a countably many
isolated eigenvalues accumulating at~$+\infty$; moreover,
$\sigma(S_{\tau})=\sigma(T_q)\cup\{0\}$.

Let $\tau\in\mathbf L_2$ and $\lambda\in\mathbb C$. We denote by
$\varphi(\cdot,\lambda,\tau)$ and $\psi(\cdot ,\lambda,\tau)$ the
matrix-valued solutions of the Cauchy problems
\begin{eqnarray}\label{eq.11}
       -\left(\frac{\rmd}{\rmd x}+\tau\right)\left(\frac{\rmd}{\rmd x}-\tau \right)\varphi=\lambda^2\varphi,\qquad
                 \varphi(0)=0,\qquad
                 \varphi_{\tau}^{[1]}(0)=\lambda I,\\
                    \label{eq.12}
      -\left(\frac{\rmd}{\rmd x}-\tau\right)\left(\frac{\rmd}{\rmd x}+\tau \right)\psi=\lambda^2 \psi,\qquad
                 \psi(0)=I,\qquad
                 \psi_{-\tau}^{[1]}(0)=0.
\end{eqnarray}
They are related to each other via
\begin{equation*}
            \left(\frac{\rmd}{\rmd x}-\tau\right)\varphi(\cdot,\lambda,\tau)
                                      =\lambda\psi(\cdot,\lambda,\tau), \qquad
            \left(\frac{\rmd}{\rmd x}+\tau\right)\psi(\cdot,\lambda,\tau)
                                      =-\lambda\varphi(\cdot,\lambda,\tau)
\end{equation*}
and admit representations in the form (see Theorem~\ref{th.21})
\begin{equation}\label{eq.13}
        \eqalign{
                  \varphi(x,\lambda,\tau)=\sin \lambda x I+\int_{0}^{x}(\sin\lambda t) K_{\tau,D}(x,t)\,\rmd t, \\
        \eqalign
                  \psi(x,\lambda,\tau)=\cos \lambda x I+\int_0^x (\cos\lambda t)K_{\tau,N}(x,t)\,\rmd t,}
\end{equation}
where the matrix-valued kernels $K_{\tau,D}$ and $K_{\tau,N}$
belong to the algebra~$G_2^+$ (see~\ref{add.1}).
Equalities~\eref{eq.13} determine uniquely $K_{\tau,D}$ and
$K_{\tau,N}$  within the class $L_2(\Omega, M_r)$,
$\Omega:=\{(x,t)\mid 0\le t\le x\le 1 \}.$

The function $\lambda\mapsto \varphi(1,\lambda,\tau)^{-1}$, as well as the Weyl--Titchmarsh function
\begin{equation*}\label{eq.14}
         m_\tau(\lambda):=-\varphi(1,\lambda,\tau)^{-1}\psi(1,\lambda,-\tau)
\end{equation*}
are meromorphic in~$\mathbb C$; notice that
$m_0(\lambda)=-\cot\lambda I$.

Let now $\tau\in\Re\mathbf L_2$ and $q=b(\tau)$. We denote by
$\lambda_j(\tau)$ $(j\in \mathbb{Z_+})$ the square roots of the
pairwise distinct eigenvalues of the operator $S_\tau$ labelled in
increasing order, i.e.,
$0=\lambda_0(\tau)<\lambda_{k}(\tau)<\lambda_{k+1}(\tau)$,
$k\in\mathbb N$; then
\begin{equation*}
         \sigma(S_\tau)=\{\lambda_j^2(\tau)\}_{j=0}^{\infty},\qquad
         \sigma(T_q)=\{\lambda_j^2(\tau)\}_{j=1}^{\infty}.
\end{equation*}
The function $m_\tau$ is a
matrix-valued Herglotz function (i.e., $\Im m_\tau(\lambda)\ge 0$
for $\Im\lambda>0$), and the set
$\{\pm\lambda_j(\tau)\}_{j\in\mathbb Z_+}$ is the set of its
poles. Put by definition
\begin{equation}\label{eq.15}
         \alpha_0(\tau):=-\frac12\underset{\lambda=0}{\res} \,\,m_\tau(\lambda), \qquad
         \alpha_j(\tau):=-\underset{\lambda=\lambda_j}{\res} \,\,m_\tau(\lambda), \qquad
                       j\in \mathbb N.
\end{equation}
The matrix $\alpha_j(\tau)$ for $j\in\mathbb Z_+ $ (resp.\, for $j\in\mathbb
N$) is called the \emph{norming constant} of the operator~$S_\tau$
(resp.\, of the operator~$T_q$) corresponding to the eigenvalue~$\lambda^2_j(\tau)$. We note that the multiplicity of the eigenvalue~$\lambda_j^2(\tau)$ of $S_\tau$ or $T_q$ equals~$\rank \alpha_j(\tau)$.

It turns out that a sequence $((\lambda_j(\tau),\alpha_j(\tau)))_{j\in \mathbb{N}}$ depends only on the function $q=b(\tau)$. In view of
this we call the collections $\mathfrak a_\tau=((\lambda_j(\tau),\alpha_j(\tau)))_{j\in \mathbb{Z_+}}$ and
$\mathfrak b_q =((\lambda_j(\tau),\alpha_j(\tau)))_{j\in
\mathbb{N}}$ a sequences of \emph{spectral data}, and the matrix-valued
measures
\begin{equation}\label{eq.16}
         \nu_\tau:=\sum_{j=0}^{\infty}\alpha_j(\tau)\delta_{\lambda_j(\tau)}, \qquad
         \mu_q:=\sum_{j=1}^{\infty}\alpha_j(\tau)\delta_{\lambda_j(\tau)}
\end{equation}
will be termed the \emph{spectral measures} of the operators $S_\tau$ and
$T_q$ respectively. Here $\delta_{\lambda}$ is the Dirac
delta-measure centred at the point $\lambda$. In particular, if
$\tau=q=0$, then
\begin{equation}\label{eq.17}
         \nu_0=\frac12 I\delta_0+\sum\limits_{n=1}^{\infty}I\delta_{\pi n}, \qquad
         \mu_0=\sum\limits_{n=1}^{\infty}I\delta_{\pi n}.
\end{equation}
Clearly, the spectral measures of the operators~$S_\tau$ and $T_q$ are related by the simple formula
\begin{equation*}
         \nu_{\tau}-\mu_q=\alpha_0(\tau)\delta_0, \qquad
         \tau\in\Re\mathbf L_2, \quad
                   q=b(\tau).
\end{equation*}
A simple relation exists also between the function~$m_\tau$ and the measure $\nu_\tau$. Namely,
\begin{equation*}
          m_\tau(\lambda)=2\lambda\int_0^{\infty}\frac{\rmd \nu_\tau(\xi)}{\xi^2-\lambda^2}, \qquad
          \lambda\in\mathbb C.
\end{equation*}

The main aim of the present paper is to give a complete description of the classes
    $\mathfrak A :=\{\mathfrak a_{\tau}\mid \tau\in \Re \mathbf L_2\}$
and
    $\mathfrak B:=\{\mathfrak b_q\mid q\in \Re\mathfrak M \}$ of spectral data
and to suggest an efficient method of reconstructing the
functions~$\tau$ and $q$ from the measures~$\nu_{\tau}$ and~$\mu_q$ respectively. We note that the description of the
classes $\mathfrak A$ and $\mathfrak B$ is equivalent to the description of
the families of measures
  $\mathcal V:=\{\nu_{\tau}\mid\tau\in\Re\mathbf L_2\}$ and $\mathcal M:=\{\mu_q\mid q\in \Re\mathfrak M \}$.

\subsection{Main results}\label{subsec.13}

We start with characterization of the spectral data for Sturm--Liouville operators under consideration. In what follows $\mathfrak a$ (resp.~$\mathfrak b$) will stand for an arbitrary sequence
$((\lambda_j,\alpha_j))_{j\in\mathbb Z_+}$
(resp.~$((\lambda_j,\alpha_j))_{j\in\mathbb N}$), in which
$(\lambda_{j})_{j\in\mathbb Z_+}$ is a strictly increasing sequence of non-negative numbers and  $\alpha_j$ are nonzero matrices in $M^+_r$, and $\nu^\mathfrak a$ and $\mu^\mathfrak
b$ will denote the measures given by
\begin{equation}\label{eq.111}
         \nu^\mathfrak a:=\sum_{j=0}^{\infty}\alpha_j\delta_{\lambda_j}, \qquad
          \mu^\mathfrak b:=\sum_{j=1}^{\infty}\alpha_j\delta_{\lambda_j}.
\end{equation}
Next, we partition the semi-axis $[0,\infty)$ into pairwise
disjoint intervals $\Delta_n$ $(n\in\mathbb Z_+)$, viz.
\begin{equation*}
         \Delta_0=\{0\},\qquad \Delta_1=\Bigl(0,{3\pi\over2}\Bigr], \qquad
             \Delta_n=\Bigl(\pi n-\frac{\pi}2,\pi n +\frac{\pi}2\Bigr], \quad
                     n>1.
\end{equation*}
A complete description of the classes $\mathfrak A$ and $\mathfrak B$ is given by the following two theorems.

\begin{theorem}\label{th.14}
In order that a sequence $\mathfrak
a=((\lambda_j,\alpha_j))_{j\in\mathbb Z_+}$ should belong to
$\mathfrak A$ it is necessary and sufficient that the following
conditions be satisfied:
\begin{itemize}
        \item[$(A_1)$]  $\sum\limits_{n=1}^{\infty}\sum\limits_{\lambda_j\in\Delta_n}|\lambda_j-\pi n|^2<\infty, \quad
                              \sup\limits_{n\in\mathbb N}\sum\limits_{\lambda_j\in\Delta_n} 1<\infty, \quad
                         \sum\limits_{n=1}^{\infty}\|I-\sum\limits_{\lambda_j\in\Delta_n}\alpha_j\|^2<\infty;$
        \item[$(A_2)$]  $\exists N_0\in\mathbb N\quad \forall N\in\mathbb N \qquad (N\ge N_0) \Longrightarrow
                              \sum\limits_{n=1}^{N}\sum\limits_{\lambda_j\in\Delta_n}\rank \alpha_j=Nr;$
        \item[$(A_3)$]  the system of functions
                                      $\{d\cos \lambda_{j} x\mid j\in\mathbb{Z_+}, d\in\Ran \alpha_j\}$
                        is complete in the space~$L_2$.
\end{itemize}
\end{theorem}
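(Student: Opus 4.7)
The plan is to prove necessity and sufficiency separately, the latter being the non-trivial direction, for which I will use the Krein accelerant method alluded to in the introduction.

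For \emph{necessity}, assume $\mathfrak a=\mathfrak a_\tau$ with $\tau\in\Re\mathbf L_2$. From~\eref{eq.13} the entire functions $\varphi(1,\lambda,\tau)$ and $\psi(1,\lambda,-\tau)$ differ from $\sin\lambda\,I$ and $\cos\lambda\,I$ respectively by Fourier transforms of $L_2$-kernels. The nonzero eigenvalues $\lambda_j^2(\tau)$ are precisely the squared zeros of $\det\varphi(1,\lambda,\tau)$; a Rouch\'e argument combined with the Riemann--Lebesgue lemma localises these zeros inside the intervals $\Delta_n$ near $\pi n$ and produces both the $\ell^2$ asymptotics and the uniform bound on the number of zeros per $\Delta_n$ appearing in~$(A_1)$. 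Applying Parseval's identity to the $L_2$-kernels together with the residue formula~\eref{eq.15} gives the third, quantitative estimate in~$(A_1)$; the same Rouch\'e count with algebraic multiplicities supplies~$(A_2)$; and $(A_3)$ amounts to completeness of the root vectors of the self-adjoint operator $S_\tau$ transported to $L_2$ by the triangular operator defined by~\eref{eq.13}.

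For \emph{sufficiency}, given $\mathfrak a$ satisfying $(A_1)$--$(A_3)$, I would form $\nu^\mathfrak a$ as in~\eref{eq.111} and introduce the matrix-valued \emph{accelerant}
\[
H(x):=2\int_{[0,\infty)}\cos(\lambda x)\,\rmd(\nu^\mathfrak a-\nu_0)(\lambda),\qquad x\in[-1,1],
\]
the integral being understood by grouping terms over the intervals $\Delta_n$; condition~$(A_1)$ ensures $H\in L_2((-1,1),M_r)$ with $H(-x)=H(x)^{*}$. Let $\mathcal H_x$ denote the associated Krein integral operator on $L_2((0,x),\mathbb C^r)$ with kernel $\tfrac12[H(s-t)+H(s+t)]$. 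The task is then to show that $I+\mathcal H_x$ is a positive and boundedly invertible operator for every $x\in(0,1]$; Krein's factorisation theorem delivers a unique kernel $K\in G_2^+$ solving
\[
K(x,t)+H(x-t)+\int_0^x K(x,s)H(s-t)\,\rmd s=0,\qquad 0\le t\le x\le 1,
\]
and the sought potential is recovered as $\tau(x):=-K(x,x)$, the diagonal trace being interpreted via the standard accelerant formula so as to land in $\Re\mathbf L_2$. A direct verification that $K$ coincides with $K_{-\tau,N}$ of~\eref{eq.13} and that the spectral measure of $S_\tau$ equals $\nu^\mathfrak a$ will then close the argument.

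The hard part will be the positive invertibility of $I+\mathcal H_x$ on every subinterval. In the scalar case it reduces at once to positivity of the spectral measure, but in the matrix setting the possibly non-trivial kernels of the norming constants $\alpha_j$ give rise to the ``forbidden subspaces'' phenomenon of~\cite{CK}. The invertibility of $I+\mathcal H_1$ turns out to be equivalent to $(A_3)$, whereas propagating invertibility down to $x<1$ requires the rank count~$(A_2)$ together with a perturbation argument inside the algebra $G_2^+$. Establishing this equivalence and monotonicity is where the bulk of the technical work lies, and is the point at which the Krein-accelerant approach adopted here diverges from the Trubowitz-type analysis of~\cite{CK1}.
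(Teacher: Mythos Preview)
Your necessity sketch is broadly in line with the paper; for $(A_2)$ the paper uses a projector-perturbation argument rather than a Rouch\'e count (one shows $\sum_n\|P_{n,0}-\sum_{\lambda_j\in\Delta_n}P_{j,\tau}\|^2<\infty$ and invokes rank stability of nearby orthogonal projectors, Lemma~\ref{le.b1}), but your route via zero-multiplicities of $\det\varphi(1,\cdot,\tau)$ is a plausible alternative.

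The sufficiency proposal, however, mislocates the role of $(A_2)$ and thereby glosses over the actual crux. In the paper, $(A_1)$ and $(A_3)$ \emph{alone} suffice to establish that $H_\nu$ is a Hermitian accelerant: one proves $\mathcal I+\mathscr H_e>0$ and $\mathcal I+\mathscr H_o>0$ on $L_2(0,1)$ (the former is equivalent to $(A_3)$; the latter follows from it via a differentiation trick, Lemma~\ref{le.43}), and an even/odd decomposition of $L_2$ about $x=\tfrac12$ then gives $\mathcal I+\mathscr H>0$ for the convolution operator with kernel $H(x-t)$. Positivity of a self-adjoint operator is inherited by compressions, so invertibility on every subinterval $(0,a]$ is automatic---there is no ``propagation'' to carry out, and $(A_2)$ plays no part in this step. (Two smaller points: the accelerant condition concerns the convolution kernel $H(x-t)$, not the kernel $\tfrac12[H(s-t)+H(s+t)]$ you wrote; and $\tau$ is recovered as $\tau(x)=-R_H(x,0)$, not the diagonal value $-R_H(x,x)$.)

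Where $(A_2)$ genuinely enters is the step you dismiss as ``direct verification'' that the spectral data of $S_\tau$ equal~$\mathfrak a$---and this is not direct. Combining the factorization identity of Proposition~\ref{pr.37} with~\eref{eq.43} yields $\sum_{j\ge0}\widehat P_j=\mathcal I$ for $\widehat P_j:=\Psi_\tau(\lambda_j)\alpha_j\Psi_\tau^*(\lambda_j)$, but a~priori the $\widehat P_j$ are only nonnegative finite-rank operators summing to the identity, not projectors. The rank condition $(A_2)$ is exactly what permits a codimension count (Lemmas~\ref{le.b2} and~\ref{le.b3}) forcing the $\widehat P_j$ to be pairwise orthogonal projectors; without $(A_2)$ this step fails, and the ``forbidden subspaces'' phenomenon is precisely this obstruction. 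Only once orthogonality is secured does a Riemann--Lebesgue limiting argument give $\varphi(1,\lambda_j,\tau)\alpha_j=0$, hence $\Ran\widehat P_j\subset\ker(S_\tau-\lambda_j^2\mathcal I)$, from which $\mathfrak a=\mathfrak a_\tau$ follows.
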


\begin{theorem}\label{th.15}
In order that a sequence $\mathfrak
b=((\lambda_j,\alpha_j))_{j\in\mathbb N}$ should belong to the
class~$\mathfrak B$ it is necessary and sufficient that the
conditions $(A_1)$, $(A_2)$, and the following condition~$(A_4)$
be satisfied:
\begin{itemize}
        \item[$(A_4)$] the system of functions
                             $\{d\sin \lambda_j x\mid j\in\mathbb{N}, d\in\Ran\alpha_j\}$
                       is complete in the space~$L_2$.
\end{itemize}
\end{theorem}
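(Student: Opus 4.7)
The plan is to deduce Theorem~\ref{th.15} from Theorem~\ref{th.14} by relating the class $\mathfrak{B}$ to $\mathfrak{A}$ through adjunction of an auxiliary norming constant at zero. Specifically, I will argue that $\mathfrak{b}=((\lambda_j,\alpha_j))_{j\in\mathbb{N}}$ belongs to $\mathfrak{B}$ if and only if the augmented sequence $\mathfrak{a}:=((0,I),(\lambda_j,\alpha_j)_{j\in\mathbb{N}})$ belongs to $\mathfrak{A}$. Because the intervals $\Delta_n$ with $n\geq 1$ lie strictly to the right of the origin, conditions $(A_1)$ and $(A_2)$ depend only on indices $j$ with $\lambda_j>0$ and therefore coincide for $\mathfrak{a}$ and $\mathfrak{b}$. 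The task thus reduces to showing that, in the presence of $(A_1)$ and $(A_2)$, condition $(A_4)$ for $\mathfrak{b}$ is equivalent to $(A_3)$ for $\mathfrak{a}$ with the particular choice $\alpha_0=I$.

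For the \emph{necessity} direction, suppose $\mathfrak{b}=\mathfrak{b}_q$ with $q=b(\tau)$, $\tau\in\Re\mathbf{L}_2$. Conditions $(A_1)$ and $(A_2)$ are inherited from $\mathfrak{a}_\tau$, which satisfies them by Theorem~\ref{th.14}. For $(A_4)$ I will use that $T_q$ is self-adjoint with compact resolvent on $L_2$, so that its eigenfunctions form a complete system, and that (by preparatory material of the paper) the eigenspace at $\lambda_j^2(\tau)$ coincides with $\{\varphi(\cdot,\lambda_j,\tau)d\mid d\in\Ran\alpha_j(\tau)\}$. The representation~\eref{eq.13} exhibits the transmutation $(Vf)(x):=f(x)+\int_0^x K_{\tau,D}(x,t)f(t)\,\rmd t$ as a bounded linear map on $L_2$ sending $d\sin\lambda_j x$ to $\varphi(\cdot,\lambda_j,\tau)d$; since $V-I$ is a Volterra integral operator with an $L_2$ kernel, $V$ is boundedly invertible on $L_2$, so completeness transfers back to the sine system, yielding~$(A_4)$.

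For \emph{sufficiency}, the crux is to show that $(A_4)$ for $\mathfrak{b}$ implies $(A_3)$ for $\mathfrak{a}$ with $\alpha_0:=I$. Take $f\in L_2$ orthogonal to every constant vector and to every $d\cos\lambda_j x$ with $j\geq 1$ and $d\in\Ran\alpha_j$. Orthogonality to constants gives $\int_0^1 f(t)\,\rmd t=0$, so that $F(x):=\int_0^x f(t)\,\rmd t$ satisfies $F(0)=F(1)=0$. Integration by parts then yields, for each admissible pair $(j,d)$,
\begin{equation*}
0=\int_0^1\langle f(x),d\rangle\cos\lambda_j x\,\rmd x=\lambda_j\int_0^1\langle F(x),d\rangle\sin\lambda_j x\,\rmd x,
\end{equation*}
the boundary terms vanishing thanks to $F(0)=F(1)=0$. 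Since $\lambda_j>0$ for $j\geq 1$, this forces $F\perp d\sin\lambda_j x$ for all admissible $(j,d)$, and $(A_4)$ then gives $F\equiv 0$, whence $f=F'=0$. Thus $\mathfrak{a}\in\mathfrak{A}$ by Theorem~\ref{th.14}, which in turn supplies $\tau\in\Re\mathbf{L}_2$ with $\mathfrak{a}_\tau=\mathfrak{a}$ and hence $\mathfrak{b}_{b(\tau)}=\mathfrak{b}$.

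The most delicate ingredient is the one used in the necessity direction: the identification of the eigenspace of $T_q$ at $\lambda_j^2(\tau)$ with the subspace spanned by $\{\varphi(\cdot,\lambda_j,\tau)d\mid d\in\Ran\alpha_j(\tau)\}$, together with the bounded invertibility of the transmutation $V$ on $L_2$. Both rely on preparatory material of the paper concerning the algebra~$G_2^+$ and the Cauchy problems~\eref{eq.11}--\eref{eq.12}; once these are in place, the remainder of the argument is a clean integration-by-parts manipulation followed by a direct appeal to Theorem~\ref{th.14}.
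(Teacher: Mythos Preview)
Your proposal is correct and follows the same overall architecture as the paper: necessity via eigenfunction completeness of~$T_q$ transferred through the transmutation~$\mathcal I+\mathscr K_{\tau,D}$ (this is the paper's Lemma~\ref{le.213} combined with~\eref{eq.250}), and sufficiency by augmenting~$\mathfrak b$ with $(\lambda_0,\alpha_0)=(0,I)$, establishing~$(A_3)$ for the augmented sequence, and then invoking Theorem~\ref{th.14} (equivalently, the reconstruction Theorem~\ref{th.44}).

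The one genuine difference is your proof of the implication $(A_4)\Rightarrow(A_3)$ for the augmented sequence. The paper's Lemma~\ref{le.43} argues on the operator side: it uses the identity $\mathcal I+\mathscr H_o=\mathscr U^o_{\mathfrak a,0}$ from Lemma~\ref{le.42}, the Fredholm alternative to get $\Ran(\mathcal I+\mathscr H_o)=L_2$, and then computes $J(\mathcal I+\mathscr H_o)f$ as a series in $\Psi_0(\lambda_j)$ to show the cosine system has dense span. Your argument is the elementary dual: take $f$ in the orthogonal complement, integrate to~$F$ with $F(0)=F(1)=0$, and integrate by parts to land in the orthogonal complement of the sine system, which is trivial by~$(A_4)$. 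Your route is shorter, uses neither~$(A_1)$ nor the accelerant machinery, and avoids the operator identities~\eref{eq.43} altogether; the paper's route has the advantage of being symmetric with its proof of the converse implication $(A_3)\Rightarrow(A_4)$, which does seem to require~$(A_1)$ and the operator framework.
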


There exists a simple relation between the classes~$\mathfrak A$ and~$\mathfrak B$.

\begin{proposition}\label{pr.16}
Let $\mathfrak a=((\lambda_j,\alpha_j))_{j\in\mathbb Z_+}$ and
$\mathfrak b=((\lambda_j,\alpha_j))_{j\in\mathbb N}$. Then
  \begin{equation*}
         (\mathfrak a\in\mathfrak A) \Longleftrightarrow (\mathfrak b\in\mathfrak B) \wedge (\lambda_0=0, \alpha_0>0).
  \end{equation*}
\end{proposition}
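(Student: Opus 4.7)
The plan is to address the two implications separately, working directly from the operator-theoretic definitions of $\mathfrak{A}$ and $\mathfrak{B}$ in Section~1.2 (rather than from the spectral conditions $(A_1)$--$(A_4)$, although the latter route is also viable once Theorems~\ref{th.14} and~\ref{th.15} are at hand).

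For the forward direction, suppose $\mathfrak{a} = \mathfrak{a}_\tau \in \mathfrak{A}$ for some $\tau \in \Re\mathbf{L}_2$, and set $q := b(\tau) \in \Re\mathfrak{M}$. Then $\mathfrak{b} = \mathfrak{b}_q \in \mathfrak{B}$ by definition of $\mathfrak{B}$, and $\lambda_0 = 0$ by the labelling convention. The positivity $\alpha_0 > 0$ is a matter of identifying $\ker S_\tau$: writing $\eta := \bigl(d/dx + \tau\bigr)f$ in the equation $\mathfrak{t}_{-\tau}(f) = 0$ yields $\bigl(d/dx - \tau\bigr)\eta = 0$ together with the Neumann-type endpoint conditions $\eta(0) = \eta(1) = 0$, which force $\eta \equiv 0$. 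Hence $f$ solves the first-order equation $(d/dx + \tau)f = 0$, whose $r$-dimensional solution space sits in $\mathbf{W}_2^1 \cap \mathbf{L}_\infty$ because $\tau \in \mathbf{L}_2$. Since $\rank\alpha_0(\tau)$ equals the multiplicity of $0$ as an eigenvalue, we get $\rank\alpha_0(\tau) = r$ and so $\alpha_0 > 0$.

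For the backward direction, given $\mathfrak{b} \in \mathfrak{B}$ together with $\lambda_0 = 0$ and $\alpha_0 > 0$, pick $q \in \Re\mathfrak{M}$ with $\mathfrak{b}_q = \mathfrak{b}$ and any representative $\tau_0 \in b^{-1}(q) \cap \Re\mathbf{L}_2$. By the forward part applied to $\tau_0$ one has $\mathfrak{a}_{\tau_0} \in \mathfrak{A}$, and $\mathfrak{a}_{\tau_0}$ matches $\mathfrak{a}$ except possibly in its zeroth entry, where the prescribed $\alpha_0$ need not equal $\alpha_0(\tau_0)$. The argument is closed by selecting a different representative $\tau \in b^{-1}(q) \cap \Re\mathbf{L}_2$ realising $\alpha_0(\tau) = \alpha_0$, whence $\mathfrak{a}_\tau = \mathfrak{a}$ and $\mathfrak{a} \in \mathfrak{A}$.

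The hard part is establishing that $\tau \mapsto \alpha_0(\tau)$ surjects from the Miura fibre $b^{-1}(q) \cap \Re\mathbf{L}_2$ onto the positive-definite cone in $M_r^+$. A direct expansion of $m_\tau$ near $\lambda = 0$, combined with the identity $V_\tau^* V_{-\tau} \equiv I$ valid in the Hermitian case (where $V_\sigma$ denotes the fundamental matrix of $V' = -\sigma V$, $V(0) = I$), leads to the explicit formula $\alpha_0(\tau) = \frac{1}{2}\bigl(\int_0^1 V_\tau(t)^* V_\tau(t)\,dt\bigr)^{-1}$; the obstacle is then to show that as $\tau$ runs over the fibre, this positive-definite integral takes \emph{every} positive-definite value, reflecting the $r^2$-dimensional character of the Miura fibre. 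An alternative route, available once Theorems~\ref{th.14} and~\ref{th.15} are proved, is to reduce to the equivalence $(A_3) \Leftrightarrow (A_4)$ under $\lambda_0 = 0$, $\alpha_0 > 0$ and $(A_1)$--$(A_2)$: the direction $(A_4) \Rightarrow (A_3)$ is a short integration by parts on $H(x) := \int_0^x h(t)\,dt$, using $H(1) = 0$ (which follows from the $j=0$ cosine orthogonality because $\alpha_0 > 0$) to transfer the orthogonality statement; the reverse direction is more subtle and relies on the Riesz basis stability built into $(A_1)$--$(A_2)$.
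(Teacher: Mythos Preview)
Your forward direction is correct and essentially coincides with the paper's: the paper invokes Theorem~\ref{th.24}(iv) for $\lambda_0=0$ and $\alpha_0>0$, while you reprove the $\alpha_0>0$ part by directly computing $\ker S_\tau$; both amount to the observation that $\ker S_\tau=\Ran\Psi_\tau(0)$ is $r$-dimensional.

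Your primary route for the backward direction, however, has a genuine gap that you yourself flag: the surjectivity of $\tau\mapsto\alpha_0(\tau)$ from the Miura fibre $b^{-1}(q)\cap\Re\mathbf L_2$ onto the positive-definite cone is not established. Your formula $\alpha_0(\tau)=\tfrac12\bigl(\int_0^1 V_\tau^*V_\tau\bigr)^{-1}$ is correct, but turning this into a surjectivity statement means parametrising the solutions of the matrix Riccati equation $g'+g\tau_0+\tau_0 g+g^2=0$ that stay in $\Re\mathbf L_2$ on $[0,1]$ and showing the resulting Gram integrals exhaust all positive matrices. This is true a posteriori (it follows from the paper's results) but is not elementary, and you give no argument for it. As it stands your ``proof'' of the backward implication is incomplete.

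Your alternative route is exactly the one the paper takes: Proposition~\ref{pr.16} is deduced from Theorems~\ref{th.14}, \ref{th.15} and Lemma~\ref{le.43}, the latter being precisely the equivalence $(A_3)\Longleftrightarrow[(A_4)\wedge(\lambda_0=0,\alpha_0>0)]$ under $(A_1)$. It is worth noting that your integration-by-parts sketch for $[(A_4)\wedge(\lambda_0=0,\alpha_0>0)]\Rightarrow(A_3)$ is correct and is in fact more elementary than the paper's argument. The paper proves this implication via the operator identity $\mathcal I+\mathscr H_o=\mathscr U^o_{\mathfrak a,0}$, the compactness of $\mathscr H_o$, and the density of $\Ran J(\mathcal I+\mathscr H_o)$; your version---take $h$ orthogonal to all $d\cos\lambda_j x$, use $\alpha_0>0$ to force $\int_0^1 h=0$, set $H(x)=\int_0^x h$, integrate by parts to get $H\perp d\sin\lambda_j x$ for all $j\ge1$, conclude $H=0$ from $(A_4)$---achieves the same with bare hands. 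For the other implication $(A_3)\Rightarrow(A_4)$ (needed only if you also want to rederive the forward direction through the conditions $(A_j)$), your gesture towards ``Riesz basis stability'' is too vague; the paper's proof (Lemma~\ref{le.43}) uses Proposition~\ref{pr.38} to show $\mathcal I+\mathscr H_e$ is a bijection of $W_2^1$ and then differentiates the resulting cosine expansion termwise.
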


By definition, every $\mathfrak a \in \mathfrak A$ (resp., every $\mathfrak b \in \mathfrak B$) forms spectral data for an ope\-ra\-tor~$S_\tau$ with some $\tau \in \Re {\mathbf L}_2$ (resp., for an operator~$T_q$ with some $q\in \Re\mathfrak M$). It turns out that these spectral data determine the matrix-valued functions $\tau$ and $q$ uniquely, i.e., the following holds true.

\begin{theorem}\label{th.1uniq}
The mappings
  $\Re {\mathbf L}_2 \ni\tau \mapsto {\mathfrak a}_\tau \in {\mathfrak A}$ and
  $\Re {\mathfrak M} \ni q \mapsto {\mathfrak b}_q \in {\mathfrak B}$
are bijective.
\end{theorem}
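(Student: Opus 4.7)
The surjectivity of both maps is tautological, since $\mathfrak A$ and $\mathfrak B$ are by definition the images of $\tau\mapsto\mathfrak a_\tau$ and $q\mapsto\mathfrak b_q$. What the theorem really asserts is injectivity, the uniqueness half of the inverse spectral problem, and I would derive it as a consequence of the Krein accelerant reconstruction advertised in the abstract, applied separately in the two cases.

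For the map $\tau\mapsto\mathfrak a_\tau$, I would start from the representation
\[
  \psi(x,\lambda,\tau)=\cos\lambda x\cdot I+\int_0^x(\cos\lambda t)\,K_{\tau,N}(x,t)\,\rmd t
\]
in~\eref{eq.13}, substitute the vector-valued eigenfunctions $\psi(\cdot,\lambda_j,\tau)d$ (with $d\in\Ran\alpha_j$) into the Parseval identity associated with the self-adjoint operator $S_\tau$ and its spectral measure $\nu_\tau$, and convert the resulting identity into a Krein-type integral equation
\[
  K_{\tau,N}(x,t)+F_\tau(x,t)+\int_0^x K_{\tau,N}(x,s)\,F_\tau(s,t)\,\rmd s=0,\qquad 0\le t\le x,
\]
whose accelerant $F_\tau$ is essentially the cosine transform of $\nu_\tau-\nu_0$ and hence depends only on $\mathfrak a_\tau$. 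The next step is to show that for every $x\in(0,1]$ this equation has a unique solution $K_{\tau,N}(x,\cdot)\in L_2((0,x),M_r)$. Here the non-negativity of $S_\tau$ is decisive, since it will translate into bounded invertibility of $I+F_\tau$ on $L_2((0,x),M_r)$, while condition $(A_3)$ ensures that the spectral measure carries enough information to pin down the whole kernel. Finally $\tau$ is read off from $K_{\tau,N}$ by setting $\lambda=0$ in~\eref{eq.13} and in the identity $(\rmd/\rmd x+\tau)\psi=-\lambda\varphi$: this identifies $\psi(\cdot,0,\tau)$ with $I+\int_0^{\cdot}K_{\tau,N}(\cdot,t)\,\rmd t$ and shows that it satisfies the first-order equation $\psi_x+\tau\psi=0$, from which $\tau$ is recovered.

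For the map $q\mapsto\mathfrak b_q$ I would run the symmetric argument with $\varphi(\cdot,\lambda,\tau)$, the sine-type kernel $K_{\tau,D}$, the measure $\mu_q$, and condition $(A_4)$ in place of $\psi$, $K_{\tau,N}$, $\nu_\tau$ and $(A_3)$. The key observation is that, by~\eref{eq.11a}, the operator $\mathfrak t_{\tau,D}$ and hence the kernel $K_{\tau,D}$ depend only on $q=b(\tau)$; the Krein equation so obtained is therefore an intrinsic equation for $q$, and its unique solvability yields $q$ directly from $\mathfrak b_q$, independently of the representative $\tau\in b^{-1}(q)$ chosen to set it up.

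The main obstacle I foresee is the unique solvability of the Krein equation in the matrix-valued $\mathbf W_2^{-1}$ setting. The accelerant $F_\tau$ is markedly less regular than in the classical scalar Gelfand--Levitan or Krein frameworks, so the invertibility of $I+F_\tau$ cannot simply be quoted; it has to be established by a factorization argument combining the self-adjointness and non-negativity of $S_\tau$ with the non-degeneracy of the norming constants encoded in $(A_1)$--$(A_2)$. I expect this analysis, rather than the formal derivation of the Krein equation itself, to carry the bulk of the technical work.
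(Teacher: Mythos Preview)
Your proposal is correct and is, at the level of ideas, the same argument the paper gives: the spectral data determine the kernel~$H_e$ (your~$F_\tau$), the Gel'fand--Levitan type equation
\[
  K_{\tau,N}(x,t)+H_e(x,t)+\int_0^xK_{\tau,N}(x,s)H_e(s,t)\,\rmd s=0
\]
has at most one solution, and $K_{\tau,N}$ determines~$\tau$ (Lemma~\ref{le.22}\emph{(iii)}). The paper merely packages this more compactly at the operator level: from the resolution of the identity one has $\mathscr U^e_{\mathfrak a,\tau}=\mathcal I$, and together with the transformation formula $\mathscr U^e_{\mathfrak a,\tau}=(\mathcal I+\mathscr K_{\tau,N})\mathscr U^e_{\mathfrak a,0}(\mathcal I+\mathscr K_{\tau,N}^*)$ this says that $(\mathcal I+\mathscr K_{\tau,N})^{-1}(\mathcal I+\mathscr K_{\tau,N}^*)^{-1}$ is a factorization of~$\mathscr U^e_{\mathfrak a,0}$ along the chain~$\{\chi_a\}$; uniqueness of such factorizations (Theorem~\ref{th.a1}) then forces $\mathscr K_{\tau_1,N}=\mathscr K_{\tau,N}$. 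The odd case with~$K_{\tau,D}$ and~$\mathscr U^o$ is identical and yields $b(\tau_1)=b(\tau)$ via Lemma~\ref{le.22}\emph{(ii)}.

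One remark on your anticipated ``main obstacle'': in the \emph{uniqueness} direction the invertibility of $\mathcal I+\mathscr H_e$ (equivalently, unique solvability of your Krein equation) costs nothing. Since a~$\tau$ with $\mathscr U^e_{\mathfrak a,\tau}=\mathcal I$ is assumed to exist, the identity above already exhibits $\mathscr U^e_{\mathfrak a,0}=(\mathcal I+\mathscr K_{\tau,N})^{-1}(\mathcal I+\mathscr K_{\tau,N}^*)^{-1}$ as a product of invertible Volterra-type factors, hence positive and boundedly invertible; no separate appeal to $(A_1)$--$(A_2)$ is needed. The factorization analysis you describe is where the work goes in the \emph{sufficiency} direction (Theorem~\ref{th.44}), not here.
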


As we mentioned earlier, we base our algorithm of reconstruction of the functions~$\tau$ and $q$ on the Krein accelerant method developed in~\cite{Kr}.

\begin{definition}\label{de.11}
We say that a function~$H\in L_2((-1,1),M_r)$ is the \emph{accelerant} if
it is even $($i.e., $H(-x)=H(x))$ and if for every $a\in [0,1]$
the integral equation
\begin{equation*}
               f(x)+\int_0^a H(x-t) f(t)\,\rmd t =0, \qquad  x\in (0,1),
\end{equation*}
has no non-trivial solutions in the space $L_2$. The set of all
accelerants is denoted by~$\mathfrak H_2$ and is endowed
with the metric of the space~$L_2((-1,1),M_r)$. We shall write~$\Re\mathfrak H_2$ for the subset of~$\mathfrak H_2$ of all Hermitian accelerants.
\end{definition}

Spectral measure of the operator~$S_\tau$ naturally generate Krein  accelerants, as explained in the following theorem.

\begin{theorem}\label{th.41}
Take a sequence $\mathfrak a=((\lambda_j,\alpha_j))_{j\in\mathbb
Z_+}$ satisfying condition $(A_1)$ and set $\nu:=\nu^\mathfrak a$. Then the
limit
\begin{equation}\label{eq.18}
         H_\nu(x):=\lim\limits_{n\to\infty}\int_0^{\pi(n+1/2)}2\cos(2\lambda x)\,\rmd (\nu-\nu_0)(\lambda), \qquad
                          x\in (-1,1),
\end{equation}
exists in the topology of the space $L_2((-1,1),M_r)$. If, in addition, condition $(A_3)$ holds, then the function $H_\nu$ belongs to~$\Re\mathfrak H_2$.
\end{theorem}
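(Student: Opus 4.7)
The theorem has two independent components: the $L_2$-existence of the kernel $H_\nu$ under $(A_1)$, and its being an accelerant under the additional hypothesis $(A_3)$. I would attack them separately, basing the convergence argument on block decompositions indexed by the intervals $\Delta_n$, and the accelerant verification on a Parseval-type quadratic identity.

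For $L_2$-existence, partition $[0,\infty)$ by $\{\Delta_n\}$ and write the partial sum as $\sum_{n=0}^N g_n$, where for $n\ge1$
\[
g_n(x) := \sum_{\lambda_j\in\Delta_n}2\cos(2\lambda_jx)\alpha_j - 2\cos(2\pi nx)\,I.
\]
Split $g_n = g_n^{(1)} + g_n^{(2)}$ with $g_n^{(1)}(x) := 2\cos(2\pi nx)\,A_n$, $A_n := \sum_{\lambda_j\in\Delta_n}\alpha_j - I$, and $g_n^{(2)}$ the oscillatory remainder. The series $\sum g_n^{(1)}$ converges in $L_2((-1,1),M_r)$ directly by Parseval, using mutual $L_2$-orthogonality of $\{\cos(2\pi nx)\}_n$ and the bound $\sum\|A_n\|^2 < \infty$ from the third clause of $(A_1)$. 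For $g_n^{(2)}$, the identity $\cos A-\cos B = -2\sin\frac{A+B}{2}\sin\frac{A-B}{2}$ together with $|\sin((\lambda_j-\pi n)x)|\le|\lambda_j-\pi n|$ on $[-1,1]$ and the uniform cardinality bound from $(A_1)$ gives $\|g_n^{(2)}\|_{L_2}^2 \lesssim \sum_{\lambda_j\in\Delta_n}(\lambda_j-\pi n)^2$, and hence $\sum_n\|g_n^{(2)}\|_{L_2}^2 < \infty$ via the first clause of $(A_1)$. To upgrade square-summability to $L_2$-convergence, I would exploit quasi-orthogonality: $g_n^{(2)}$ is a trigonometric polynomial whose exponential frequencies are confined to $\pm[2\pi n-\pi,\,2\pi n+\pi]$, so its expansion in the ONB $\{e^{i\pi kx}\}$ of $L_2(-1,1)$ is concentrated near $k=\pm 2n$ with $O(1/|k\mp 2n|)$ tails. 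A Schur-type estimate on the cross correlations then yields $\bigl\|\sum_{n=M}^N g_n^{(2)}\bigr\|_{L_2}^2 \le C\sum_{n=M}^N\|g_n^{(2)}\|_{L_2}^2$, making the partial sums Cauchy.

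For the accelerant property, evenness and Hermiticity are immediate from the cosine expansion and $\alpha_j^*=\alpha_j$. For injectivity, suppose $f\in L_2((0,1),\mathbb C^r)$ is supported in $(0,a)$ and satisfies $f+\int_0^a H_\nu(\cdot-t)\,f(t)\,dt = 0$. Pair with $f$ in $L_2(0,1)$; setting $F_c(\lambda):=\int_0^a f(t)\cos(2\lambda t)\,dt$ and $F_s(\lambda):=\int_0^a f(t)\sin(2\lambda t)\,dt$ and using $\cos(2\lambda(x-t))=\cos(2\lambda x)\cos(2\lambda t)+\sin(2\lambda x)\sin(2\lambda t)$, a direct calculation gives
\[
\langle K_af,f\rangle = 2\sum_j\bigl[F_c(\lambda_j)^*\alpha_j F_c(\lambda_j) + F_s(\lambda_j)^*\alpha_j F_s(\lambda_j)\bigr] - \|f\|_{L_2(0,1)}^2,
\]
where the subtracted norm is the $\nu_0$-contribution, obtained by Parseval on the Fourier ONB $\{1,\sqrt{2}\cos(2\pi nx),\sqrt{2}\sin(2\pi nx)\}$ of $L_2(0,1)$. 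Thus $\|f\|^2+\langle K_af,f\rangle=0$ reduces to $\sum_j[F_c^*\alpha_jF_c+F_s^*\alpha_jF_s]=0$, and nonnegativity of each summand forces $\alpha_j^{1/2}F_c(\lambda_j) = \alpha_j^{1/2}F_s(\lambda_j)=0$ for every $j$. Condition $(A_3)$ then yields $f=0$ through the link between completeness of $\{d\cos(\lambda_jx)\}$ on $L_2((0,1),\mathbb C^r)$ and completeness of the associated doubled-frequency exponential system $\{d\,e^{\pm 2i\lambda_j t}\}$, established by a standard symmetry/extension argument.

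The principal obstacle is the quasi-orthogonality estimate in the first part: the Fourier tails of the blocks $g_n^{(2)}$ decay only like $1/|k|$, so different $n$-blocks have only approximately disjoint supports, and the Bessel bound must be proved via Schur estimates combining both $(A_1)$ inputs. A subtler point in the second part is reconciling the factor-of-two mismatch between the cosines $\cos(2\lambda_j\cdot)$ produced by the kernel and the cosines $\cos(\lambda_j\cdot)$ appearing in $(A_3)$; this is handled by the precise interplay between the interval $(0,1)$, the reference measure $\nu_0$, and the completeness hypothesis $(A_3)$.
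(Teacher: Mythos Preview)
Your Part~1 strategy is workable but harder than necessary. The quasi-orthogonality/Schur step you flag as the ``principal obstacle'' disappears if you take one more term in the Taylor expansion: write
\[
\cos(2\lambda_j x)=\cos(2\pi nx)-2\tilde\lambda_j\,x\sin(2\pi nx)-\vartheta_{n,j}(x),\qquad \tilde\lambda_j:=\lambda_j-\pi n,
\]
where $\sup_{|x|\le1}|\vartheta_{n,j}(x)|\le 6|\tilde\lambda_j|^2$. Then $g_n$ splits into (i) $\cos(2\pi nx)\beta_n$ with $\beta_n=I-\sum_{\lambda_j\in\Delta_n}\alpha_j$, an orthogonal series with $\ell^2$ coefficients; (ii) $2x\sin(2\pi nx)\gamma_n$ with $\gamma_n=\sum_{\lambda_j\in\Delta_n}\tilde\lambda_j\alpha_j$, again orthogonal in~$n$ with $\gamma_n\in\ell^2$ by Cauchy--Schwarz and~$(A_1)$; and (iii) a remainder $\sum_{\lambda_j\in\Delta_n}\vartheta_{n,j}\alpha_j$ which is \emph{absolutely} summable over~$n$ since $\sum_j|\tilde\lambda_j|^2<\infty$. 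No almost-orthogonality bound is needed at all.

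Part~2 has a genuine gap. Your Parseval identity is correct and yields $\alpha_j F_c(\lambda_j)=\alpha_j F_s(\lambda_j)=0$ for all~$j$. After the shift $s=2t-1$, this says that $g(s):=f((s+1)/2)$ satisfies $\alpha_j\int_{-1}^1\cos(\lambda_j s)\,g(s)\,ds=0$ and $\alpha_j\int_{-1}^1\sin(\lambda_j s)\,g(s)\,ds=0$; decomposing $g=g_e+g_o$ into even and odd parts on $(-1,1)$, condition~$(A_3)$ kills~$g_e$, but to kill~$g_o$ you need precisely~$(A_4)$---completeness of $\{d\sin(\lambda_j x):d\in\Ran\alpha_j\}$---which is not among the hypotheses. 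The implication $(A_1)\wedge(A_3)\Rightarrow(A_4)$ is true but is not a ``standard symmetry/extension argument'': in the paper it is a separate lemma whose proof identifies $\mathcal I+\mathscr H_e$ with the operator $\sum_j\Psi_0(\lambda_j)\alpha_j\Psi_0^*(\lambda_j)$, shows that $\mathcal I+\mathscr H_e$ is a bijection of~$W_2^1$ (using the intertwining relation $(\mathscr H_e f)'=\mathscr H_o f'-H_o(\cdot,1)f(1)$ and compactness of $\mathscr H_e$ on~$W_2^1$), and then differentiates the cosine expansion termwise to produce the sine system. Your sketch does not supply this step, and without it the factor-of-two mismatch you noted remains unresolved.
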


Conversely, every accelerant~$H$ determines a function in~${\mathbf L}_2$ in the following way. It is known (see~\ref{add.1}) that the associated Krein equation
\begin{equation}\label{eq.19}
         R(x,t)+H(x-t)+\int_0^x R(x,\xi)H(\xi-t)\,\rmd \xi=0, \qquad
         (x,t)\in\Omega,
\end{equation}
has a unique solution~$R_H$ in the class~$L_2(\Omega,M_r)$. We
can now define a mapping $\Theta$ from~$\mathfrak H_2$ to~$
\mathbf L_2$ given by the formula
\begin{equation}\label{eq.110}
         [\Theta(H)](x):=H(x)+\int_0^xR_H(x,\xi)H(\xi)\,\rmd \xi=-R_H(x,0).
\end{equation}

The functions~$\Theta(H)$ and $R_H$ are related to each other as follows:

\begin{theorem}\label{th.12}
Assume that $H\in\mathfrak H_2$, $R=R_H$, and $\tau=\Theta(H)$.
Then
\begin{eqnarray*}
         K_{\tau,D}(x,t)=\frac 12\left[R\Bigl(x,\frac{x+t}{2}\Bigr)-R\Bigl(x,\frac{x-t}{2}\Bigr)\right],\\
         K_{\tau,N}(x,t)=\frac{1}{2}\left[R\Bigl(x,\frac{x+t}{2}\Bigr)+R\Bigl(x,\frac{x-t}{2}\Bigr)\right],
\end{eqnarray*}
where~$(x,t)\in\Omega$ and $K_{\tau,D}$ and $K_{\tau,N}$ are the
kernels of~\eref{eq.13}.
\end{theorem}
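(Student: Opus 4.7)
The plan is to reduce the claimed identities to the uniqueness of the integral representations \eref{eq.13}. Writing $R := R_H$ and $\tau := \Theta(H)$, define candidate kernels $\widetilde K_D, \widetilde K_N$ on $\Omega$ by the right-hand sides of the two formulas in the theorem. The change of variables $(x,t) \mapsto (x,(x\pm t)/2)$ has constant Jacobian $1/2$, so $\widetilde K_D, \widetilde K_N \in L_2(\Omega, M_r)$. Form the candidate solutions
$$\widetilde\varphi(x,\lambda) := \sin(\lambda x)\,I + \int_0^x \sin(\lambda t)\,\widetilde K_D(x,t)\,\rmd t, \qquad \widetilde\psi(x,\lambda) := \cos(\lambda x)\,I + \int_0^x \cos(\lambda t)\,\widetilde K_N(x,t)\,\rmd t.$$
Since Theorem~\ref{th.21} guarantees uniqueness of representations of type \eref{eq.13} within $L_2(\Omega, M_r)$, it suffices to show that $\widetilde\varphi(\cdot,\lambda)$ and $\widetilde\psi(\cdot,\lambda)$ solve the Cauchy problems \eref{eq.11}--\eref{eq.12} for this $\tau$ and every $\lambda \in \mathbb{C}$.

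I would first handle the smooth case $H \in C^\infty([-1,1], M_r)$, in which $R$ is a classical smooth solution of the Krein equation \eref{eq.19}. The initial conditions are then immediate: $\widetilde\varphi(0,\lambda) = 0$ and $\widetilde\psi(0,\lambda) = I$ come from the empty integrals; $\widetilde\varphi^{[1]}_\tau(0) = \lambda I$ follows at once from $\widetilde\varphi(0,\lambda) = 0$, and $\widetilde\psi^{[1]}_{-\tau}(0) = 0$ follows from $\widetilde K_N(0,0) = R(0,0) = -\tau(0)$. For the differential equations themselves I would differentiate the Krein equation \eref{eq.19} in $x$, substitute $y = (x\pm t)/2$, and rescale the inner integral by $\xi = (y+\eta)/2$; the resulting pair of Goursat-type identities for $\widetilde K_D$ and $\widetilde K_N$, together with the boundary values $\widetilde K_N(x,x) - \widetilde K_D(x,x) = R(x,0) = -\tau(x)$, are precisely what is needed so that $\widetilde\varphi$ and $\widetilde\psi$ satisfy the Sturm--Liouville equations. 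Substituting $\widetilde\varphi, \widetilde\psi$ into $-(\partial_x \pm \tau)(\partial_x \mp \tau) - \lambda^2$, integrating by parts in $t$ to convert powers of $\lambda$ into $\partial_t$-derivatives on the kernels, and using the Goursat identities, yields the desired cancellation.

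For general $H \in \mathfrak H_2$ I would argue by approximation: choose smooth even Hermitian accelerants $H_n \to H$ in $L_2((-1,1), M_r)$. A Neumann-series analysis of the Volterra equation \eref{eq.19} gives $R_{H_n} \to R$ in $L_2(\Omega, M_r)$, whence $\tau_n := \Theta(H_n) \to \tau$ in $\mathbf L_2$; continuous dependence of $K_{\tau,D}, K_{\tau,N}$ on $\tau$ furnished by Theorem~\ref{th.21} then transfers the smooth-case identities to the limit. The main obstacle is the bookkeeping in the verification step: one has to track and reconcile boundary contributions produced at $t = x$ (where $\partial_x$ hits the upper limit of the integral defining $\widetilde\varphi$) and at $\xi = x$ (where $\partial_x$ hits the Krein equation), and verify that they combine into precisely the $\tau$-coupling of the first-order factorisations $(\partial_x \mp \tau)(\partial_x \pm \tau)$. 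It is the symmetric/antisymmetric split between $\widetilde K_N$ and $\widetilde K_D$ that makes the boundary contributions from $R(x,(x+t)/2)$ and $R(x,(x-t)/2)$ combine in the right way, delivering both identities simultaneously.
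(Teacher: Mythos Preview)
Your overall strategy---verify the identities for smooth accelerants by checking that the candidate solutions satisfy the Cauchy problems \eref{eq.11}--\eref{eq.12}, then pass to general $H\in\mathfrak H_2$ by density and continuity---is exactly the paper's. Two points, however, deserve comment.

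First, the paper carries out the verification step more economically. Rather than plugging $\widetilde\varphi,\widetilde\psi$ into the second-order operator and chasing boundary terms, it rewrites the candidates via the substitution $s=(x-t)/2$ as
\[
\widetilde\varphi(x,\lambda)=\sin\lambda x\,I+\int_0^x\sin\lambda(x-2s)\,R(x,x-s)\,\rmd s,\qquad
\widetilde\psi(x,\lambda)=\cos\lambda x\,I+\int_0^x\cos\lambda(x-2s)\,R(x,x-s)\,\rmd s,
\]
and then proves the single identity
\[
\frac{\partial}{\partial x}\bigl[R(x,x-t)\bigr]=R(x,0)\,R(x,t)=-\tau(x)\,R(x,t),
\]
obtained by differentiating the Krein equation (written with argument $x-t$) in~$x$ and invoking uniqueness for the resulting linear integral equation. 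With this identity in hand, a direct differentiation of the displayed formulas yields the \emph{first-order} relations $(\tfrac{\rmd}{\rmd x}-\tau)\widetilde\varphi=\lambda\widetilde\psi$ and $(\tfrac{\rmd}{\rmd x}+\tau)\widetilde\psi=-\lambda\widetilde\varphi$, from which the second-order equations and the initial conditions follow immediately. This sidesteps the Goursat bookkeeping you anticipate.

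Second, your approximation step contains a misstatement: the Krein equation~\eref{eq.19} is \emph{not} Volterra in the unknown $R(x,\cdot)$ for fixed~$x$ (the kernel $H(\xi-t)$ is full Fredholm on $[0,x]$), so a Neumann-series argument does not establish $R_{H_n}\to R_H$. The paper obtains the needed continuous dependence differently: it expresses $R_H$ through the resolvent kernel $\Gamma_{a,H}$ of the invertible operator $\mathcal I+\mathscr H^a$ and uses that $(a,H)\mapsto\Gamma_{a,H}$ is continuous on $[0,1]\times\mathfrak H_2$. This is what makes the passage to the limit legitimate.
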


Theorem~\ref{th.12} is an analogue of a theorem from the paper of Krein~\cite{Kr}, where the accelerant theory was applied to the inverse scattering theory. Accelerants were used to solve  the inverse spectral problems on finite intervals in the papers~\cite{AHrMy1} and~\cite{AHrMy2}.

The following theorem describes some additional properties of the mapping~$\Theta$.

\begin{theorem}\label{th.13}
The mapping~$\Theta$ is a homeomorphism between the metric
spaces~$\mathfrak H_2$ and~$\mathbf L_2$. Moreover, if
$H\in\mathfrak H_2$, then $H^*\in\mathfrak H_2$ and
$\Theta(H^*)=[\Theta(H)]^*$.
\end{theorem}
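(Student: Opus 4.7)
The plan is to establish four parts separately: well-definedness of $\Theta$ as a map into $\mathbf L_2$, its bijectivity, its bicontinuity, and the Hermitian identity. Throughout, I would exploit the explicit relation between $R_H$ and the Sturm--Liouville kernels $K_{\tau,D}, K_{\tau,N}$ supplied by Theorem~\ref{th.12}.

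For bijectivity, Theorem~\ref{th.12} essentially provides the inverse. Given $\tau\in\mathbf L_2$, the kernels $K_{\tau,D}$ and $K_{\tau,N}$ in the representations~\eref{eq.13} exist and are uniquely determined (Theorem~\ref{th.21}). Inverting the formulas of Theorem~\ref{th.12} yields
\[
R(x,(x+t)/2)=K_{\tau,N}(x,t)+K_{\tau,D}(x,t),\qquad
R(x,(x-t)/2)=K_{\tau,N}(x,t)-K_{\tau,D}(x,t),
\]
and as $t$ ranges over $[0,x]$ these two formulas together determine a candidate $R\in L_2(\Omega,M_r)$ throughout $\Omega$. A candidate $H$ is then extracted from $R$ by solving the Volterra equation obtained on setting $t=0$ in~\eref{eq.19}. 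The substantive verifications are that this $H$ is even, that it lies in $\mathfrak H_2$, and that the full Krein equation~\eref{eq.19} (not merely its $t=0$ trace) is satisfied; I expect evenness to follow from an intrinsic symmetry of the construction, and the accelerant property from a perturbation argument relative to the trivial case $\tau=0,\ H=0$ combined with the invertibility of $I+\cL_x^{(H)}$. Injectivity is immediate from uniqueness of the Volterra solution.

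Bicontinuity reduces to continuous dependence of solutions of linear integral equations on their coefficients: $H\mapsto R_H$ is continuous $\mathfrak H_2\to L_2(\Omega,M_r)$ because $I+\cL_x^{(H)}$ on $L_2((0,x),M_r)$, defined by $\cL_x^{(H)}g(t)=\int_0^x g(\xi)H(\xi-t)\,\rmd\xi$, is invertible uniformly on compact subsets of $\mathfrak H_2$, and composition with the boundary trace $R\mapsto -R(\cdot,0)$ takes values continuously in $\mathbf L_2$. The reverse continuity follows analogously from the continuous dependence of $K_{\tau,D/N}$ on $\tau\in\mathbf L_2$.

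The main obstacle will be the Hermitian identity. The preliminary fact that $H\in\mathfrak H_2\Leftrightarrow H^*\in\mathfrak H_2$ is clean: in the trace pairing $\langle f,g\rangle=\int_0^x\mathrm{tr}(g^* f)\,\rmd t$ on $L_2((0,x),M_r)$, the Hilbert-space adjoint of $\cL_x^{(H)}$ is precisely $\cL_x^{(H^*)}$, so $I+\cL_x^{(H)}$ is invertible iff $I+\cL_x^{(H^*)}$ is. The identity $\Theta(H^*)=\Theta(H)^*$ itself is more delicate because taking the matrix adjoint of the Krein equation~\eref{eq.19} for $H$ reverses the order of multiplication inside the integral, and hence does not produce the Krein equation for $H^*$. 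My expectation is that the cleanest route is again through Theorem~\ref{th.12}: identify $\Theta(H)$ and $\Theta(H^*)$ as the appropriate boundary traces of the Sturm--Liouville kernels, and verify that these transform correctly under $\tau\mapsto\tau^*$ by exploiting the symmetries of the Cauchy problems~\eref{eq.11}--\eref{eq.12}.
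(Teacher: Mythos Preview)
Your outline is reasonable in spirit, but the two places you flag as ``substantive verifications'' are exactly where the proof lives, and your sketch does not supply the missing ideas.

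\textbf{Surjectivity.} Having built a candidate $R$ from $K_{\tau,D},K_{\tau,N}$ and a candidate $H$ from the $t=0$ trace of the Krein equation, you must show that the \emph{full} Krein equation \eref{eq.19} holds for all $(x,t)\in\Omega$, not just at $t=0$. There is no a priori reason it should: the Krein equation is a two-variable identity and you have only imposed a one-variable constraint. A ``perturbation argument relative to $\tau=0$'' cannot close this gap, because the Krein equation is a pointwise identity, not an open condition. The paper takes a different route: it introduces the auxiliary integral equation
\[
P(x,x-s)+\tau(s)+\int_s^x\tau(u)P(u,s)\,\rmd u=0,
\]
shows directly (by a Neumann-type series) that it has a unique solution $P_\tau\in G_2^+$ depending continuously on $\tau$, and then verifies that $P_\tau$ satisfies the Krein equation by a Gronwall argument applied to the difference $F(x,t):=P_\tau(x,x-t)+H(t)+\int_0^x P_\tau(x,x-\xi)H(\xi-t)\,\rmd\xi$. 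Once the Krein equation holds, membership in $\mathfrak H_2$ follows at once from the factorization theorem (Proposition~\ref{th.a1a}), so no perturbation argument is needed. Your reconstructed $R$ would in fact coincide with $P_\tau$, but you have not exhibited the differential/integral identity (the analogue of \eref{eq.38}) that makes the verification go through.

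\textbf{The Hermitian identity.} Your proposed route through symmetries of the Cauchy problems \eref{eq.11}--\eref{eq.12} is not promising: taking matrix adjoints of those equations turns left multiplication by $\tau$ into right multiplication by $\tau^*$, so one does not obtain the Cauchy problem for $\tau^*$, and there is no clean relation between $K_{\tau,N}$ and $K_{\tau^*,N}$. The paper instead uses a structural property of the resolvent kernel that is specific to convolution operators: for continuous $H$ one has $\Gamma_{a,H}(x,t)=\Gamma_{a,H}(a-x,a-t)$ (Proposition~\ref{pr.33}). Writing $\Theta(H)$ explicitly as
\[
\Theta(H)(x)=H(x)-\int_0^x H(x-\xi)H(\xi)\,\rmd\xi-\int_0^x\!\!\int_0^x H(x-u)\Gamma_{x,H}(u,v)H(v)\,\rmd u\,\rmd v,
\]
taking adjoints, and applying this symmetry together with $[\Gamma_{a,H}(x,t)]^*=\Gamma_{a,H^*}(t,x)$ yields $[\Theta(H)]^*=\Theta(H^*)$ directly. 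This is the step where the difficulty you correctly identified (reversal of the order of multiplication) is actually resolved.
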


Finally, we show how the accelerants can be used to reconstruct $\tau$ and $q$ from the corresponding spectral data.
\begin{theorem}\label{th.17}
 \begin{enumerate}
     \item Assume that $\tau\in\Re{\mathbf L}_2$ and that~${\mathfrak a}:= {\mathfrak a}_\tau$ is the spectral data for the operator~$S_\tau$. Then $\tau = \Theta(H_\nu)$ for $\nu = \nu^{\mathfrak a}$.
     \item Assume that $q\in\Re\mathfrak M$ and that~${\mathfrak b}:= {\mathfrak b}_q$ is the spectral data for the operator~$T_q$. Set $\nu:=I\delta_0 + \mu^{\mathfrak b}$ and $\tau:= \Theta(H_\nu)$; then $q = b(\tau)$.
 \end{enumerate}
\end{theorem}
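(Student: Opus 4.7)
The plan is to reduce part~(2) to part~(1) via Proposition~\ref{pr.16} together with the injectivity half of Theorem~\ref{th.1uniq}, and to prove part~(1) by showing that the transformation kernels $K_{\tau,N}$ and $K_{\tau,D}$ from~\eref{eq.13} satisfy a Gelfand--Levitan system whose one-variable reformulation is precisely the Krein equation~\eref{eq.19} with convolution kernel $H_{\nu_\tau}$.

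For part~(1), fix $\tau\in\Re\mathbf L_2$ and set $\nu:=\nu_\tau$, $H:=H_\nu$. Since $\mathfrak a_\tau\in\mathfrak A$, conditions $(A_1)$ and $(A_3)$ of Theorem~\ref{th.14} hold, so Theorem~\ref{th.41} yields $H\in\Re\mathfrak H_2$, and then $\wt\tau:=\Theta(H)\in\Re\mathbf L_2$ by Theorem~\ref{th.13}; the task is $\wt\tau=\tau$. I would start from the matrix Parseval identity for the self-adjoint operator $S_\tau$,
\[
\int_0^1 g(x)^* f(x)\,\rmd x=\sum_{j=0}^\infty \widehat g(\lambda_j)^*\alpha_j\widehat f(\lambda_j),\qquad
\widehat f(\lambda):=\int_0^1\psi(x,\lambda,\tau)^* f(x)\,\rmd x,
\]
insert the representation of $\psi$ from~\eref{eq.13}, and subtract the corresponding identity for $S_0$. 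The product-to-sum formula $2\cos a\cos b=\cos(a-b)+\cos(a+b)$ together with definition~\eref{eq.18} of $H_\nu$ should let the resulting difference be telescoped into the Gelfand--Levitan equation
\[
K_{\tau,N}(x,t)+F(x,t)+\int_0^x K_{\tau,N}(x,s)F(s,t)\,\rmd s=0,\qquad 0<t<x<1,
\]
with $F(x,t)=\tfrac12\bigl(H_\nu(x+t)+H_\nu(x-t)\bigr)$. An analogous computation with $\varphi(\cdot,\lambda,\tau)$ in place of $\psi$ produces a companion equation for $K_{\tau,D}$ with kernel $\tfrac12\bigl(H_\nu(x-t)-H_\nu(x+t)\bigr)$.

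To pass to~\eref{eq.19} I would set
\[
R(x,s):=\cases{K_{\tau,N}(x,x-2s)-K_{\tau,D}(x,x-2s), & $0\le s\le x/2$,\\
K_{\tau,N}(x,2s-x)+K_{\tau,D}(x,2s-x), & $x/2\le s\le x$,}
\]
and verify that summing and subtracting the two Gelfand--Levitan equations under the substitution $s=(x\pm t)/2$ converts them into a single equation of the form~\eref{eq.19} for $R$ with convolution kernel $H$. The uniqueness of $R_H$ recalled before~\eref{eq.110} then forces $R=R_H$. Evaluating at $s=0$ and using the boundary relations $K_{\tau,N}(x,x)=-\tau(x)$ and $K_{\tau,D}(x,x)=0$, which follow by matching the coefficient of $\cos(\lambda x)$ in the identities $(\rmd/\rmd x+\tau)\psi=-\lambda\varphi$ and $(\rmd/\rmd x-\tau)\varphi=\lambda\psi$ stated after~\eref{eq.12}, one obtains $R_H(x,0)=-\tau(x)$, and formula~\eref{eq.110} delivers $\wt\tau=\tau$.

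For part~(2), let $\mathfrak b:=\mathfrak b_q\in\mathfrak B$. Proposition~\ref{pr.16} shows that the sequence $\mathfrak a$ obtained from $\mathfrak b$ by prepending $(\lambda_0,\alpha_0):=(0,I)$ belongs to $\mathfrak A$, and the surjectivity part of Theorem~\ref{th.1uniq} supplies $\tau_0\in\Re\mathbf L_2$ with $\nu_{\tau_0}=\nu$ and $\mathfrak b_{b(\tau_0)}=\mathfrak b_q$. The injectivity of $q\mapsto\mathfrak b_q$ in Theorem~\ref{th.1uniq} then forces $b(\tau_0)=q$, while part~(1) applied to $\tau_0$ gives $\tau_0=\Theta(H_{\nu_{\tau_0}})=\Theta(H_\nu)=\tau$; hence $q=b(\tau_0)=b(\tau)$. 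The principal obstacle is the first step of part~(1): identifying the telescoped Parseval difference with the kernel~\eref{eq.18} requires careful $L_2$-control of the partial sums of $\nu_\tau-\nu_0$, and the subsequent substitution $(x,t)\leftrightarrow(x,(x\pm t)/2)$ must transport the Volterra/Hermitian structure intact so that the uniqueness assertion for~\eref{eq.19} becomes applicable.
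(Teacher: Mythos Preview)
Your treatment of part~(ii) coincides with the paper's: augment $\mathfrak b$ to $\mathfrak a$ via Proposition~\ref{pr.16}, apply part~(i) to identify $\tau_0=\Theta(H_\nu)$, and use the injectivity in Theorem~\ref{th.1uniq} to conclude $q=b(\tau_0)$.

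For part~(i) you take a genuinely different route. The paper's argument is a two-line reduction: by Theorem~\ref{th.44} the function $\tau':=\Theta(H_{\nu^{\mathfrak a}})$ satisfies $\mathfrak a_{\tau'}=\mathfrak a=\mathfrak a_\tau$, and the injectivity of $\tau\mapsto\mathfrak a_\tau$ (Theorem~\ref{th.1uniq}) forces $\tau'=\tau$. Your approach instead rederives the Gelfand--Levitan equations for $K_{\tau,N}$ and $K_{\tau,D}$ from Parseval (this is essentially Lemma~\ref{le.42} combined with~\eref{eq.242} and~\eref{eq.250}, rewritten in kernel form as in~\eref{eq.333}), then inverts the substitution of Theorem~\ref{th.12} to recover the Krein equation for your $R$, and invokes uniqueness of $R_H$. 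The paper's route is shorter because Theorem~\ref{th.44} must be proved anyway for the sufficiency half of Theorem~\ref{th.14}; your route is more self-contained in that it avoids the orthogonal-projector machinery of Lemma~\ref{le.44}, but it reproves a special case of Proposition~\ref{pr.37} along the way.

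There is one concrete error and one regularity gap. The individual diagonal relations you claim, $K_{\tau,N}(x,x)=-\tau(x)$ and $K_{\tau,D}(x,x)=0$, are false in general: from Theorem~\ref{th.12} one has $K_{\tau,N}(x,x)=\tfrac12[R_H(x,x)+R_H(x,0)]$ and $K_{\tau,D}(x,x)=\tfrac12[R_H(x,x)-R_H(x,0)]$, and $R_H(x,x)$ has no reason to vanish. What your coefficient-matching actually yields (carry the integration by parts in $-\lambda\varphi$ through and isolate the $\cos\lambda x$ term) is only the \emph{difference} $K_{\tau,N}(x,x)-K_{\tau,D}(x,x)=-\tau(x)$, which is exactly $R(x,0)$ in your notation, so the conclusion survives once the statement is corrected. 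The regularity gap is that for $\tau\in\mathbf L_2$ the kernels $K_{\tau,N},K_{\tau,D}$ lie only in $G_2^+$, so their diagonal traces are not defined pointwise and the formal differentiation you invoke is not justified; you would need to run the argument first for $\tau\in C^1$ (where Lemma~\ref{le.34}(ii) applies) and then pass to the limit using the continuity of $\tau\mapsto K_{\tau,N}$, $\tau\mapsto K_{\tau,D}$ (Theorem~\ref{th.21}(iii)) and of $\Theta$ (Theorem~\ref{th.13}).
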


According to Theorem~\ref{th.17}, the reconstruction algorithm can proceed as follows. Given $\mathfrak a \in \mathfrak A$, we construct a measure~$\nu=\nu^\mathfrak a$ via~\eref{eq.111}, which defines an accelerant~$H=H_\nu$ by formula~\eref{eq.18}. Solving the Krein equation~\eref{eq.19}, we find the kernel~$R_H$, which gives $\tau:=\Theta(H)$ via~\eref{eq.110}. That $\tau$ so constructed is the matrix-function looked for follows from the fact that the matrix Sturm--Liouville operator~$S_\tau$ has the spectral data~$\mathfrak a$ we have started with. As in~\cite{Ramm}, we visualize the reconstruction algorithm by means of the diagram
\begin{equation*}\label{eq.120}
          \mathfrak a \overset{\eref{eq.111}}{\underset{s_1}\longrightarrow}
                  \nu^\mathfrak a=\nu \overset{\eref{eq.18}}{\underset{s_2}\longrightarrow}
           H_\nu=H\overset{\eref{eq.19}}{\underset{s_3}\longrightarrow}
                  R_H \overset{\eref{eq.110}}{\underset{s_4}\longrightarrow} \Theta(H)=\tau.
\end{equation*}
In this diagram, $s_j$ denotes the step number~$j$. Steps $s_1$,
$s_2$, and $s_4$ are trivial. The basic (and non-trivial) step is
$s_3$.

The procedure of reconstructing~$q$ looks similarly. Namely, given $\mathfrak b\in \mathfrak B$, the corresponding potential $q\in\Re\mathfrak M$ can be found following the steps in the next diagram:
\begin{equation*}
          \fl    \mathfrak b \overset{\eref{eq.111}}{\underset{s_1}\longrightarrow}
                      \mu^\mathfrak b {\underset{s_2}\longrightarrow}
                 I\delta_0 +\mu^\mathfrak b=\nu \overset{\eref{eq.18}}{\underset{s_3}\longrightarrow}
                       H_\nu=H \overset{\eref{eq.19}}{\underset{s_4}\longrightarrow}
                  R_H \overset{\eref{eq.110}}{\underset{s_5}\longrightarrow}
                       \Theta(H)=\tau {\underset{s_6}\longrightarrow}
                  b(\tau)=q.
\end{equation*}
We observe that the spectral data for the operator~$T_q$ with $q=b(\tau)$ do not determine~$\tau$; in fact, the Riccati equation $b(\tau)=q$ has many solutions, and by~\eref{eq.11a} each such solution generates the same operator~$T_q$. Also, the assumption that $q\in \Re{\mathfrak M}$ is not restrictive and was made only to simplify the presentation. Indeed, for every $q\in \Re {\mathbf W}_{2}^{-1}$ the corresponding Dirichlet Sturm--Liouville operator~$T_q$ of~\eref{eq.1a} is bounded below and thus becomes positive after adding a suitable multiple $cI$ of the unit matrix~$I$; it then follows that $q + cI$ belongs to~$\Re{\mathfrak M}$ (cf.~\cite{KPST}).

\subsection{The structure of the paper} The paper is organised as follows. Section~\ref{sec.2} deals with the direct spectral problem and consists of four parts. We establish basic properties of the operators $T_q$ and $S_\tau$ (Theorem~\ref{th.24}) in Subsection~\ref{subsec.21}, find the asymptotics of eigenvalues and norming constants
(Theorem~\ref{th.25}) in Subsection~\ref{subsec.22}, study properties of the Weyl--Titchmarsh function in Subsection~\ref{subsec.23}, and, finally, prove necessity of conditions of Theorems~\ref{th.14} and~\ref{th.15} in Subsection~\ref{subsec.24}.

In Section~\ref{sec.3} we study Krein accelerants. This section is divided in three parts. In the first two subsections we establish Theorems~\ref{th.12} and~\ref{th.13}, and in Subsection~\ref{subsec.33} we prove Propositions~\ref{pr.37} and~\ref{pr.38} on operators $\mathscr H_o$ and $\mathscr H_e$, which play an important role in Section~\ref{sec.4}.

The inverse spectral problem is considered in Section~\ref{sec.4}, which consists of three parts. In Subsection~\ref{subsec.41} we study properties of the function~$H_\nu$ (Theorem~\ref{th.41}), in Subsection~\ref{subsec.42} we finish the proof of Theorems~\ref{th.14} and~\ref{th.15}, and, finally, in Subsection~\ref{subsec.43} we establish Proposition~\ref{pr.16} and Theorems~\ref{th.1uniq} and~\ref{th.17}.

There are two short appendices. Some information on the spaces used in the paper and well-known facts from the theory of factorization of Fredholm operators are gathered in~\ref{add.1}. Three auxiliary lemmata on orthogonal projectors are proved in~\ref{add.2}.

\section{{The direct spectral problem}\label{sec.2}}

\subsection{{Basic properties of the operators $T_q$ and $S_\tau$}\label{subsec.21}}
 In this subsection we prove self--adjointness of
the operators $T_q$ and $S_\tau$ in the case where $\tau\in
\Re\mathbf L_2$ and $q\in \mathfrak M$ and also construct their
resolvents and the resolutions of identity.

Recall that we have denoted by $\varphi(\cdot,\lambda,\tau)$ and
$\psi(\cdot,\lambda,\tau)$  the matrix--valued solutions of the
Cauchy problems~\eref{eq.11} and~\eref{eq.12}. The results
of~\cite{MTr} imply the following statement.

\begin{theorem}\label{th.21}
The problems~\eref{eq.11} and~\eref{eq.12} have unique solutions
$\varphi(\cdot,\lambda,\tau)$ and $\psi(\cdot,\lambda,\tau)$ in
the
classes~$\{u\in \mathbf W_2^{1}\,|\,u_{\tau}^{[1]}\in \mathbf W_2^{1}\}$
    and $\{u\in \mathbf W_2^{1}\,|\,u_{-\tau}^{[1]}\in \mathbf W_2^{1}\}$,
respectively. Moreover, the following statements hold:
\begin{enumerate}
\item  the functions $\varphi(\cdot,\lambda,\tau)$ and
$\psi(\cdot,\lambda,\tau)$  are related to each other via
              \begin{equation}\label{eq.21}\fl
                  \left(\frac{\rmd}{\rmd x}-\tau\right)\varphi(\cdot,\lambda,\tau)=\lambda\psi(\cdot,\lambda,\tau),\qquad
                  \left(\frac{\rmd}{\rmd x}+\tau\right)\psi(\cdot,\lambda,\tau)=-\lambda \varphi(\cdot,\lambda,\tau);
               \end{equation}
\item  for every $\tau\in \mathbf L_2$ there exist unique matrix-valued functions
$K_{\tau,D}$ and $K_{\tau,N}$ belonging to the algebra~$G_2^+$
(see~\ref{add.1}) such that for any $\lambda\in \mathbb C$ and
$x\in [0,1]$
              \begin{equation}\label{eq.22}
                 \eqalign{ \varphi(x,\lambda,\tau)=\sin \lambda x I+\int_0^x(\sin\lambda t)K_{\tau,D}(x,t)\,\rmd t, \\
                 \eqalign   \psi(x,\lambda,\tau)=\cos \lambda x I+\int_0^x(\cos\lambda t)K_{\tau,N}(x,t)\,\rmd t; }
              \end{equation}
\item  the mappings $\mathbf L_2\ni\tau\mapsto K_{\tau,D}\in
G_2^+$ and $\mathbf L_2\ni \tau\to K_{\tau,N}\in G_2^+$ are
continuous.
\end{enumerate}
\end{theorem}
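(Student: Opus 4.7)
\smallskip\noindent\textbf{Proof plan.} My plan is to reduce the equation for $\varphi$ (respectively $\psi$) to a first-order $2r\times 2r$ system with $\mathbf L_2$ coefficients, invoke standard Carath\'eodory-type existence/uniqueness for such systems to obtain most of~(i), and then construct the transformation kernels $K_{\tau,D},K_{\tau,N}$ by a fixed-point iteration in the Banach algebra $G_2^+$; the continuity in~(iii) will emerge as a by-product of that iteration.

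For part~(i), I would recast $-(\frac{\rmd}{\rmd x}+\tau)(\frac{\rmd}{\rmd x}-\tau)\varphi=\lambda^2\varphi$ as the first-order system $\varphi'=\tau\varphi+\varphi_\tau^{[1]}$, $(\varphi_\tau^{[1]})'=-\lambda^2\varphi-\tau\varphi_\tau^{[1]}$ with initial datum $(\varphi(0),\varphi_\tau^{[1]}(0))=(0,\lambda I)$. Since $\tau\in\mathbf L_2\subset\mathbf L_1$, Picard iteration delivers a unique absolutely continuous solution, and the system itself then forces both components into $\mathbf W_2^1$. The analogous construction with $\tau$ replaced by $-\tau$ and initial datum $(I,0)$ produces $\psi$. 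To derive \eref{eq.21}, I would set $\widetilde\psi:=\lambda^{-1}\varphi_\tau^{[1]}$ for $\lambda\ne 0$; then $\widetilde\psi(0)=I$, and dividing the second equation of the system by $\lambda$ gives $\widetilde\psi_{-\tau}^{[1]}=-\lambda\varphi$, whence $\widetilde\psi_{-\tau}^{[1]}(0)=0$. A further differentiation together with $\varphi_\tau^{[1]}=\lambda\widetilde\psi$ shows that $\widetilde\psi$ satisfies \eref{eq.12}, so uniqueness forces $\widetilde\psi=\psi$; the two identities in \eref{eq.21} are then read off directly from the definition of $\widetilde\psi$. The case $\lambda=0$ follows by analyticity in~$\lambda$.

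For part~(ii), I would plug the ansatz \eref{eq.22} into \eref{eq.11} and \eref{eq.12}, interpret the derivatives distributionally, and change variables in the Fourier-type integrals to recast the resulting relations as coupled Volterra integral equations on $\Omega$ for $K_{\tau,D}$ and $K_{\tau,N}$ whose data are built from $\tau$ and $\tau^2$. The algebra $G_2^+$ (see~\ref{add.1}) is engineered precisely so that $I+V$ is invertible in it for every Volterra operator $V$ with $L_2$ kernel; a contraction argument in $G_2^+$ therefore yields unique $K_{\tau,D},K_{\tau,N}\in G_2^+$, and the explicit fixed-point formulas provide the continuous dependence on~$\tau$ claimed in~(iii).

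The principal obstacle is that the formally associated scalar-like potential $q=\tau'+\tau^2$ lies only in $\mathbf W_2^{-1}$, so the classical Povzner--Levitan transformation-operator theory, which requires $q\in\mathbf L_1$, cannot be applied to the equation in its standard form. The remedy is to stay with the quasi-derivative first-order system throughout, keeping every coefficient in $\mathbf L_2$; this is precisely the framework developed in~\cite{MTr}, from which Theorem~\ref{th.21} is extracted.
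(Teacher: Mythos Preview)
The paper does not actually prove Theorem~\ref{th.21}: it simply states that ``the results of~\cite{MTr} imply the following statement'' and then quotes the theorem. Your sketch---reduction to the first-order quasi-derivative system, Carath\'eodory/Picard existence and uniqueness, and a Volterra fixed-point construction of the transformation kernels in~$G_2^+$ with continuity in~$\tau$ as a by-product---is a correct outline of the argument and, as you yourself note at the end, is precisely the framework of~\cite{MTr} from which the paper extracts the theorem.
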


\begin{lemma}\label{le.22}
 Suppose that $\tau\in \mathbf L _2$ and $q=b(\tau)$. Then
\begin{equation*}
          \mathit{D}(\mathfrak t_{\tau,D})=\{f\in  W_{2,0}^1\mid(-f''+q f)\in L_2 \},\qquad
          \mathfrak t_{\tau,D}(f)=-f''+qf,
\end{equation*}
where the derivative $f''$ and the product $qf$ are interpreted in
the distributional sense, i.e., as elements of the space
$W_2^{-1}$. Moreover, if $\tau_1$ is another element of~$\mathbf L
_2$, then
\begin{enumerate}
          \item $\mathfrak t_{\tau_1,D}=\mathfrak t_{\tau,D} \Longleftrightarrow b(\tau_1)=b(\tau);$
          \item $K_{\tau_1,D}=K_{\tau,D}\Longleftrightarrow b(\tau_1)=b(\tau);$
          \item $K_{\tau_1,N}=K_{\tau,N}\Longleftrightarrow \tau_1=\tau$.
\end{enumerate}
\end{lemma}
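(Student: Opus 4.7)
The plan is to establish Part~1 by a distributional expansion combined with a bootstrap, and then derive the three equivalences from Part~1 and the representation formulas~\eref{eq.22}.

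\textbf{Part 1.} I start from the identity $\mathfrak t_\tau(f) = -(f_\tau^{[1]})' - \tau f_\tau^{[1]} = -f'' + (\tau f)' - \tau f' + \tau^2 f = -f'' + (\tau' + \tau^2)f = -f'' + qf$, understood in $\mathbf W_2^{-1}$. The distributional Leibniz rule $(\tau f)' = \tau' f + \tau f'$ used in the middle step is legitimate because $\tau\in\mathbf L_2$ and $f\in W_2^1\subset C([0,1])$, so $\tau f\in\mathbf L_2$ and $\tau f'\in L_1$. This proves the action formula and the inclusion $\mathit D(\mathfrak t_{\tau, D})\subseteq\{f\in W_{2,0}^1 : -f''+qf\in L_2\}$. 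For the converse, given $f\in W_{2,0}^1$ with $h:=-f''+qf\in L_2$, I read the identity as a distributional equation $(f_\tau^{[1]})' = -h - \tau f_\tau^{[1]}$ for $g:=f_\tau^{[1]}\in L_2$. Since $\tau g$ is a priori only in $L_1$, one first obtains $g'\in L_1$, so $g$ is absolutely continuous and bounded on $[0,1]$; this upgrades $\tau g$ to $L_2$ and $g'\in L_2$, giving $g\in W_2^1$ and $f\in\mathit D(\mathfrak t_{\tau, D})$.

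\textbf{Equivalences (i) and (ii).} The $(\Leftarrow)$ of (i) is immediate from Part~1 since both domain and action depend only on $q=b(\tau)$. For (ii) $(\Leftarrow)$, the Cauchy problem~\eref{eq.11} depends only on $q$ and $\lambda$: by Part~1 (applied on all of $\mathit D(\mathfrak t_\tau)$, not just the Dirichlet restriction) the equation reads $-\varphi''+q\varphi=\lambda^2\varphi$, the condition $\varphi(0)=0$ is $\tau$-free, and the representation~\eref{eq.22} together with a Cauchy--Schwarz bound on the integral term forces the $\tau$-free asymptotic $\varphi(x,\lambda,\tau)=\lambda x I+o(x)$ as $x\to 0^+$. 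Uniqueness for this distributional Cauchy problem (from~\cite{MTr}) then yields $\varphi(\cdot,\lambda,\tau_1)=\varphi(\cdot,\lambda,\tau)$ for every $\lambda$, and uniqueness of the kernel in~\eref{eq.22} gives $K_{\tau_1, D}=K_{\tau, D}$. Conversely, if $K_{\tau_1, D}=K_{\tau, D}$ then $\varphi_0:=\varphi(\cdot,\lambda,\tau_1)=\varphi(\cdot,\lambda,\tau)$ for every $\lambda$, so Part~1 applied to $\mathfrak t_{\tau_j}(\varphi_0)=\lambda^2\varphi_0$ gives $[b(\tau_1)-b(\tau)]\varphi_0(\cdot,\lambda,\tau)=0$ in $\mathbf W_2^{-1}$; testing against $\phi\in C_c^\infty((0,1),\mathbb C^r)$ and varying $\lambda$ so that $\varphi_0(\cdot,\lambda)\phi$ runs over a dense family of test functions delivers $b(\tau_1)=b(\tau)$. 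The $(\Rightarrow)$ of (i) is analogous: the equality of operators implies $[b(\tau_1)-b(\tau)]f=0$ in $\mathbf W_2^{-1}$ for every $f$ in the common dense domain, whence the same density argument gives $b(\tau_1)=b(\tau)$.

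\textbf{Equivalence (iii).} The $(\Leftarrow)$ is trivial from Theorem~\ref{th.21}. For $(\Rightarrow)$, I specialise~\eref{eq.12} to $\lambda=0$: setting $g:=(d/dx+\tau)\psi(\cdot,0,\tau)$, the equation $(d/dx-\tau)g=0$ with the initial trace $g(0)=\psi_{-\tau}^{[1]}(0)=0$ has only the zero solution by uniqueness for linear first-order systems with $L_2$ coefficients. Hence $\psi(\cdot,0,\tau)$ is the fundamental matrix of $\psi'+\tau\psi=0$ with $\psi(0)=I$, and standard linear ODE theory gives pointwise invertibility on $[0,1]$. If $K_{\tau_1, N}=K_{\tau, N}$, then $\psi_0:=\psi(\cdot,0,\tau_1)=\psi(\cdot,0,\tau)$ simultaneously satisfies $\psi_0'+\tau_1\psi_0=0$ and $\psi_0'+\tau\psi_0=0$; subtracting and using pointwise invertibility of $\psi_0$ gives $\tau_1=\tau$ a.e.

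The hard part will be the density step in the reverse directions of (i) and (ii): passing from the family of distributional identities $[b(\tau_1)-b(\tau)]\varphi_0(\cdot,\lambda,\tau)=0$ (or the analogous identity on the domain of $\mathfrak t_{\tau, D}$) to $b(\tau_1)=b(\tau)$ requires verifying that the products $\varphi_0(\cdot,\lambda) v\cdot\phi$ form a total set in the test-space dual to $\mathbf W_2^{-1}$. The distributional computation of Part~1, the bootstrap argument, and the $\lambda=0$ fundamental-matrix argument in~(iii) are, by comparison, routine.
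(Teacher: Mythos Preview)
Your Part~1 and part~(iii) match the paper's proof closely: the distributional Leibniz step, the two-stage bootstrap $g\in W_1^1\Rightarrow g$ bounded $\Rightarrow g\in W_2^1$, and the fundamental-matrix argument for~$\psi(\cdot,0,\tau)$ are exactly what the paper does.

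The gap is in part~(ii), in both directions. You try to stay at the level of the distributional equation $-\varphi''+q\varphi=\lambda^2\varphi$ and close the argument either by a density/totality claim (for $\Rightarrow$) or by invoking ``uniqueness for this distributional Cauchy problem (from~\cite{MTr})'' (for $\Leftarrow$). Neither step is justified. Theorem~\ref{th.21} gives uniqueness only in the quasi-derivative class $\{u\in\mathbf W_2^1: u_\tau^{[1]}\in\mathbf W_2^1\}$, and you have no a~priori reason why $\varphi(\cdot,\lambda,\tau_1)$ lies in this class for~$\tau$: that would require $(\tau_1-\tau)\varphi\in\mathbf W_2^1$, which is exactly what is at stake. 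Your asymptotic $\varphi(x)=\lambda xI+o(x)$ is correct but does not by itself select a unique solution without first knowing the solution sits in the right class. Likewise, the density claim you flag as ``the hard part'' is genuinely hard and not carried out; the domain of~$\mathfrak t_{\tau,D}$ need not contain $C_c^\infty$, so it is not clear the test products are total.

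The paper avoids both difficulties by inserting one concrete intermediate fact: $g:=\tau_1-\tau$ is absolutely continuous on~$[0,1]$. For $(\Rightarrow)$ this comes from the first-order relation~\eref{eq.21}, which gives $g\,\varphi(\cdot,\lambda,\tau)=\lambda[\psi(\cdot,\lambda,\tau)-\psi(\cdot,\lambda,\tau_1)]\in\mathbf W_2^1$; since by Riemann--Lebesgue every $x_0\in(0,1]$ admits some~$\lambda_0$ with $\det\varphi(x_0,\lambda_0,\tau)\ne0$, one reads off that $g$ is AC. A direct expansion then yields $(g'+g\tau+\tau g+g^2)\varphi\equiv0$, and pointwise invertibility of~$\varphi$ for suitable~$\lambda$ gives $g'+g\tau+\tau g+g^2=0$, i.e.\ $b(\tau_1)=b(\tau)$, with no density argument needed. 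For $(\Leftarrow)$ the relation $b(\tau_1)=b(\tau)$ itself forces $g'=-(g\tau+\tau g+g^2)\in\mathbf L_1$, so $g$ is AC; then $(g\varphi)(0)=0$, so $\varphi(\cdot,\lambda,\tau)$ satisfies the $\tau_1$-quasi-derivative initial condition $\varphi_{\tau_1}^{[1]}(0)=\lambda I$ and one can invoke the uniqueness of Theorem~\ref{th.21} legitimately. The missing idea in your argument is precisely this AC step.
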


\begin{proof}
If $u\in \mathbf W_2^{-1}$ and $f\in W_{2,0}^1$, then we define
the product $u f\in W_2^{-1}$  as
$$
  (u f)_j=\sum\limits_{k=1}^r u_{jk}f_k, \qquad f=(f_1,\dots,f_r),\quad
                                                u=(u_{jk})_{1\le j,k\le r}.
$$
Observe that for every $f\in W_{2,0}^1$ the equality~$(\tau f)'=\tau' f +\tau f'$ holds in the distributional sense. If, therefore, $f\in  \mathit{D}(\mathfrak t_{\tau,D})$, then
$$
  \mathfrak t_{\tau,D}(f)= -f''+(\tau f)'-\tau f'+\tau^2 f=-f''+qf \in L_2.
$$
On the other hand, if $f\in W_{2,0}^1$ and $(-f''+qf)\in L_2$,
then taking into account the equality
\begin{equation}\label{eq.23}
           -(f'-\tau f)'=(-f''+qf)+\tau(f'-\tau f),
\end{equation}
we obtain $(f'-\tau f)\in W_1^1$. Moreover, the function $f'-\tau
f$ is bounded, and, using~\eref{eq.23} again, we obtain that
$(f'-\tau f)\in W_2^1$, i.e., that $f\in
 \mathit{D}(\mathfrak t_{\tau,D})$.
This establishes the claim about~$\mathfrak t_{\tau,D}$.

Part \emph{(i)} is the straightforward consequence of the above
considerations. Let us prove part~\emph{(ii)}. Assume that
$K_{\tau_1,D}=K_{\tau,D}$. Then $\varphi(\cdot,0,\tau_1)=\varphi(\cdot,0,\tau)$ in view of~\eref{eq.22}, and~\eref{eq.21} yields the relations
\begin{eqnarray}\label{eq.24}
         (\tau_1-\tau)\varphi(\cdot,\lambda,\tau)
               =\lambda[\psi(\cdot,\lambda,\tau)-\psi(\cdot,\lambda,\tau_1)],\\
    \label{eq.25}
           \left(\frac{\rmd}{\rmd x}+\tau_1\right)\left(\frac{\rmd}{\rmd x}-\tau_1\right)\varphi(\cdot,\lambda,\tau)
          =\left(\frac{\rmd}{\rmd x}+\tau\right)\left(\frac{\rmd}{\rmd x}-\tau\right)\varphi(\cdot,\lambda,\tau).
\end{eqnarray}
Note that according to the Riemann--Lebesgue lemma for every
$x_0\in (0,1]$ there exists $\lambda_0\in \mathbb R$ such that
$\det\varphi(x_0,\lambda_0,\tau)\ne 0.$  Therefore we conclude
from the equation~\eref{eq.24} that the function $g=\tau_1-\tau$
is absolutely continuous on the interval $(0,1)$. Putting
$\tau_1=\tau+g$ in~\eref{eq.25}, we easily obtain the equality
$$
   (g'+g\tau+\tau g + g^2)\varphi(\cdot,\lambda,\tau)=0, \qquad \lambda\in\mathbb C,
$$
which implies that $g' + g\tau + \tau g + g^2=0$ and, thus, that
$b(\tau_1)=b(\tau)$.

Now let $b(\tau_1)=b(\tau)$. Then the function $g=\tau_1-\tau$
obeys $g'=\tau^2 -\tau^2_1\in \mathbf L_1$ and thus is absolutely
continuous on the interval~$[0,1]$. Next, the relation $g'+g\tau +
\tau g + g^2=0$ yields equality~\eref{eq.25}. It follows
from~\eref{eq.25} and the definition of the functions
$\varphi(\cdot,\,\lambda,\tau_1)$ and
$\varphi(\cdot,\lambda,\tau)$ that  they are solutions of the
Cauchy problem
\[
\left(\frac{\rmd}{\rmd x}+\tau_1\right)\left(\frac{\rmd}{\rmd
x}-\tau_1\right) y= -\lambda^2 y ,\qquad y(0)=0,\quad
y^{[1]}_{\tau_1}(0)=\lambda I.
\]
Thus $\varphi(\cdot,\lambda,\tau_1)=\varphi(\cdot,\lambda,\tau)$
for all $\lambda\in\mathbb C$. Therefore (see~\eref{eq.22}),
\[
    \int_0^x(\sin\lambda t)[K_{\tau_1,D}(x,t)-K_{\tau,D}(x,t)]\,\rmd t=0, \qquad
                    x\in [0,1],\quad \lambda\in\mathbb C.
\]
Since $\lambda$ and $x$ are arbitrary, $K_{\tau_1,D}=K_{\tau,D}$.

It remains to prove~\emph{(iii)}. If $\tau_1= \tau$, then~$K_{\tau_1,N}=K_{\tau,N}$ by the uniqueness claim of Theorem~\ref{th.21}. Assume that
$K_{\tau_1,N}=K_{\tau,N}$; then, according
to~\eref{eq.21} and~\eref{eq.22},
\[\fl
   \left(\frac{\rmd}{\rmd x}+\tau_1\right)\psi(x,0,\tau_1)
   =\left(\frac{\rmd}{\rmd x}+\tau\right)\psi(x,0,\tau)=0,\qquad
   \psi(\cdot,0,\tau_1)=\psi(\cdot,0,\tau),
\]
and thus $(\tau_1-\tau)\psi(\cdot,0,\tau)=0$. Therefore to prove
part~\emph{(iii)} it is enough to show that the matrix $\psi(x,0,\tau)$
is invertible for every $x\in [0,1]$. Suppose the last statement
is false. Then there exist $x_0\in (0,1]$ and $c\in \mathbb
C^r\setminus\{0\}$ such that $\psi(x_0,0,\tau)c=0$. Therefore the
function $f=\psi(\cdot,0,\tau)c$ is a nonzero solution of the
Cauchy problem $f'+\tau f=0$, $f(x_0)=0$, which is impossible. The proof is complete.
\end{proof}

 For every $\tau\in \mathbf L_2$, $\lambda\in
\mathbb C$ and $x\in [0,1]$ we put
\begin{equation*}\fl
          \mathcal W(x,\lambda,\tau)=\left(\begin{array}{rl}
                                                       \psi(x,\lambda,\tau) & \varphi(x,\lambda,-\tau) \\
                                                       -\varphi(x,\lambda,\tau) & \psi(x,\lambda,-\tau)
                                            \end{array} \right), \quad
           \mathcal Q(x,\lambda,\tau)=\left(\begin{array}{rl}
                                                      -\tau(x) & \lambda I \\
                                                      -\lambda I & \tau(x)
                                            \end{array} \right).
\end{equation*}
It follows from~\eref{eq.21} that $\frac{\rmd}{\rmd x}\mathcal
W(x,\lambda,\tau)=\mathcal Q(x,\lambda,\tau)\mathcal
W(x,\lambda,\tau)$, so that
\[
    \frac{\rmd}{\rmd x} \mathcal W^*(x,\overline\lambda,-\tau^*)
                        \mathcal W(x,\lambda,\tau) =0.
\]
Therefore the matrices $\mathcal W^*(x,\overline\lambda,-\tau^*)$ and $\mathcal W(x,\lambda,\tau)$ are inverse to each other, which yields the relations
\begin{equation}\label{eq.26}
           \eqalign{\varphi(x,\lambda,\tau)\varphi^*(x,\overline\lambda,-\tau^*)
                                 +\psi(x,\lambda,-\tau)\psi^*(x,\overline\lambda,\tau^*)\equiv I, \\
           \eqalign\varphi(x,\lambda,\tau)\psi^*(x,\overline\lambda,-\tau^*)
                                 -\psi(x,\lambda,-\tau)\varphi^*(x,\overline\lambda,\tau^*)\equiv 0.}
\end{equation}

  Let us denote by $\Phi_\tau(\lambda)$ and $\Psi_\tau(\lambda)$
$(\lambda\in\mathbb C)$ the operators acting from $\mathbb C^r$ to
$L_2$ by the formulas
\begin{equation}\label{eq.27}
          [\Phi_\tau(\lambda)c](x):=\sqrt 2\varphi(x,\lambda,\tau)c,\qquad
          [\Psi_\tau(\lambda)c](x):=\sqrt 2\psi(x,\lambda,\tau)c.
\end{equation}
Taking into consideration~\eref{eq.22}, we obtain, for
$\lambda\in\mathbb C$,
\begin{equation}\label{eq.28}
          \Phi_\tau(\lambda)=(\mathcal I+\mathcal K_{\tau,D})\Phi_0(\lambda), \qquad
           \Psi_\tau(\lambda)=(\mathcal I+\mathcal K_{\tau,N})\Psi_0(\lambda),
\end{equation}
where $\mathcal K_{\tau,D}$ and $\mathcal K_{\tau,N}$ are integral
operators with kernels~$K_{\tau,D}$ and~$K_{\tau,D}$ respectively
and~$\mathcal I$ is the identity operator in the algebra $\mathcal
B(L_2)$, which is an algebra of bounded linear operators acting in~$L_2$. Note that since~$K_{\tau,D}$ and $K_{\tau,N}$ belong to~$G_2^+$, the operators~$\mathcal K_{\tau,D}$ and~$\mathcal K_{\tau,N}$ belong to the algebra~$\mathfrak G_2^+$
(see~\ref{add.1}), and hence they are Volterra operators~\cite[Ch.~IV]{GGK1}.

\begin{lemma}\label{le.23}
Let $\tau \in \mathbf L _2$. Then the following statements hold:
\begin{itemize}
\item[(i)] the operator functions $\lambda\mapsto
\Phi_\tau(\lambda)/\lambda$ and $\lambda\mapsto
\Psi_\tau(\lambda)$ are analytic in $\mathbb C$; moreover,
\begin{equation}\label{eq.29}
         \eqalign{\ker\Phi_\tau(\lambda)=\{0\}, \qquad
                       \Ran\Phi^*_\tau(\lambda)=\mathbb C^r,  \qquad
                       \lambda\in\mathbb C\setminus\{0\},      \\
         \eqalign \ker\Psi_\tau(\lambda)=\{0\},  \qquad
                       \Ran\Psi^*_\tau(\lambda)=\mathbb C^r,   \qquad
                                                \lambda\in\mathbb C,}
\end{equation}
\begin{equation}\fl \label{eq.210}
       \ker(T_q -\lambda^2\mathcal I)=\Phi_\tau(\lambda)\mathcal E_\lambda, \quad
       \ker(S_\tau -\lambda^2\mathcal I)=\Psi_\tau(\lambda)\mathcal E_\lambda,\qquad \lambda\in\mathbb C,
\end{equation}
where $\mathcal E_\lambda:=\ker\varphi(1,\lambda,\tau)$;

\item[(ii)] the operator functions $\lambda\mapsto
\varphi(1,\lambda,\tau)^{-1}$ and
\begin{equation}\label{eq.211}
         \lambda\mapsto m_\tau(\lambda)=-\varphi(1,\lambda,\tau)^{-1}\psi(1,\lambda,-\tau)
\end{equation}
are meromorphic in $\mathbb C$. Moreover,
$m_0(\lambda)=-\cot\lambda I$ and
\begin{equation}\label{eq.212}
          \|m_\tau(\lambda) + \cot\lambda I\|=\mathrm{o}(1)
\end{equation}
as $\lambda\to\infty$ within the domain $\mathcal O =\{z\in\mathbb
C\mid\forall n\in\mathbb Z \quad |z-\pi n|>1 \}$.
\end{itemize}
\end{lemma}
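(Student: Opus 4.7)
\smallskip
\noindent\textbf{Proof proposal.}
For part~(i), the plan is to deduce everything from the integral representations~\eref{eq.22} and the factorization~\eref{eq.28}. Analyticity of $\lambda\mapsto\Phi_\tau(\lambda)/\lambda$ and $\lambda\mapsto\Psi_\tau(\lambda)$ is read off directly: the kernels~$K_{\tau,D}$ and $K_{\tau,N}$ are independent of~$\lambda$, and $\sin\lambda x/\lambda$ and $\cos\lambda x$ are entire. For~\eref{eq.29}, I would invoke the fact that $\mathcal{K}_{\tau,D},\mathcal{K}_{\tau,N}\in\mathfrak{G}_2^+$ are Volterra operators, so $\mathcal{I}+\mathcal{K}_{\tau,D}$ and $\mathcal{I}+\mathcal{K}_{\tau,N}$ are invertible in $\mathcal{B}(L_2)$; \eref{eq.28} then gives $\ker\Phi_\tau(\lambda)=\ker\Phi_0(\lambda)$ and $\ker\Psi_\tau(\lambda)=\ker\Psi_0(\lambda)$. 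The kernels of the free operators are trivial: $\sin(\lambda x)c\equiv0$ forces $c=0$ for $\lambda\ne 0$, and $\cos(\lambda x)c\equiv0$ forces $c=0$ for every $\lambda$. The range identities $\Ran\Phi_\tau^*(\lambda)=\Ran\Psi_\tau^*(\lambda)=\mathbb{C}^r$ then come for free, since $\mathbb{C}^r$ is finite-dimensional and $(\ker\Phi_\tau)^\perp=\Ran\Phi_\tau^*$.

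The kernel identifications~\eref{eq.210} I would argue by Cauchy uniqueness. Using Lemma~\ref{le.22}, $T_q=\mathfrak{t}_{\tau,D}$ with Dirichlet boundary conditions; any $f\in\ker(T_q-\lambda^2\mathcal{I})$ with $\lambda\ne 0$ solves the ODE~\eref{eq.11} and satisfies $f(0)=0$, hence is of the form $f=\lambda^{-1}\varphi(\cdot,\lambda,\tau)c$ with $c:=f_\tau^{[1]}(0)$; the condition $f(1)=0$ reduces to $c\in\mathcal{E}_\lambda$. The case $\lambda=0$ uses $T_q>0$, and the case $\lambda\neq 0$ identifies $\ker(T_q-\lambda^2\mathcal{I})$ with $\Phi_\tau(\lambda)\mathcal{E}_\lambda$ up to the inessential constant $\sqrt{2}/\lambda$. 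For $S_\tau=\mathfrak{t}_{-\tau,N}$, any eigenfunction with $f_{-\tau}^{[1]}(0)=0$ is $f=\psi(\cdot,\lambda,\tau)c$; by~\eref{eq.21} one has $f_{-\tau}^{[1]}(1)=-\lambda\varphi(1,\lambda,\tau)c$, so the second boundary condition again forces $c\in\mathcal{E}_\lambda$.

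For part~(ii), meromorphicity of $\varphi(1,\lambda,\tau)^{-1}$ reduces to showing $\det\varphi(1,\lambda,\tau)\not\equiv 0$, which was already established in the proof of Lemma~\ref{le.22} via Riemann--Lebesgue. Then $m_\tau$ is meromorphic as a product with the entire $\psi(1,\lambda,-\tau)$, and $m_0(\lambda)=-\cot\lambda\cdot I$ is immediate. For the asymptotic~\eref{eq.212} I would start from
\[
  m_\tau(\lambda)+\cot\lambda\cdot I
    =-\varphi(1,\lambda,\tau)^{-1}\bigl[\psi(1,\lambda,-\tau)-\varphi(1,\lambda,\tau)\cot\lambda\cdot I\bigr],
\]
and substitute~\eref{eq.22}: the bracket collapses to $\epsilon_N(\lambda)-\cot\lambda\cdot\epsilon_D(\lambda)$, where $\epsilon_D$ and $\epsilon_N$ are the $L_2$-Fourier transforms of $K_{\tau,D}(1,\cdot)$ against $\sin(\lambda t)$ and of $K_{-\tau,N}(1,\cdot)$ against $\cos(\lambda t)$. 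Using the product-to-sum identity $\cot\lambda\sin(\lambda t)=\cos(\lambda t)-\sin(\lambda(1-t))/\sin\lambda$, the bracket rewrites as
\[
  \int_0^1\cos(\lambda t)\bigl[K_{-\tau,N}(1,t)-K_{\tau,D}(1,t)\bigr]\rmd t
    +\frac{1}{\sin\lambda}\int_0^1\sin(\lambda(1-t))K_{\tau,D}(1,t)\rmd t.
\]
I would combine this with uniform lower bounds $|\sin\lambda|\ge c>0$ on~$\mathcal{O}$ (immediate from $|\sin\lambda|^2=\sin^2(\Re\lambda)+\sinh^2(\Im\lambda)$) and pointwise bounds $|\sin(\lambda t)/\sin\lambda|,|\cos(\lambda t)/\sin\lambda|\le C$ for $t\in[0,1]$, $\lambda\in\mathcal{O}$, followed by dominated convergence, to extract uniform decay.

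The principal obstacle is the asymptotic of part~(ii), precisely because $\mathcal{O}$ mixes two qualitatively different regimes: the ``horizontal'' strip where $|\Im\lambda|$ stays bounded (handled by Riemann--Lebesgue for $L_2$ kernels) and the ``vertical'' region $|\Im\lambda|\to\infty$ where $\sin\lambda$ grows exponentially (handled by dominated convergence against the envelope $|\sin(\lambda t)/\sin\lambda|\lesssim e^{|\Im\lambda|(t-1)}$). Merging these into a single uniform $o(1)$ statement, and ensuring that the $\varphi(1,\lambda,\tau)^{-1}$ factor stays uniformly bounded on~$\mathcal{O}$ for $|\lambda|$ large, is the technical crux.
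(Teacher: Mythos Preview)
Your proof of part~(i) is essentially the paper's argument, just written out in more detail: the paper dispatches all of~\eref{eq.29} and~\eref{eq.210} with the single remark that they ``obviously follow from~\eref{eq.27} and~\eref{eq.28}'', whereas you spell out the Cauchy-uniqueness reasoning behind~\eref{eq.210}. That is fine.

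For part~(ii) your approach is correct but the paper proceeds differently and more economically. Rather than rewriting $m_\tau(\lambda)+\cot\lambda\,I$ via trigonometric identities and then splitting $\mathcal O$ into a horizontal strip (handled by the real Riemann--Lebesgue lemma) and a vertical region (handled by dominated convergence), the paper invokes once the \emph{complex} Riemann--Lebesgue estimate
\[
   \lim_{|\lambda|\to\infty}\rme^{-|\Im\lambda|}\int_0^1 f(t)\,\rme^{\pm\rmi\lambda t}\,\rmd t = 0,
   \qquad f\in L_1(0,1),
\]
taken from Marchenko's monograph. This yields directly $\rme^{-|\Im\lambda|}\|\varphi(1,\lambda,\tau)-\sin\lambda\,I\|\to0$ and $\rme^{-|\Im\lambda|}\|\psi(1,\lambda,-\tau)-\cos\lambda\,I\|\to0$ uniformly in~$\arg\lambda$; combined with the elementary lower bound $|\sin\lambda|\ge c\,\rme^{|\Im\lambda|}$ on~$\mathcal O$, one gets invertibility of $\varphi(1,\lambda,\tau)$ and~\eref{eq.212} in one stroke, with no regime split. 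Your route is more self-contained (it does not appeal to an external lemma) but pays for this with the case analysis you flag as the ``technical crux''; the paper's route trades that work for a literature citation. Both are valid, and in fact your two-regime argument is essentially a hand-rolled proof of the cited lemma.
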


\begin{proof} Part \emph{(i)} obviously follows from~\eref{eq.27} and~\eref{eq.28}.
From~\eref{eq.22} we obtain that
\begin{equation}\label{eq.213}
          \eqalign{ \varphi(1,\lambda,\tau)=\sin\lambda I+\int_{0}^{1}(\sin\lambda t) K_{\tau,{D}}(1,t)\,\rmd t, \\
           \eqalign \psi(1,\lambda,-\tau)=\cos \lambda I+\int_{0}^{1}(\cos\lambda t)\,K_{-\tau,N}(1,t)\,\rmd t.}
\end{equation}
Since  $K_{\tau,D},K_{-\tau,N}\in G_2^+$, the functions
$K_{\tau,D}(1,\cdot)$ and $K_{-\tau,N}(1,\cdot)$ belong to
$\mathbf L_2$. The relations
\begin{equation*}
         \lim\limits_{|\lambda|\to\infty}\rme^{-|\Im\lambda|}\int_0^1 f(x)\cos\lambda x \,\rmd x
        =\lim\limits_{|\lambda|\to\infty}\rme^{-|\Im\lambda|}\int_0^1 f(x)\sin\lambda x \,\rmd x
        =0
\end{equation*}
for $f\in L_1(0,1)$ that refine the classical Riemann--Lebesgue theorem~\cite[Ch.1]{Titch} are proved in~\cite[Ch.1]{Mar}; thus
\[ \fl
        \lim\limits_{|\lambda|\to\infty}\rme^{-|\Im\lambda|}\|\varphi(1,\lambda,\tau)-\sin\lambda I\|
    =\lim\limits_{|\lambda|\to\infty}\rme^{-|\Im\lambda|}\|\psi(1,\lambda,-\tau)-\cos\lambda I\|
    =0.
\]
In particular, $\varphi(1,\lambda,\tau)$ is invertible for all $\lambda\in\mathcal O$ large enough, so that $m_\tau$ is meromorphic and the relation~\eref{eq.212} holds.
\end{proof}

\begin{theorem}\label{th.24}
Let $\tau\in \Re\mathbf L _2$ and $q=b(\tau)$. Then the following
statements hold:
  \begin{enumerate}
     \item The operators $S_\tau$ and $T_q$ are self--adjoint; moreover, $S_\tau\ge 0$ and $T_q>0$.
     \item The spectra  $\sigma(S_\tau)$ and $\sigma(T_q)$ consist of isolated eigenvalues and
                  \begin{equation}\label{eq.214}
                             \sigma(S_{\tau})=\{\lambda^2 \mid\ker\varphi(1,\lambda,\tau)\ne\{0\}\}=\sigma(T_q)\cup\{0\}.
                  \end{equation}
     \item  Let $\lambda_{j}=\lambda_j(\tau)$ and $P_{j,\tau}$ $($resp. $Q_{j,q})$ be the orthogonal projector on the eigensubspace $\ker(S_\tau-\lambda^2_j\mathcal I)$ $($resp. $\ker(T_q-\lambda^2_j\mathcal I))$; then
                   \begin{equation}\label{eq.215}
                             \sum\limits_{j=0}^\infty P_{j,\tau} =\mathcal I, \qquad
                             \sum\limits_{k=1}^\infty Q_{k,q} =\mathcal I.
                   \end{equation}
     \item The norming constants $\alpha_j=\alpha_j(\tau)$ (see Introduction) satisfy the relations $\alpha_0>0$ and $\alpha_j\ge 0$, $j\in\mathbb N$. Moreover, for $j\in\mathbb Z_+$ and $k\in\mathbb N$ we have
                  \begin{equation}\label{eq.216}
                             P_{j,\tau}=\Psi_\tau(\lambda_j)\alpha_j\Psi^*_\tau(\lambda_j), \qquad
                             Q_{k,q}=\Phi_\tau(\lambda_k)\alpha_k\Phi^*_\tau(\lambda_k),
                  \end{equation}
where $\Phi^*_\tau(\lambda)=[\Phi_\tau(\lambda)]^*$,
$\Psi^*_\tau(\lambda)=[\Psi_\tau(\lambda)]^*$ (see~\eref{eq.27}).
     \item  If $\tau_1\in \Re\mathbf L _2$ and $b(\tau_1)=b(\tau)$, then $\lambda_j(\tau_1)=\lambda_j(\tau)$ and $\alpha_j(\tau_1)=\alpha_j(\tau)$ for every $j\in\mathbb N$.
  \end{enumerate}
\end{theorem}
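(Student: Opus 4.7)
My plan is to establish parts (i)--(iv) via an explicit construction of the resolvents of $T_q$ and $S_\tau$ as Hilbert--Schmidt integral operators built from $\varphi(\cdot,\lambda,\tau)$ and $\psi(\cdot,\lambda,-\tau)$, and to obtain (v) as a direct consequence of Lemma~\ref{le.22}.

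First I would verify symmetry and non-negativity. Integration by parts applied to the factorised expressions, interpreted distributionally as in the proof of Lemma~\ref{le.22}, yields $\langle T_qf,f\rangle = \|(\frac{\rmd}{\rmd x}-\tau)f\|^2$ and $\langle S_\tau f,f\rangle = \|(\frac{\rmd}{\rmd x}+\tau)f\|^2$, the boundary terms vanishing by the Dirichlet, respectively quasi-Neumann, conditions encoded in $\mathit{D}(\mathfrak t_{\tau,D})$ and $\mathit{D}(\mathfrak t_{-\tau,N})$; strict positivity of $T_q$ follows because $f'-\tau f=0$ with $f(0)=0$ has only the trivial solution. Next, using $\varphi(\cdot,\lambda,\tau)$ (which vanishes at $x=0$) together with the modified solution $\chi(x,\lambda):=\psi(x,\lambda,-\tau)+\varphi(x,\lambda,\tau) m_\tau(\lambda)$ (which vanishes at $x=1$ wherever $\varphi(1,\lambda,\tau)$ is invertible), I would assemble the Green's functions of $S_\tau-\lambda^2\mathcal I$ and $T_q-\lambda^2\mathcal I$. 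By Lemma~\ref{le.23}(ii) and~\eref{eq.212}, these are well defined for $\lambda\in\mathcal O$ of sufficiently large modulus, and the corresponding integral operators are Hilbert--Schmidt; hence $T_q$ and $S_\tau$ are self-adjoint with compact resolvents. The spectrum is therefore discrete, its eigenvalues coincide with the $\lambda^2$ for which $\varphi(1,\lambda,\tau)$ is non-invertible (by Lemma~\ref{le.23}(i)), and $0\in\sigma(S_\tau)\setminus\sigma(T_q)$ because $\Phi_\tau(0)=0$ while $\Psi_\tau(0)c=\sqrt 2\psi(\cdot,0,\tau)c$ is injective. This proves~\eref{eq.214}, and the completeness~\eref{eq.215} is then the standard spectral theorem for self-adjoint operators with compact resolvent.

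To derive~\eref{eq.216} I would compute the residues of the resolvent kernel at each pole $\lambda_j$: since $\varphi$ and $\psi$ are entire, the singular part of the kernel is carried solely by the $m_\tau(\lambda)$-factor inside $\chi$, whose residue at $\lambda_j$ equals $-\alpha_j$ by~\eref{eq.15}. Using the intertwining relations~\eref{eq.21} to swap $\varphi$-type data for $\psi$-type data, together with the normalisation~\eref{eq.27}, I would recast the resulting residue in the form $\Psi_\tau(\lambda_j)\alpha_j\Psi_\tau(\lambda_j)^*$ in the $S_\tau$-case and $\Phi_\tau(\lambda_k)\alpha_k\Phi_\tau(\lambda_k)^*$ in the $T_q$-case. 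Non-negativity $\alpha_j\ge 0$ then follows from non-negativity of the orthogonal projectors combined with the injectivity statements in~\eref{eq.29}, and $\alpha_0>0$ because $\mathcal E_0=\mathbb C^r$. Finally, (v) is immediate: $b(\tau_1)=b(\tau)$ gives $K_{\tau_1,D}=K_{\tau,D}$ by Lemma~\ref{le.22}(ii), hence $\varphi(\cdot,\lambda,\tau_1)=\varphi(\cdot,\lambda,\tau)$ by~\eref{eq.22}, so $\mathcal E_\lambda$ and all $\lambda_j$ are unchanged, and the projectors $Q_{k,q}$ in~\eref{eq.216}, being intrinsic to $T_q$, force $\alpha_k(\tau_1)=\alpha_k(\tau)$ via the injectivity of $\Phi_\tau(\lambda_k)|_{\mathcal E_{\lambda_k}}$.

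I expect the principal obstacle to be the residue calculation leading to~\eref{eq.216}: linking the one-variable Weyl--Titchmarsh object $m_\tau$ (whose residues by definition produce the $\alpha_j$) to the two-variable kernel of the resolvent (whose residues give the $P_{j,\tau}$ and $Q_{k,q}$) requires careful matrix bookkeeping and repeated application of the intertwining identities~\eref{eq.21}, and absorbs the bulk of the argument.
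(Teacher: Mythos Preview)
Your proposal is correct and follows essentially the same route as the paper: symmetry and positivity via integration by parts, explicit construction of the resolvents as integral operators with $m_\tau$ carrying the singular part, residue computation to obtain~\eref{eq.216}, and Lemma~\ref{le.22} for part~(v).

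The one noteworthy difference is organisational. You plan to build a single Green's kernel from $\varphi(\cdot,\lambda,\tau)$ and $\chi=\psi(\cdot,\lambda,-\tau)+\varphi(\cdot,\lambda,\tau)m_\tau(\lambda)$ and then use the intertwining relations~\eref{eq.21} to pass between $\varphi$- and $\psi$-type data when extracting residues; you flag this conversion as the main effort. The paper instead writes each resolvent from the outset as a sum of an \emph{entire} operator ($X(\lambda)$ for $T_q$, $Y(\lambda)$ for $S_\tau$) plus the rank-$r$ piece $\tfrac1{2\lambda}\Phi_\tau(\lambda)m_\tau(\lambda)\Phi_\tau^*(\overline\lambda)$, respectively $\tfrac1{2\lambda}\Psi_\tau(\lambda)m_\tau(\lambda)\Psi_\tau^*(\overline\lambda)$, verified directly using~\eref{eq.21} and~\eref{eq.26}. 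With the $m_\tau$-dependence already separated in this form, the contour integral for $P_{j,\tau}$ (resp.\ $Q_{k,q}$) reduces to a single line and no post-hoc intertwining is needed. Note also that $\varphi$ and $\chi$ solve the $\mathfrak t_\tau$-equation, so they assemble the Green's function of $T_q$ but not directly that of $S_\tau=\mathfrak t_{-\tau,N}$; the paper handles $S_\tau$ with a separate ansatz built from $\psi(\cdot,\lambda,\tau)$ and $\varphi(\cdot,\lambda,-\tau)$, which is simpler than intertwining your $T_q$-resolvent. Either way the argument goes through; the paper's decomposition just makes what you anticipated as the ``bulk of the argument'' essentially immediate.
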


\begin{proof}
Writing $S=S_\tau$ and $T=T_q$ for short and integrating by parts, we get
\[
  (Sf|g)=\left(f'-\tau f|g'-\tau g\right)=(f|Sg),\qquad f,g\in \mathit{D}(S),
\]
\[
  (Tf|g)=\left(f'-\tau f|g'-\tau g\right)=(f|Tg),\qquad f,g\in \mathit{D}(T),
\]
where $(\,\cdot\,|\,\cdot\,)$ is the scalar product in the space
$L_2$. Therefore the operators $S$ and $T$ are symmetric and
nonnegative. Suppose that
 $(Tf|f)= 0$ for some $f\in \mathit{D}(T)$. Then~$f$ is a solution of the Cauchy problem
$f'-\tau f=0$, $f(0)=0$. The uniqueness theorem then gives $f=0$,
and, therefore $T>0$.

Take now an arbitrary~$f \in L_2$ and a point~$\lambda\in\mathbb C\setminus\{0\}$ for which the matrix $\varphi(1,\lambda,\tau)$ is nonsingular. The function
\begin{equation*}
    \eqalign{g(x) = [X(\lambda)f](x) &:=
        \frac{\varphi(x,\lambda,\tau)}\lambda
            \int_x^1 \psi^*(t,\overline\lambda,-\tau)\, f(t)\,\rmd t \\
      &\quad + \frac{\psi(x,\lambda,-\tau)}\lambda
            \int_0^x \varphi^*(t,\overline\lambda,\tau)\, f(t)\,\rmd t}
\end{equation*}
vanishes at $x=0$, belongs to the domain of the differential expression~$\mathfrak t_\tau$, and solves the equation $\mathfrak t_\tau g = \lambda^2 g + f$, as can be verified directly by using the relations~\eref{eq.21} and~\eref{eq.26}. A generic solution of the above differential equation that vanishes at $x=0$ takes therefore the form
$\varphi(x,\lambda,\tau) c + g$ for some vector $c\in\mathbb C^r$; the choice
\[
    c:= \frac{m_\tau(\lambda)}{\lambda}\int_0^1 \varphi^*(t,\overline\lambda,\tau)\, f(t)\,\rmd t
\]
makes this solution to vanish at the point $x=1$ as well. This implies that the point $\lambda^2$ is a resolvent point of the operator $T$ and that the resolvent of $T$ is given by
\[
    (T-\lambda^2\mathcal I)^{-1} = \frac1{2\lambda}\Phi_\tau(\lambda) m_\tau(\lambda)\Phi_\tau^*(\overline\lambda)+ X(\lambda).
\]
Similar arguments show that the resolvent of the operator $S$ at the point~$\lambda^2$ equals
\[
    (S-\lambda^2\mathcal I)^{-1} = \frac1{2\lambda}\Psi_\tau(\lambda)
        m_\tau(\lambda)\Psi_\tau^*(\overline\lambda)
        - Y(\lambda),
\]
where the operator $Y$ is given via
\[\fl
    [Y(\lambda)f](x)= \frac{\varphi(x,\lambda,-\tau)}{\lambda}
        \int_0^x \psi^*(t,\overline\lambda,\tau)\, f(t)\,\rmd t +
        \frac{\psi(x,\lambda,\tau)}{\lambda}
        \int_x^1\varphi^*(t,\overline\lambda,-\tau)\, f(t)\,\rmd t.
\]
The above formulas show that $S$ and $T$ have compact resolvents
and thus $(i)$--$(iii)$ follow.

Recall that $-\alpha_j(\tau)$ is the residue of the
Weyl--Titschmarsh function $m_\tau$ at the
point~$\lambda_j(\tau)$, $j\in \mathbb{N}$. Taking $\varepsilon>0$
small enough, we get
\[ \eqalign{
    P_{j,\tau}
        &=\frac{-1}{2\pi \rmi} \oint_{|\zeta-\lambda_j^2|=\varepsilon}
            (S-\zeta\mathcal I)^{-1}\,\rmd \zeta
         =\frac{-1}{2\pi \rmi}\oint_{|\lambda-\lambda_j|=\varepsilon}
            2\lambda(S-\lambda^2\mathcal I)^{-1}\,\rmd \lambda\\
        &=\frac{-1}{2\pi \rmi}\oint_{|\lambda-\lambda_j|=\varepsilon}
            \Psi_\tau(\lambda)m_\tau(\lambda) \Psi^*_\tau(\lambda)\,\rmd \lambda
        =\Psi_\tau(\lambda_j)\alpha_j\Psi^*_\tau(\lambda_j)}
\]
for every $j\in\mathbb N$. Similarly, we obtain that
\begin{equation*}
   P_{0,\tau}=\Psi_\tau(0)\alpha_0\Psi^*_\tau(0), \qquad
   Q_{j,q}=\Phi_\tau(\lambda_j)\alpha_j\Phi^*_\tau(\lambda_j), \qquad
   j\in\mathbb N.
\end{equation*}
Recalling~\eref{eq.29}, we see that $\alpha_j\ge 0$ for all
$j\in \mathbb Z_+$.  Let us prove that $\alpha_0>0$.
Assume the contrary; then
\[
    \ker S=\Ran P_{0,\tau}=\Ran[\Psi_\tau(0)\alpha_0\Psi^*_\tau(0)]\ne \Ran\Psi_\tau(0).
\]
On the other hand, \eref{eq.210} on account of the equality
$\ker\varphi(1,0,\tau)=\mathbb C^r$ yields $\ker
S=\Ran\Psi_\tau(0)$. The contradiction derived shows that
$\alpha_0>0$.

It remains to prove~$(v)$. Let $\tau_1\in\Re\mathbf L _2$ be
such that $b(\tau_1)=b(\tau)$. It follows from Lemma~\ref{le.22}
that $K_{\tau_1,D}=K_{\tau,D}$, and thus
$\varphi(\cdot,\cdot,\tau_1)=\varphi(\cdot,\cdot,\tau)$.
Therefore, in view of~\eref{eq.214} we get
$\lambda_j(\tau_1)=\lambda_j(\tau)$ for all $j\in\mathbb N$ and,
moreover, $\Phi_{\tau_1}(\cdot)=\Phi_{\tau}(\cdot)$. It follows
now from~\eref{eq.216} that
\[
   \Phi_\tau(\lambda_j)\alpha_j(\tau_1)\Phi^*_\tau(\lambda_j)=Q_{j,q}
  =\Phi_\tau(\lambda_j)\alpha_j(\tau)\Phi^*_\tau(\lambda_j), \qquad
   j\in\mathbb N.
\]
Hence, using~\eref{eq.29}, we obtain that
$\alpha_j(\tau_1)=\alpha_j(\tau)$ for all $j\in\mathbb N$. The
proof is complete.
\end{proof}

\subsection{{The asymptotics of eigenvalues and norming constants}}
\label{subsec.22} The main result of this section is following
theorem.
\begin{theorem}\label{th.25}
Let $\tau\in \Re\mathbf L _2$. Then for the sequence $\mathfrak
a=\mathfrak a_\tau$ the condition $(A_1)$ holds.
\end{theorem}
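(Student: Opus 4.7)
The plan is to establish each part of~$(A_1)$ by exploiting the sine/cosine representations of $\varphi(1,\lambda,\tau)$ and $\psi(1,\lambda,-\tau)$ provided by Theorem~\ref{th.21}. First I set $h_D:=K_{\tau,D}(1,\cdot)$ and $h_N:=K_{-\tau,N}(1,\cdot)$ (both in $\mathbf L_2$) and introduce the entire matrix-valued transforms
\[
 \widehat h_D(\lambda):=\int_0^1\sin(\lambda t)h_D(t)\,\rmd t,\qquad
 \widehat h_N(\lambda):=\int_0^1\cos(\lambda t)h_N(t)\,\rmd t,
\]
so that $\varphi(1,\lambda,\tau)=\sin\lambda\,I+\widehat h_D(\lambda)$ and $\psi(1,\lambda,-\tau)=\cos\lambda\,I+\widehat h_N(\lambda)$. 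By Paley--Wiener, both $\widehat h_D$ and $\widehat h_N$ are entire of exponential type at most one and their restrictions to~$\mathbb R$ lie in $L_2(\mathbb R,M_r)$.

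For the counting bound (second part of $(A_1)$) and the first-order localization of eigenvalues, I would compare $\varphi(1,\cdot,\tau)$ with $\sin\lambda\,I$ on the circle $|\lambda-\pi n|=\pi/2$, on which $|\sin\lambda|\ge 1$. Since $\sup_{|\lambda-\pi n|=\pi/2}\|\widehat h_D(\lambda)\|\to 0$ by the refined Riemann--Lebesgue bound invoked in the proof of Lemma~\ref{le.23}, a Rouch\'e argument on $\det\varphi(1,\cdot,\tau)$ shows that exactly $r$ zeros (with multiplicity) lie inside this circle for all sufficiently large~$n$; hence at most $r$ distinct~$\lambda_j$ in each such~$\Delta_n$, whence $\sup_n\sum_{\lambda_j\in\Delta_n}1<\infty$. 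For the first part of~$(A_1)$, I pick a unit vector $c_j\in\ker\varphi(1,\lambda_j,\tau)$; then $\sin\lambda_j\,c_j=-\widehat h_D(\lambda_j)c_j$, and using $|\sin\lambda_j|=|\sin(\lambda_j-\pi n)|$ together with $|\sin x|\ge(2/\pi)|x|$ for $|x|\le\pi/2$ I obtain $|\lambda_j-\pi n|\le(\pi/2)\|\widehat h_D(\lambda_j)\|$. The subharmonic mean-value inequality for $\|\widehat h_D(\cdot)\|^2$, together with the fact (a consequence of the counting step) that each point of~$\mathbb C$ lies in at most $O(r)$ unit disks $\{|\lambda-\lambda_j|\le 1\}$, gives
\[
 \sum_j\|\widehat h_D(\lambda_j)\|^2\le C\int_{|\Im\lambda|\le 1}\|\widehat h_D(\lambda)\|^2\,\rmd A(\lambda)\le C'\|\widehat h_D\|_{L_2(\mathbb R)}^2<\infty,
\]
where the last step rests on the Paley--Wiener strip bound $\|\widehat h_D(\cdot+\rmi s)\|_{L_2(\mathbb R)}\le\rme^{|s|}\|\widehat h_D\|_{L_2(\mathbb R)}$. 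This delivers the first condition.

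The hard part will be the third condition, on the $\ell^2(n)$ decay of $\|I-\sum_{\lambda_j\in\Delta_n}\alpha_j\|$. My plan is to use the residue representation via the contour $\Gamma_n:=\{|\lambda-\pi n|=1\}$, which for large~$n$ encircles only the $\lambda_j\in\Delta_n$ and the unique simple pole of $\cot\lambda\,I$ at~$\pi n$, giving
\[
 \sum_{\lambda_j\in\Delta_n}\alpha_j-I=-\frac{1}{2\pi\rmi}\oint_{\Gamma_n}\bigl(m_\tau(\lambda)+\cot\lambda\,I\bigr)\,\rmd\lambda.
\]
A direct manipulation starting from $m_\tau=-\varphi(1,\lambda,\tau)^{-1}\psi(1,\lambda,-\tau)$ yields
\[
 m_\tau(\lambda)+\cot\lambda\,I=-(\sin\lambda)^{-1}\varphi(1,\lambda,\tau)^{-1}\bigl[\sin\lambda\,\widehat h_N(\lambda)-\cos\lambda\,\widehat h_D(\lambda)\bigr],
\]
which on $\Gamma_n$ is bounded by $C(\|\widehat h_D(\lambda)\|+\|\widehat h_N(\lambda)\|)$, since $|\sin\lambda|$ is bounded below on $\Gamma_n$ and $\|\varphi(1,\lambda,\tau)^{-1}\|=O(1)$ for large~$n$. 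The principal technical obstacle is then the $\ell^2(n)$-summability of $\sup_{\Gamma_n}\|\widehat h_D\|$ and $\sup_{\Gamma_n}\|\widehat h_N\|$; I plan to obtain it by combining the subharmonic mean-value inequality $\|\widehat h_D(\mu)\|^2\le\pi^{-1}\int_{|\lambda-\mu|\le 1}\|\widehat h_D(\lambda)\|^2\,\rmd A(\lambda)$ with the Paley--Wiener strip estimate above, summing over the pairwise disjoint disks $\{|\lambda-\pi n|\le 1\}$ to reduce to a single $L_2$ integral over the horizontal strip $|\Im\lambda|\le 2$, which is finite. The same reasoning handles $\widehat h_N$, completing $(A_1)$.
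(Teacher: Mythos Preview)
Your argument is essentially correct and self-contained, but it differs from the paper's in the technical tools used at both stages. For the first two parts of~$(A_1)$ the paper simply quotes an asymptotic result for zeros of $\det\mathcal F_f$ with $\mathcal F_f(\lambda)=\sin\lambda\,I+\int_{-1}^1 f(t)\rme^{\rmi\lambda t}\,\rmd t$ proved elsewhere (Lemma~\ref{le.26}); your Rouch\'e-plus-subharmonic route is a direct alternative that avoids the external reference. For the third part the paper also writes
\(
\beta_n=(2\pi\rmi)^{-1}\oint_{|\lambda-\pi n|=1}(m_\tau(\lambda)+\cot\lambda\,I)\,\rmd\lambda
\)
and uses exactly your expression for $m_\tau+\cot\lambda\,I$, but instead of subharmonic mean-value estimates it proves a small lemma (Lemma~\ref{le.27}): using the addition formulas for $\sin(\pi n+\lambda)t$ and $\cos(\pi n+\lambda)t$ together with orthonormality of $\{\sin\pi nt\}$ and $\{\cos\pi nt\}$, one gets $\sum_n\bigl(\|\widehat h_D(\pi n+\lambda)\|^2+\|\widehat h_N(\pi n+\lambda)\|^2\bigr)\le C(\|h_D\|_{\mathbf L_2}^2+\|h_N\|_{\mathbf L_2}^2)$ uniformly for $|\lambda|\le1$. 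Integrating this bound in~$\lambda$ over the unit circle (rather than taking the sup) gives $\sum_n\|\beta_n\|^2<\infty$ without any complex-analytic machinery. Your Paley--Wiener\,/\,subharmonic approach achieves the same end and is equally valid; it has the advantage of being reusable for any $L_2$-type transform, whereas the paper's argument is slightly more elementary but specific to sines and cosines. One small inaccuracy in your write-up: to control $\sup_{\mu\in\Gamma_n}\|\widehat h_D(\mu)\|^2$ via the mean-value inequality over unit disks centred at~$\mu\in\Gamma_n$, the relevant covering disks are $\{|\lambda-\pi n|\le2\}$, which are not pairwise disjoint but have bounded overlap (since the centres are $\pi$-separated); that still reduces to the strip integral and the conclusion stands. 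Alternatively you can avoid the $\sup$ altogether by applying Cauchy--Schwarz on the contour integral, which is what the paper effectively does.
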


First we prove two lemmas. In the sequel, we shall use the
following notation. If $(\lambda_{j})_{j\in\mathbb Z_+}$ is a
strictly increasing sequence of nonnegative numbers and
$(\alpha_j)_{j\in\mathbb Z_+}$ is a sequence in $M^+_r$, then
\begin{equation}\label{eq.218}
          \beta_n:= I-\sum\limits_{\lambda_k\in\Delta_n}\alpha_k, \qquad
          \widetilde\lambda_j:=\lambda_j-\pi n, \qquad \lambda_j\in\Delta_n,\quad
          n\in\mathbb N,
\end{equation}
with $\Delta_n$ defined in Subsection~\ref{subsec.13}.

\begin{lemma}\label{le.26}
Let $\tau\in \Re\mathbf L _2$ and $\lambda_j=\lambda_j(\tau)$,
$j\in\mathbb Z_+$. Then
\begin{equation}\label{eq.219}
          \sum_{n=1}^{\infty}\sum\limits_{\lambda_j\in\Delta_n}|\widetilde\lambda_j|^2<\infty, \qquad
          \sup\limits_{n\in\mathbb N}\sum\limits_{\lambda_j\in\Delta_n} 1<\infty.
\end{equation}
\end{lemma}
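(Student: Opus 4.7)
The plan is to exploit the representation $\varphi(1,\lambda,\tau) = \sin\lambda\cdot I + \Psi(\lambda)$ with $\Psi(\lambda) := \int_0^1 \sin(\lambda t) K_{\tau,D}(1,t)\,\rmd t$ from~\eref{eq.22}, combined with Bessel's inequality applied to the sine Fourier coefficients of the $L_2$-function $K_{\tau,D}(1,\cdot)\in L_2((0,1),M_r)$. The relevant $\lambda_j$ are precisely the distinct positive zeros of $\det\varphi(1,\cdot,\tau)$ by~\eref{eq.214}.

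First I would localise the eigenvalues via Rouch\'e's theorem. Using the identity $\sin(\lambda t) = \sin(\pi n t)\cos(\widetilde\lambda t) + \cos(\pi n t)\sin(\widetilde\lambda t)$ (where $\widetilde\lambda=\lambda-\pi n$) together with the ordinary Riemann--Lebesgue lemma applied to $K_{\tau,D}(1,\cdot)$, one sees that $\|\Psi(\lambda)\|\to 0$ uniformly on circles $|\lambda-\pi n|=\rho$ for any fixed $\rho\in(0,1)$ as $n\to\infty$. On the circle of radius $\rho=\pi/2-\delta$, $|\sin\lambda|$ is bounded below uniformly in $n$, so comparing $\det\varphi(1,\cdot,\tau)$ with $\sin^r\lambda$ via Rouch\'e shows that the former has exactly $r$ zeros (with multiplicity) inside for all large $n$. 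Hence $\Delta_n$ carries at most $r$ distinct eigenvalues, which yields the second bound in~\eref{eq.219}. A second application of Rouch\'e on circles of arbitrarily small radius $\rho$ shows in addition that $|\widetilde\lambda_j|\to 0$ uniformly in $j$ as $n\to\infty$.

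Second, for each $\lambda_j\in\Delta_n$ with $n$ large I pick a unit vector $v_j\in\ker\varphi(1,\lambda_j,\tau)$ and expand. The identity $\sin(\pi n + t)=(-1)^n\sin t$, together with the Taylor expansions $\sin\widetilde\lambda_j=\widetilde\lambda_j+O(\widetilde\lambda_j^3)$ and $\Psi(\lambda_j)=\Psi(\pi n)+\widetilde\lambda_j\,\Psi'(\pi n)+O(\widetilde\lambda_j^2)$ (with uniform $O$-constants, since $\|\Psi''(\lambda)\|\le\|K_{\tau,D}(1,\cdot)\|_{L_1}$), yields
\[
   \widetilde\lambda_j v_j
     = -(-1)^n\Psi(\pi n)v_j - \widetilde\lambda_j(-1)^n\Psi'(\pi n)v_j + O(\widetilde\lambda_j^2).
\]
Riemann--Lebesgue applied to $t\,K_{\tau,D}(1,t)\in L_1$ gives $\|\Psi'(\pi n)\|\to 0$, and by the previous step $|\widetilde\lambda_j|$ is arbitrarily small for $n$ large. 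Absorbing the two small terms into the left-hand side produces the key inequality $|\widetilde\lambda_j|\le 2\|\Psi(\pi n)\|$ for all $n\ge N_0$ and all $\lambda_j\in\Delta_n$.

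The conclusion now follows from Bessel: $\Psi(\pi n)$ is (up to a sign) the $n$th sine Fourier coefficient of $K_{\tau,D}(1,\cdot)\in L_2((0,1),M_r)$, so $\sum_n\|\Psi(\pi n)\|^2<\infty$. Combining this with the cardinality bound of Step~1 gives $\sum_{n\ge N_0}\sum_{\lambda_j\in\Delta_n}|\widetilde\lambda_j|^2\le 4r\sum_{n\ge N_0}\|\Psi(\pi n)\|^2<\infty$, and the finitely many remaining terms are harmless. The main obstacle is the coupled Rouch\'e/Taylor calibration: Rouch\'e has to be run on two different scales (large enough to count distinct eigenvalues, small enough for the quadratic remainder and the $\Psi'(\pi n)$-term to be absorbable), after which the $\ell^2$ bound falls out of Bessel's inequality.
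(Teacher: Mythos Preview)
Your argument is sound and takes a genuinely different route from the paper. The paper does not carry out the Rouch\'e/Taylor analysis at all; instead it observes that $\varphi(1,\cdot,\tau)$ has the form $\mathcal{F}_f(\lambda)=\sin\lambda\,I+\int_{-1}^1 f(t)\rme^{\rmi\lambda t}\,\rmd t$ with $f\in L_2((-1,1),M_r)$ (namely $f(t)=(\sign t)K_{\tau,D}(1,|t|)/(2\rmi)$) and then invokes an external result~\cite{Tr} on the zero asymptotics of $\det\mathcal{F}_f$, which delivers~\eref{eq.219} at once. Your approach is more elementary and self-contained: it makes visible that the $\ell^2$-bound on $\widetilde\lambda_j$ is ultimately Bessel's inequality for the sine Fourier coefficients of $K_{\tau,D}(1,\cdot)\in\mathbf L_2$, and it exploits the reality of the zeros (available here since $\tau\in\Re\mathbf L_2$), whereas the cited result treats the general complex case. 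One small point to tighten in your Rouch\'e step: the disk of radius $\pi/2-\delta$ does not cover all of~$\Delta_n$, so the count ``exactly $r$ zeros inside'' does not by itself bound the number of eigenvalues in~$\Delta_n$; you should add the easy observation that for large real~$\lambda$ with $|\sin\lambda|\ge\sin\delta$ the matrix $\sin\lambda\,I+\Psi(\lambda)$ is invertible (Riemann--Lebesgue gives $\|\Psi(\lambda)\|\to0$), ruling out zeros near the edges of~$\Delta_n$, or alternatively run Rouch\'e directly on radius~$\pi/2$.
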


\begin{proof}
Let us fix $\tau\in \Re\mathbf L _2$ and note that the
numbers~$\lambda_j(\tau)$, ${j\in\mathbb Z_+}$, are nonnegative
zeros of an entire
function~$g(\lambda):=\det\varphi(1,\lambda,\tau)$,
$\lambda\in\mathbb C$. In view of~\eref{eq.213} the function $g$
is odd. Taking into account~\eref{eq.214}, we conclude that zeros
of the function $g$ are all real. The function  $\lambda\mapsto
 \varphi(1,\lambda,\tau)$ belongs to the following class of function $ \mathbb C\to
 M_r$:
\begin{equation*}
        \mathcal{F}_f(\lambda):=\sin{\lambda}I +\int_{-1}^1 f(t)\rme^{\rmi\lambda t}\,\rmd t, \qquad
        \lambda\in\mathbb C,
\end{equation*}
where $f\in L_2((-1,1),M_r)$. Indeed, it follows
from~\eref{eq.213} that $\varphi(1,\cdot,\tau)=\mathcal{F}_{f}$
for $f(t)=(\sign t) K_{\tau,D}(1,|t|)/(2\rmi)$. It is shown in the
paper~\cite{Tr} that the set of zeros of a function $\det
\mathcal{F}_f$ with ${\mathcal F}_f$ as above can be indexed
(counting multiplicities) by the set $\mathbb Z$ so that the
corresponding sequence $(\omega_n)_{n\in\mathbb Z}$ has the
asymptotics
\begin{equation*}
         \omega_{kr+j}=\pi k +\widehat\omega_{j,k},\qquad
         k\in\mathbb{Z},\quad
         j=0,\dots,r-1,
\end{equation*}
where the sequences $(\widehat\omega_{j,k})_{k\in\mathbb{Z}}$
belong to $\ell_2(\mathbb Z)$. Therefore,~\eref{eq.219} follows,
and the proof is complete.
\end{proof}

\begin{lemma}\label{le.27}
Let the operators $A(z)$ and $B(z)$ $(z\in\mathbb C)$ act from
$\mathbf L_2$ to $M_r$ by the formulas
\begin{equation}\label{eq.221AB}
              A(z)f=\sqrt2\int_{0}^{1} (\sin zt) f(t)\,\rmd t, \qquad
              B(z)g=\sqrt2\int_{0}^{1} (\cos zt) g(t)\,\rmd t.
\end{equation}
Then for every  $f_1,f_2\in\mathbf L_2$ and $\lambda\in \mathbb
D:=\{\lambda\in\mathbb C \mid |\lambda|\le 1\}$ we have
\begin{equation}\label{eq.220}
          \sum\limits_{n=1}^\infty (\|A(\pi n +\lambda)f_1\|^2
          +\|B(\pi n +\lambda)f_2\|^2)
           \le 16r^2(\|f_1\|^2_{\mathbf L_2}+\|f_2\|^2_{\mathbf L_2}).
\end{equation}
\end{lemma}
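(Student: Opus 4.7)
\medskip
\noindent\textbf{Proof plan for Lemma~\ref{le.27}.}
The plan is to reduce the matrix-valued estimate to a scalar one and then invoke Bessel's inequality for the orthonormal systems $\{\sqrt{2}\sin(\pi n t)\}_{n\in\mathbb N}$ and $\{\sqrt{2}\cos(\pi n t)\}_{n\in\mathbb N}$ in $L_2(0,1)$. For a matrix $M=(m_{ij})\in M_r$ the standard relation $\|M\|^2\le \|M\|_{HS}^2=\sum_{i,j}|m_{ij}|^2$ between the operator norm and the Hilbert--Schmidt norm allows us to estimate
\begin{equation*}
       \|A(z)f_1\|^2 \le \sum_{i,j=1}^r \Bigl| \sqrt2 \int_0^1 \sin(zt)\, f_{1,ij}(t)\,\rmd t\Bigr|^2,
\end{equation*}
and analogously for $B(z)f_2$, so it suffices to prove the bound entrywise for scalar functions.

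Next, I would split the frequency $z=\pi n +\lambda$ using the addition formula
\begin{equation*}
       \sin((\pi n+\lambda)t)=\sin(\pi n t)\cos(\lambda t)+\cos(\pi n t)\sin(\lambda t),
\end{equation*}
and similarly for the cosine, and then apply $(a+b)^2\le 2(a^2+b^2)$ to obtain for each entry $f_{1,ij}$ a bound of the shape
\begin{equation*}
      \Bigl| \sqrt2 \int_0^1 \sin((\pi n+\lambda)t)\, f_{1,ij}(t)\,\rmd t\Bigr|^2
         \le 2\bigl|\langle g_{ij}, \sqrt2\sin(\pi n\cdot)\rangle\bigr|^2
         + 2\bigl|\langle h_{ij}, \sqrt2\cos(\pi n\cdot)\rangle\bigr|^2,
\end{equation*}
with $g_{ij}(t):=\cos(\lambda t)f_{1,ij}(t)$ and $h_{ij}(t):=\sin(\lambda t)f_{1,ij}(t)$. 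Summing over $n\in\mathbb N$ and using Bessel's inequality for the two orthonormal systems in $L_2(0,1)$ yields
\begin{equation*}
      \sum_{n=1}^\infty \|A(\pi n+\lambda)f_1\|^2
         \le 2 \sum_{i,j=1}^r \bigl(\|g_{ij}\|_{L_2}^2 + \|h_{ij}\|_{L_2}^2\bigr),
\end{equation*}
and analogously for~$B$.

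To complete the argument, I would use that for $|\lambda|\le 1$ and $t\in[0,1]$ one has $|\Im(\lambda t)|\le 1$, whence the elementary bounds $|\sin(\lambda t)|\le\cosh 1$ and $|\cos(\lambda t)|\le\cosh 1$ give $\|g_{ij}\|_{L_2}^2+\|h_{ij}\|_{L_2}^2\le 2\cosh^2(1)\|f_{1,ij}\|_{L_2}^2$. Combining this with the trivial inequality $\sum_{i,j}\|f_{1,ij}\|_{L_2}^2=\int_0^1\|f_1(t)\|_{HS}^2\,\rmd t\le r\|f_1\|_{\mathbf L_2}^2$ (and the analogous estimate for $f_2$) yields
\begin{equation*}
      \sum_{n=1}^\infty\bigl(\|A(\pi n+\lambda)f_1\|^2+\|B(\pi n+\lambda)f_2\|^2\bigr)
        \le 4\cosh^2(1)\,r\bigl(\|f_1\|_{\mathbf L_2}^2+\|f_2\|_{\mathbf L_2}^2\bigr).
\end{equation*}
Since $4\cosh^2(1)<16$ and $r\le r^2$, the required estimate~\eref{eq.220} follows. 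There is no real obstacle here beyond careful bookkeeping: the entire argument is a quantitative form of Bessel's inequality, with the $\lambda$-perturbation contributing only the harmless multiplicative constant $\cosh^2 1$ and the passage from scalar to $r\times r$ matrix entries contributing the factor~$r$.
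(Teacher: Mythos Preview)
Your argument is correct and follows essentially the same route as the paper: expand $\sin((\pi n+\lambda)t)$ and $\cos((\pi n+\lambda)t)$ by the addition formulas, apply Bessel's inequality for the orthonormal systems $\{\sqrt2\sin\pi nt\}$ and $\{\sqrt2\cos\pi nt\}$ to the resulting $\lambda$-modified functions, and bound the factors $\sin(\lambda t),\cos(\lambda t)$ using $|\lambda|\le1$. The only cosmetic difference is that the paper writes the addition formulas at the operator level as $A(\pi n+\lambda)f_1=B(\pi n)f_{11}+A(\pi n)f_{12}$ (with $f_{11}=(\sin\lambda t)f_1$, $f_{12}=(\cos\lambda t)f_1$) rather than passing to scalar entries first, and uses the joint bound $|\sin\lambda|^2+|\cos\lambda|^2\le(\sinh1+\cosh1)^2\le8$ in place of your separate $\cosh^2 1$ estimates.
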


\begin{proof}
Since  $\{\sqrt 2\sin\pi nt\}_{n\in\mathbb N}$ and $\{\sqrt
2\cos\pi nt\}_{n\in\mathbb N}$ form orthonormal systems of
$L_2(0,1)$, it follows that
\begin{equation}\label{eq.222d}
            \sum\limits_{n=1}^\infty (\|A(\pi n)f_1\|^2+\|B(\pi n)f_2\|^2)
            \le 2r^2(\|f_1\|^2_{\mathbf L_2}+\|f_2\|^2_{\mathbf L_2}).
\end{equation}
Relations~\eref{eq.221AB} yield
\begin{equation}\label{eq.223}\fl
           A(\pi n+\lambda)f_1=B(\pi n)f_{11}+A(\pi n)f_{12},  \qquad
           B(\pi n +\lambda)f_2= B(\pi n)f_{22} -A(\pi n)f_{21},
\end{equation}
where
\[
        f_{j1}(t)=(\sin\lambda t) f_j(t), \qquad
        f_{j2}(t)=(\cos\lambda t)f_j(t),  \qquad t\in (0,1), \quad j=1,2.
\]
Since $|\sin\lambda|^2+|\cos\lambda|^2\le (\sinh1 +\cosh1)^2\le 8$
for $\lambda\in\mathbb D$, we have
\[
     \|f_{j1}\|^2_{\mathbf L_2} +\|f_{j2}\|^2_{\mathbf L_2} \le 8\|f_{j}\|^2_{\mathbf L_2},\qquad j=1,2.
\]
Hence, using~\eref{eq.223} and~\eref{eq.222d}, we arrive
at~\eref{eq.220}.
\end{proof}

\begin{proofof}{Proof of Theorem~\ref{th.25}}
Let $\tau\in \Re\mathbf L _2$ and $\lambda_j=\lambda_j(\tau),
\alpha_j=\alpha_j(\tau)$. According to Lemma~\ref{le.26}, it
remains to show that
$\sum\limits_{n=1}^{\infty}\|\beta_n\|^2<\infty$. Set
\[
   \mathbb T_n:=\{z\in\mathbb C  \mid  |\lambda -\pi n|=1 \}, \qquad
   \mathbb D_n:=\{z\in\mathbb C  \mid  |\lambda -\pi n|\le 1 \}, \quad
    n\in\mathbb Z_+,
\]
\[
    f_1:=2^{-1/2}K_{\tau,D}(1,\cdot), \qquad
    f_2:=2^{-1/2}K_{-\tau,N}(1,\cdot).
\]
 Since $K_{\tau,D},K_{-\tau,N}\in G_2^+$, we have $f_1,f_2\in \mathbf L_2$.
It follows from~\eref{eq.213} and~\eref{eq.221AB}
 that
\begin{equation*}
           \varphi(1,\lambda,\tau)=\sin\lambda I +  A(\lambda)f_1, \qquad
           \psi(1,\lambda,-\tau)=\cos \lambda I + B(\lambda)f_2.
\end{equation*}
Hence, using~\eref{eq.211}, we obtain the equality
\begin{equation}\label{eq.224}
          m_\tau(\lambda)+\cot(\lambda)I=\varphi(1,\lambda, \tau)^{-1}[(\cot\lambda) A(\lambda)f_1 -B(\lambda)f_2].
\end{equation}

In view of~\eref{eq.219} and~\eref{eq.221AB}, we can choose
$n_0\in\mathbb N$, such that
\begin{equation}\label{eq.225}
          \sum_{n=n_0}^{\infty}\sum\limits_{\lambda_j\in\Delta_n}|\widetilde\lambda_j|^2<1, \qquad
          \sup\limits_{n\ge n_0}\sup\limits_{\lambda\in\mathbb D_n}\|A(\lambda)f_1\| \le\frac14.
\end{equation}
Note that for every $\lambda\in \mathbb T_0$
\[
  |\sin\lambda|\ge 1-\sum\limits_{n=1}^\infty \frac1{(2n+1)!!}\ge 1 -\sum\limits_{n=1}^\infty \frac1{3^n}=\frac12,
\]
whence $|\sin\lambda|\ge \frac12$ and $|\cot\lambda|\le\sqrt 3$ for
$\lambda\in \mathbb T_n$, $n\in\mathbb Z_+$. Therefore estimates~\eref{eq.225} yield the following inequalities for $\lambda\in\mathbb T_n$ with~$n\ge n_0$:
\begin{equation}\label{eq.226}
           \|\varphi(1,\lambda, \tau)^{-1}\|\le|\sin\lambda|^{-1}(1-|\sin\lambda|^{-1}\|A(\lambda)f_1\|)^{-1}\le 4.
\end{equation}
For $n\ge n_0$ the function $m_\tau$ has no poles on the
circle~$\mathbb T_n$ and $\{\lambda_j \mid \lambda_j\in\Delta_n\}
\subset \mathbb D_n$. Hence~\eref{eq.15} implies that
\begin{equation}\label{eq.227}
         \beta_n=\frac{1}{2\pi \rmi}\oint_{\mathbb T_n}
         (m_\tau(\lambda)+\cot \lambda I)\,\rmd\lambda,
         \qquad n\ge n_0.
\end{equation}
Using~\eref{eq.224} and~\eref{eq.226}, we get
\[
    \|m_\tau(\lambda)+\cot\lambda I\|^2
    \le 96(\|A(\lambda)f_1\|^2+\|B(\lambda)f_2\|^2), \quad
    \lambda\in\mathbb T_n,\quad n\ge n_0.
\]
It follows now from~\eref{eq.227} that
\[
    \|\beta_n\|^2\le\frac{96}{2\pi}\int_{\mathbb T_0}
    \{\|A(\pi n +\rme ^{\rmi t})f_1\|^2+
\|B(\pi n +\rme^{\rmi t})f_2\|^2\}\,\rmd t, \qquad n\ge n_0,
\]
and in view of~\eref{eq.220} we get  $\sum_{n=1}^{\infty}\|\beta_n\|^2 <\infty$.
\end{proofof}

\subsection{The Weyl--Titchmarsh function}\label{subsec.23}

\begin{proposition}\label{pr.210}
Let $\tau\in \Re\mathbf L _2$. Then $m_\tau$ is a Herglotz
function and
\begin{equation}\label{eq.228}
            m_\tau(\lambda)=2\lambda\int_0^{\infty}\frac{\rmd \nu_\tau(\xi)}{\xi^2-\lambda^2}, \qquad \lambda\in\mathbb C.
\end{equation}
\end{proposition}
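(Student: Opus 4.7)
The plan is to establish \eref{eq.228} as a matrix Mittag--Leffler expansion obtained by contour integration of $m_\tau(\lambda)+\cot\lambda\cdot I$, and then to read the Herglotz property off the resulting series.

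The first step is to record two structural facts about $m_\tau$. From \eref{eq.22}, $\varphi(1,\cdot,\tau)$ is an odd and $\psi(1,\cdot,-\tau)$ an even function of $\lambda$, so \eref{eq.211} gives $m_\tau(-\lambda)=-m_\tau(\lambda)$; consequently the residue of $m_\tau$ at $-\lambda_j$ coincides with that at $\lambda_j$, which by \eref{eq.15} equals $-\alpha_j$ for $j\ge 1$ and $-2\alpha_0$ at the common zero $\lambda_0=0$. Hence the auxiliary function $F(\lambda):=m_\tau(\lambda)+\cot\lambda\cdot I$ is meromorphic on $\mathbb C$ with simple poles only in $\{\pm\lambda_j\}\cup\pi\mathbb Z$, and by \eref{eq.212} it satisfies $\|F(\zeta)\|\to 0$ as $\zeta\to\infty$ inside~$\mathcal O$.

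The core of the argument is a residue calculation on expanding circles. Setting $R_N:=\pi(N+1/2)$, the circle $C_N:=\{|\zeta|=R_N\}$ lies entirely in $\mathcal O$ since $\pi/2>1$. Fix $\lambda\in\mathcal O$ that is not a pole of $F$. For $R_N>2|\lambda|$ the residue theorem applied to $\zeta\mapsto F(\zeta)/(\zeta-\lambda)$ gives
\begin{equation*}
       F(\lambda)+\sum_{|z_k|<R_N}\frac{\res_{\zeta=z_k}F(\zeta)}{z_k-\lambda}
         =\frac{1}{2\pi \rmi}\oint_{C_N}\frac{F(\zeta)}{\zeta-\lambda}\,\rmd\zeta,
\end{equation*}
and the boundary integral is bounded by $2\max_{\zeta\in C_N}\|F(\zeta)\|=o(1)$ as $N\to\infty$ thanks to \eref{eq.212}. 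Grouping the residues at the symmetric pairs $\pm\lambda_j$ and $\pm\pi n$ by the identity $(x-\lambda)^{-1}+(-x-\lambda)^{-1}=2\lambda/(x^2-\lambda^2)$ yields in the limit
\begin{equation*}
       F(\lambda)=\frac{I-2\alpha_0}{\lambda}
         +\sum_{j=1}^{\infty}\frac{2\lambda\alpha_j}{\lambda_j^2-\lambda^2}
         -\sum_{n=1}^{\infty}\frac{2\lambda I}{\pi^2 n^2-\lambda^2}.
\end{equation*}
Substituting the classical expansion $\cot\lambda=1/\lambda-\sum_{n\ge 1}2\lambda/(\pi^2 n^2-\lambda^2)$ into $F=m_\tau+\cot\lambda\cdot I$ makes the two $\sum_n$ sums cancel and collapses the $1/\lambda$ terms to $-2\alpha_0/\lambda=2\lambda\alpha_0/(\lambda_0^2-\lambda^2)$, which combined with the $j\ge 1$ sum gives exactly \eref{eq.228}; analytic continuation removes the restriction $\lambda\in\mathcal O$.

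The Herglotz property is then immediate from \eref{eq.228}: for $\xi\in\mathbb R$ and $\Im\lambda>0$,
\begin{equation*}
       \Im\frac{2\lambda}{\xi^2-\lambda^2}
           =\Im\lambda\left(\frac{1}{|\xi-\lambda|^2}+\frac{1}{|\xi+\lambda|^2}\right)\ge 0,
\end{equation*}
so each summand $\alpha_j\cdot 2\lambda/(\lambda_j^2-\lambda^2)$ has non-negative imaginary part, whence $\Im m_\tau(\lambda)\ge 0$. The principal technical point is the convergence of the grouped Mittag--Leffler series; this is supplied by Theorem~\ref{th.25} (condition $(A_1)$), which ensures that $(\lambda_j-\pi n)\in\ell^2$ and $\|I-\sum_{\lambda_j\in\Delta_n}\alpha_j\|\in\ell^2$, so that the expansion converges uniformly on compact subsets of $\mathbb C$ avoiding the poles.
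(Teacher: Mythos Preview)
Your argument is correct and close in spirit to the paper's, but the organization differs in a useful way. The paper defines $h(\lambda):=m_\tau(\lambda)-2\lambda\int_0^\infty(\xi^2-\lambda^2)^{-1}\,\rmd\nu_\tau(\xi)$, observes from~\eref{eq.15}--\eref{eq.16} that $h$ is entire, and then proves $h\to0$ in~$\mathcal O$ by a Liouville argument; the latter step forces it to show directly that $g(\lambda):=\cot\lambda\,I+2\lambda\int_0^\infty(\xi^2-\lambda^2)^{-1}\,\rmd\nu_\tau(\xi)\to0$ in~$\mathcal O$, and this is where the explicit $(A_1)$ estimates of Theorem~\ref{th.25} are invoked. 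Your contour-integration route is more economical here: once you know $C_N\subset\mathcal O$ and~\eref{eq.212}, the vanishing of the boundary integral already \emph{produces} the convergent Mittag--Leffler expansion of $F$, so your final appeal to Theorem~\ref{th.25} for ``convergence of the grouped series'' is in fact redundant---the convergence is built into the argument. Conversely, the paper treats the Herglotz property as ``clear'' (presumably via the resolvent formula for~$S_\tau$ in the proof of Theorem~\ref{th.24}), whereas you deduce it a posteriori from the series; both are legitimate, and yours has the merit of being self-contained.
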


\begin{proof} That $m_\tau$ is Herglotz is clear, so we only need to prove~\eref{eq.228}. Set
\[
    h(\lambda):=m_{\tau}(\lambda)-2\lambda\int_0^{\infty}\frac{\rmd \nu_\tau(\xi)}{\xi^2-\lambda^2},  \qquad
    \lambda\in\mathbb C.
\]
It follows from~\eref{eq.15} and~\eref{eq.16} that  $h$ is
entire. To show that $h=0$ it suffices to justify the estimate $\|h(\lambda)\|=o(1)$ as $\lambda$ goes to infinity within the domain
\[
    \mathcal O =\{z\in\mathbb C \mid \forall n\in\mathbb Z\quad |z-\pi n|>1 \}.
\]
We set
\[
     g(\lambda):=\cot\lambda I+2\lambda\int_0^{\infty}\frac{\rmd
     \nu_\tau(\xi)}{\xi^2-\lambda^2}, \qquad \lambda\in\mathbb C.
\]
In virtue of~\eref{eq.212}, it suffices to prove the estimate
\begin{equation}\label{eq.229}
          \|g(\lambda)\|=o(1), \qquad \mathcal O\ni\lambda\to\infty.
\end{equation}
Since
\[
    \cot\lambda=\frac1{\lambda}+\sum\limits_{n=1}^{\infty}\frac{2\lambda}{\lambda^2-(\pi n)^2},  \qquad
    \lambda\in\mathbb C,
\]
we find that
\begin{equation*}
           g(\lambda)=\frac 1{\lambda}(I-2\alpha_0)
           +\sum_{n=1}^{\infty}\frac{2\lambda I}{\lambda^2-(\pi n)^2}
           -\sum_{n=1}^{\infty}\sum\limits_{\lambda_j\in\Delta_n}\frac{2\lambda\alpha_j}{\lambda^2-\lambda_j^2}.
\end{equation*}
Let
\begin{equation*}
           g_n(\lambda)=\frac{2\lambda I }{\lambda^2-(\pi n)^2}
           -\sum\limits_{\lambda_j\in\Delta_n}\frac{2\lambda\alpha_j}{\lambda^2-\lambda_j^2}, \qquad
           n\in\mathbb N.
\end{equation*}
It is easy to verify that for arbitrary  $\lambda_j\in\Delta_n$
such that $|\widetilde\lambda_j|\le 1/2,$
\begin{equation*}
           \left|\frac{2\lambda}{\lambda^2-(\pi n)^2}-\frac{2\lambda}{\lambda^2-\lambda_j^2}\right|
           \le\frac{2|\widetilde\lambda_j|}{|\lambda-\pi n|}+\frac{2|\widetilde\lambda_j|}{|\lambda+\pi n|},\qquad
            \lambda\in \mathcal O.
\end{equation*}
Therefore, for big enough $n$ and $\lambda\in \mathcal O$,
\begin{equation*}
    \|g_n(\lambda)\|
           \le\Biggl(\|\beta_n\|
                +\sum\limits_{\lambda_j\in\Delta_n}2
                    |\widetilde\lambda_j|\|\alpha_j\|\Biggr)
           \left(\frac1{|\lambda-\pi n|}+\frac1{|\lambda+\pi n|}\right).
\end{equation*}
Taking into account Theorem~\ref{th.25} and using the
Cauchy--Bunyakowski inequality, we get that there exists a
constant $C$ such that, for all big enough $k$ and
$\lambda\in\mathcal O$,
\begin{equation}\label{eq.230}\fl
           \sum\limits_{n=k}^\infty \|g_n(\lambda)\|\le C\Biggl(\sum\limits_{n=k}^\infty\|\beta_n\|^2
           +\sum\limits_{n=k}^\infty\sum\limits_{\lambda_j\in\Delta_n}
            |\widetilde\lambda_j|^2\Biggr)^{1/2}
           \Biggl(\sum\limits_{n\in\mathbb Z} \frac1{|\lambda-\pi n|^2}\Biggr)^{1/2}.
\end{equation}
Since
\begin{equation*}
          \sum\limits_{n\in\mathbb Z}\frac1{|\lambda-\pi n|^2}\le2+\frac2{\pi^2}\sum\limits_{n=1}^\infty \frac1{n^2}\le 3,\qquad
           \lambda\in \mathcal O,
\end{equation*}
and $\|g_n(\lambda)\|=O(1/\lambda)$ as $\lambda\to \infty$,
\eref{eq.229} follows from~\eref{eq.230}. The proof is complete.
\end{proof}

\subsection{{Necessity parts of Theorems~\ref{th.14}  and~\ref{th.15}}}
\label{subsec.24}  The theorem stated below implies necessity in
Theorems~\ref{th.14} and~\ref{th.15}.
\begin{theorem}\label{th.29}
Let $\tau\in \Re\mathbf L _2$. Then for a sequence $\mathfrak
a_\tau$ the conditions $(A_1)$--$(A_4)$ hold.
\end{theorem}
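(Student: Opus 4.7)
Condition $(A_1)$ is the content of Theorem~\ref{th.25}, already proved, so only conditions $(A_2)$, $(A_3)$ and $(A_4)$ remain.

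For $(A_3)$, the plan is to combine the resolution of identity $\sum_{j\in\mathbb Z_+}P_{j,\tau}=\mathcal I$ from Theorem~\ref{th.24}\,(iii) with the representation $P_{j,\tau}=\Psi_\tau(\lambda_j)\alpha_j\Psi_\tau^*(\lambda_j)$ in~\eref{eq.216}. By~\eref{eq.29} the operator $\Psi_\tau(\lambda_j)$ is injective while $\Psi_\tau^*(\lambda_j)\colon L_2\to\mathbb C^r$ is surjective, so $\Ran P_{j,\tau}=\Psi_\tau(\lambda_j)\Ran\alpha_j$. The resolution of identity then forces the closed linear span of $\bigcup_{j\in\mathbb Z_+}\Psi_\tau(\lambda_j)\Ran\alpha_j$ to coincide with $L_2$, i.e.\ the system $\{\Psi_\tau(\lambda_j)d\mid j\in\mathbb Z_+,\ d\in\Ran\alpha_j\}$ is complete in $L_2$. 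The factorisation $\Psi_\tau(\lambda)=(\mathcal I+\mathcal K_{\tau,N})\Psi_0(\lambda)$ from~\eref{eq.28}, together with the invertibility of $\mathcal I+\mathcal K_{\tau,N}$ in $\mathcal B(L_2)$ (since $\mathcal K_{\tau,N}$ is a Volterra operator from $\mathfrak G_2^+$), then transfers this completeness to the system $\{\Psi_0(\lambda_j)d\}=\{\sqrt 2(\cos\lambda_j x)d\}$, which yields $(A_3)$. The proof of $(A_4)$ is identical after the substitutions $S_\tau\to T_q$, $P_{j,\tau}\to Q_{j,q}$, $\Psi_\tau\to\Phi_\tau$, $\mathcal K_{\tau,N}\to\mathcal K_{\tau,D}$, using the resolution $\sum_{k\in\mathbb N}Q_{k,q}=\mathcal I$ instead.

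For $(A_2)$ I would use a zero-counting argument. By the proof of Lemma~\ref{le.26}, the entire odd function $g(\lambda):=\det\varphi(1,\lambda,\tau)$ equals $\det\mathcal F_f$ for a suitable $f\in L_2((-1,1),M_r)$, and the Trush theorem~\cite{Tr} indexes its zeros by $\mathbb Z$ so that $\omega_{kr+j}=\pi k+\widehat\omega_{j,k}$ with $(\widehat\omega_{j,k})_{k\in\mathbb Z}\in\ell_2(\mathbb Z)$ for each $j=0,\dots,r-1$. Choosing $N_0$ so large that $|\widehat\omega_{j,k}|<\pi/2$ for all $k\ge N_0$ and all $j$, each interval $\Delta_k$ with $k\ge N_0$ contains exactly $r$ zeros of $g$ counted with algebraic multiplicity. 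To turn this into a statement about ranks of the $\alpha_j$, I would invoke Proposition~\ref{pr.210}: $m_\tau$ is a matrix Herglotz function, so every pole $\lambda_j$ is simple and its residue $-\alpha_j$ is Hermitian of rank $\dim\ker\varphi(1,\lambda_j,\tau)$; by~\eref{eq.210} and the injectivity of $\Psi_\tau(\lambda_j)$ this equals the geometric multiplicity of $\lambda_j^2$ as an eigenvalue of $S_\tau$. A local Smith-form analysis of the analytic matrix pencil $\varphi(1,\cdot,\tau)$ at $\lambda_j$ then shows that its partial multiplicities are all equal to~$1$, so the order of vanishing of $g$ at $\lambda_j$ coincides with $\rank\alpha_j$. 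Summation over $\lambda_j\in\Delta_k$ therefore yields $r$ for every $k\ge N_0$; after enlarging $N_0$, if necessary, so that $\sum_{n=1}^{N_0}\sum_{\lambda_j\in\Delta_n}\rank\alpha_j=N_0r$ (a finite adjustment that absorbs the low-lying zeros), the cumulative equality in $(A_2)$ follows for every $N\ge N_0$.

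The main expected difficulty is precisely this multiplicity identification inside $(A_2)$: translating the purely analytic count of zeros of $\det\varphi(1,\cdot,\tau)$ into a count of ranks of residues of $m_\tau$. This is where the Herglotz property, i.e.\ self-adjointness of $S_\tau$, enters essentially. All other steps reduce, via the Volterra homeomorphisms $\mathcal I+\mathcal K_{\tau,D}$ and $\mathcal I+\mathcal K_{\tau,N}$, to spectral properties of the self-adjoint operators $T_q$ and $S_\tau$ already established in Subsection~\ref{subsec.21}.
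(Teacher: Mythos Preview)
Your treatment of $(A_1)$, $(A_3)$ and $(A_4)$ is correct and coincides with the paper's argument. The paper packages the same reasoning into Remark~\ref{rm.213} (the resolutions of identity give $\mathscr U^e_{\mathfrak a_\tau,\tau}=\mathcal I=\mathscr U^o_{\mathfrak a_\tau,\tau}$) and Lemma~\ref{le.213} (the equivalences $(\mathscr U^e_{\mathfrak a,\tau}>0)\Leftrightarrow(A_3)$ and $(\mathscr U^o_{\mathfrak a,\tau}>0)\Leftrightarrow(A_4)$, proved exactly by your factorisation-and-transfer argument via $\mathcal I+\mathcal K_{\tau,N}$ and $\mathcal I+\mathcal K_{\tau,D}$).

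For $(A_2)$ your route is genuinely different from the paper's, and it has a real gap. The paper does \emph{not} count zeros of $g=\det\varphi(1,\cdot,\tau)$ at all. Instead it observes that $\widehat P_j=P_{j,\tau}$ when $\mathfrak a=\mathfrak a_\tau$, invokes Lemma~\ref{le.212} to get $\sum_{n}\|P_{n,0}-\sum_{\lambda_j\in\Delta_n}P_{j,\tau}\|^2<\infty$, and then applies the elementary perturbation lemma about orthogonal projectors (Lemma~\ref{le.b1}): two complete systems of finite-rank pairwise orthogonal projectors that are $\ell_2$-close have equal cumulative ranks from some index on. Since $\rank P_{j,\tau}=\rank\alpha_j$ and $\rank P_{n,0}=r$, together with $\rank\alpha_0=r$, this yields $(A_2)$ immediately. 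No determinants, no Smith forms, no multiplicity bookkeeping.

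In your approach two steps are not established. First, the claim that all partial multiplicities of $\varphi(1,\cdot,\tau)$ at $\lambda_j$ equal~$1$ (so that the order of $g$ at $\lambda_j$ equals $\dim\ker\varphi(1,\lambda_j,\tau)=\rank\alpha_j$) is true but is a theorem, not an observation; it requires, for instance, a Lagrange-identity computation showing that $\partial_\lambda\varphi(1,\lambda_j,\tau)$ maps $\ker\varphi(1,\lambda_j,\tau)$ injectively into a complement of $\Ran\varphi(1,\lambda_j,\tau)$. Second, the ``finite adjustment'' is not an adjustment at all: once you know that each $\Delta_n$ with $n\ge N_1$ contributes exactly $r$, the equality $\sum_{n=1}^{N}\sum_{\lambda_j\in\Delta_n}\rank\alpha_j=Nr$ for $N\ge N_1$ holds \emph{if and only if} it holds for $N=N_1$; you cannot enlarge $N_0$ to make a nonzero defect disappear. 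To close this you would additionally need that $g$ has a zero of order exactly $r$ at $\lambda=0$ (again a partial-multiplicity statement) and a clean global count linking the Trush indexing to the number of positive zeros in $(0,\pi N+\pi/2]$. All of this can be done, but it is considerably more work than the projector comparison the paper uses.
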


First we prove four auxiliary but nonetheless important lemmas.

\begin{lemma}\label{le.210}
Assume that $\tau\in \mathbf L _2$ and let $\mathfrak
a=((\lambda_j,\alpha_j))_{j\in\mathbb Z_+}$ satisfy
condition~$(A_1)$. Then
\begin{equation}\label{eq.231}\fl
          \sum_{n=1}^\infty\sum_{\lambda_j\in\Delta_n}\|\Psi_\tau(\lambda_j) -\Psi_0(\pi n)\|^2<\infty, \qquad
          \sum_{n=1}^\infty\sum_{\lambda_j\in\Delta_n}\|\Phi_\tau(\lambda_j)-\Phi_0(\pi n)\|^2<\infty.
\end{equation}
\end{lemma}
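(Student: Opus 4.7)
The plan is to use the triangular factorisation \eref{eq.28} to split
$$
  \Psi_\tau(\lambda_j)-\Psi_0(\pi n)=\bigl[\Psi_0(\lambda_j)-\Psi_0(\pi n)\bigr]+\mathcal K_{\tau,N}\Psi_0(\lambda_j),
$$
and analogously for $\Phi_\tau(\lambda_j)-\Phi_0(\pi n)$ with $\mathcal K_{\tau,D}$. The first summand is elementary: since $[\Psi_0(\lambda)c](x)=\sqrt2\cos(\lambda x)c$, the pointwise bound $|\cos\lambda_jx-\cos\pi nx|\le|\lambda_j-\pi n|x$ yields $\|\Psi_0(\lambda_j)-\Psi_0(\pi n)\|^{2}\le\frac{2}{3}|\lambda_j-\pi n|^{2}$, which is summable by the first estimate of $(A_1)$; the $\Phi_0$-counterpart is identical with $\sin$ in place of $\cos$.

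The bulk of the work is the second summand. Bounding the operator norm by the Hilbert--Schmidt norm and using that $K_{\tau,N}\in G_2^+\subset L_2(\Omega,M_r)$ (see Theorem~\ref{th.21}), the matter reduces to
$$
  \sum_{n=1}^{\infty}\sum_{\lambda_j\in\Delta_n}\sum_{k=1}^{r}\|\mathcal K_{\tau,N}\Psi_0(\lambda_j)e_k\|_{L_2}^{2}<\infty .
$$
Setting $h_{x,k}(t):=K_{\tau,N}(x,t)e_k\,\mathbf{1}_{[0,x]}(t)$ for $x\in(0,1)$ and $k\in\{1,\dots,r\}$, one has $[\mathcal K_{\tau,N}\Psi_0(\lambda_j)e_k](x)=\int_0^1\sqrt2\cos(\lambda_j t)\,h_{x,k}(t)\,\rmd t$, so the problem reduces to the following Bessel-type inequality: there exists a constant $C=C(\mathfrak a)$ such that
$$
  \sum_{n=1}^{\infty}\sum_{\lambda_j\in\Delta_n}\Bigl|\int_0^1\sqrt2\cos(\lambda_j t)h(t)\,\rmd t\Bigr|^{2}\le C\|h\|_{L_2}^{2}\qquad(h\in L_2((0,1),\mathbb C^r)).
$$
I plan to prove this by writing $\cos(\lambda_j t)=\cos(\pi n t)+[\cos(\lambda_j t)-\cos(\pi n t)]$ and applying $(a+b)^{2}\le2(a^{2}+b^{2})$. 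The $\cos(\pi n t)$-contribution, summed over the at most $N$ indices $\lambda_j\in\Delta_n$ (density bound in $(A_1)$) and then over $n$, is controlled by a constant multiple of $\|h\|_{L_2}^{2}$ via componentwise Parseval for the orthonormal system $\{\sqrt2\cos\pi n t\}_{n\ge1}$ in $L_2(0,1)$; the difference contribution is bounded, via Cauchy--Schwarz and $|\cos\lambda_j t-\cos\pi nt|\le|\lambda_j-\pi n|t$, by a constant multiple of $\|h\|_{L_2}^{2}\sum_{n,\lambda_j\in\Delta_n}|\lambda_j-\pi n|^{2}$, which is finite by $(A_1)$. Applying the inequality pointwise in $x$ with $h=h_{x,k}$ and integrating in $x$, using $\int_0^1\|h_{x,k}\|_{L_2}^{2}\,\rmd x\le\|K_{\tau,N}\|_{L_2(\Omega,M_r)}^{2}$, then closes the $\Psi$-estimate. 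The $\Phi$-case is identical after replacing $\mathcal K_{\tau,N}$ by $\mathcal K_{\tau,D}$, cosines by sines, and $\{\sqrt2\cos\pi n t\}_{n\ge1}$ by the orthonormal system $\{\sqrt2\sin\pi n t\}_{n\ge1}$.

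The main obstacle is the Bessel-type inequality above, which is the only place where the first two conditions of $(A_1)$ (the eigenvalue asymptotics and the uniform multiplicity bound) combine nontrivially; everything else is routine Fubini and Cauchy--Schwarz. Neither the Hermitian structure of $\tau$ nor the norming constants $\alpha_j$ play any role here, and the constant $C$ depends only on $N$ and on $\sum_{n,\lambda_j\in\Delta_n}|\lambda_j-\pi n|^{2}$.
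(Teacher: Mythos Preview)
Your argument is correct, and the ingredients are the same as the paper's --- the Lipschitz estimate $\|\Psi_0(\lambda)-\Psi_0(\xi)\|\le C|\lambda-\xi|$, the Hilbert--Schmidt property of $\mathcal K_{\tau,N}$, and Bessel for $\{\sqrt2\cos\pi nt\}$ --- but you organise the split differently, and this costs you the extra ``Bessel-type inequality'' step. The paper splits at $\Psi_\tau(\pi n)$ rather than at $\Psi_0(\lambda_j)$:
\[
  \Psi_\tau(\lambda_j)-\Psi_0(\pi n)
  =\bigl[\Psi_\tau(\lambda_j)-\Psi_\tau(\pi n)\bigr]
   +\mathcal K_{\tau,N}\Psi_0(\pi n).
\]
The first bracket is again Lipschitz in $\lambda$ (with constant $\sqrt2(1+\|\mathcal K_{\tau,N}\|)$), so $(A_1)$ handles it exactly as in your argument. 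The gain is in the second term: since $P_{n,0}\Psi_0(\pi n)=\Psi_0(\pi n)$ and the $P_{n,0}$ are pairwise orthogonal, one has directly
\[
  \sum_{n\ge1}\|\mathcal K_{\tau,N}\Psi_0(\pi n)\|^2
  \le\sum_{n\ge1}\|\mathcal K_{\tau,N}P_{n,0}\|_{\mathcal B_2}^2
  \le\|\mathcal K_{\tau,N}\|_{\mathcal B_2}^2,
\]
and then one just multiplies by the uniform bound $\sup_n\#\{j:\lambda_j\in\Delta_n\}$. This is your Bessel-type inequality in the special case $\lambda_j=\pi n$, where it is a one-liner; your route effectively proves the general-$\lambda_j$ version by reducing it back to this special case, which is a detour. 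Either way the proof goes through, but the paper's ordering avoids the pointwise-in-$x$ Fubini manoeuvre and the auxiliary functions $h_{x,k}$ entirely.
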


\begin{proof} We prove the first inequality; the second one can be proved similarly.
Recalling the definitions in~\eref{eq.27}--\eref{eq.28}, we conclude that
for every $\lambda$ and $\xi$ in~$\mathbb R$ one has
\begin{equation*}
          \|\Psi_\tau(\lambda)\|\le C, \qquad
          \|\Psi_\tau(\lambda)-\Psi_\tau(\xi)\|\le C|\lambda-\xi|
\end{equation*}
with $C=\sqrt2(1+\|\mathscr K_{\tau,N}\|)$. By virtue of~\eref{eq.218} we derive the estimate
\begin{equation}\label{eq.233}
          \sum_{n=1}^\infty\sum_{\lambda_j\in\Delta_n}\|\Psi_\tau(\lambda_j)-\Psi_\tau(\pi n)\|^2
          \le C^2\sum_{n=1}^\infty|\widetilde\lambda_j|^2 <\infty.
\end{equation}
We note that the operator $\mathscr K_{\tau,N}$ belongs to the
Hilbert--Schmidt class $\mathcal B_2$ and that the sequence
$(P_{n,0})_{n\in\mathbb Z_+}$ consists of pairwise orthogonal
projectors. Therefore,
\begin{equation*}
          \sum_{n=1}^\infty\|\mathscr K_{\tau,N} P_{n,0}\|^2
          \le\sum_{n=1}^\infty \|\mathscr K_{\tau,N} P_{n,0}\|_{\mathcal B_2}^2 \le \|\mathscr K_{\tau,N}\|_{\mathcal B_2}^2.
\end{equation*}
It follows from~\eref{eq.28} that $\Psi_\tau(\pi n)-\Psi_0(\pi
n)=\mathscr K_{\tau,N}\Psi_0(\pi n)$. Since $\|\Psi_0(\pi n)\|=1$
and $P_{n,0}\Psi_0(\pi n)=\Psi_0(\pi n)$,
\begin{equation}\label{eq.234}
           \sum_{n=1}^\infty \|\Psi_\tau(\pi n)-\Psi_0(\pi n)\|^2
           \le\sum_{n=1}^\infty \|\mathscr K_{\tau,N}P_{n,0}\Psi_0(\pi n)\|^2
           \le\|\mathscr K_{\tau,N} \|_{\mathcal B_2}^2.
\end{equation}
Combining~\eref{eq.233} and~\eref{eq.234} with the uniform bound~$\sup\limits_{n\in\mathbb N}\sum\limits_{\lambda_j\in\Delta_n} 1 <\infty$ satisfied due to~$(A_1)$, we obtain the inequality~\eref{eq.231} for the operators~$\Psi_\tau$.
\end{proof}

\begin{lemma}\label{le.211}
Assume that $\tau\in \mathbf L _2$ and let $\mathfrak
a=((\lambda_j,\alpha_j))_{j\in\mathbb Z_+}$ satisfy
condition~$(A_1)$. Then for every  $f\in L_2$ we get
 \begin{equation}\label{eq.235}
          \sum_{j=0}^\infty\|\alpha_j\Psi^*_\tau(\lambda_j)f\|_{\mathbb C^r}^2<\infty, \qquad
          \sum_{j=1}^\infty\|\alpha_j\Phi^*_\tau(\lambda_j)f\|_{\mathbb C^r}^2<\infty.
 \end{equation}
Furthermore, for every sequence $(c_j)\in \ell_2(\mathbb Z_+,\mathbb C^r)$ the series $\sum_{j=0}^\infty \Psi_\tau(\lambda_j)c_j$ and $\sum_{j=1}^\infty
\Phi_\tau(\lambda_j)c_j$ converge in the space $L_2$.
\end{lemma}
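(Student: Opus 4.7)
The plan is to reduce everything to Lemma~\ref{le.210}: writing $n(j)$ for the unique $n\in\mathbb Z_+$ with $\lambda_j\in\Delta_n$, the corrections $\Psi_\tau(\lambda_j)-\Psi_0(\pi n(j))$ and $\Phi_\tau(\lambda_j)-\Phi_0(\pi n(j))$ are $\ell_2$-summable, whereas the principal parts $\Psi_0(\pi n)$ and $\Phi_0(\pi n)$ are built from the orthonormal cosine and sine systems of $L_2$, for which Parseval's identity is available componentwise. A preliminary extraction from $(A_1)$ will yield the uniform bound $M:=\sup_j\|\alpha_j\|<\infty$: since $\alpha_j\ge 0$ and $\alpha_j\le\sum_{\lambda_k\in\Delta_n}\alpha_k=I-\beta_n$ whenever $\lambda_j\in\Delta_n$, one has $\|\alpha_j\|\le 1+\|\beta_n\|$, and $(A_1)$ forces $\|\beta_n\|\to 0$; the same condition caps the cardinality $k_n:=\#\{j:\lambda_j\in\Delta_n\}$ by some constant $K$.

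For the first assertion, I would split $\alpha_j\Psi_\tau^*(\lambda_j)f=\alpha_j(\Psi_\tau(\lambda_j)-\Psi_0(\pi n(j)))^*f+\alpha_j\Psi_0^*(\pi n(j))f$ and estimate the two pieces separately. The squared norms of the first pieces are dominated by $M^2\|f\|^2\|\Psi_\tau(\lambda_j)-\Psi_0(\pi n(j))\|^2$ and sum to something finite by Lemma~\ref{le.210}. The squared norms of the second pieces satisfy $\sum_j\|\alpha_j\Psi_0^*(\pi n(j))f\|^2\le M^2K\sum_{n\ge 0}\|\Psi_0^*(\pi n)f\|^2$, which is bounded by a constant multiple of $\|f\|^2$ thanks to Parseval for the cosine basis $\{1,\sqrt{2}\cos(\pi nx):n\ge1\}$ applied componentwise to the $\mathbb C^r$-valued function $f$. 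The $\Phi_\tau$ case is handled identically using the sine basis.

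For the second claim, given $(c_j)\in\ell_2(\mathbb Z_+,\mathbb C^r)$, I would use the same decomposition $\sum_{j\in F}\Psi_\tau(\lambda_j)c_j=\sum_{j\in F}(\Psi_\tau(\lambda_j)-\Psi_0(\pi n(j)))c_j+\sum_{j\in F}\Psi_0(\pi n(j))c_j$ for finite $F\subset\mathbb Z_+$. The first sum converges absolutely in $L_2$ as $F\uparrow\mathbb Z_+$ by Cauchy--Schwarz in $\ell_2$ combined with Lemma~\ref{le.210}. The second is handled by regrouping according to $n$: with $d_n:=\sum_{\lambda_j\in\Delta_n}c_j$, the elementary estimate $|d_n|^2\le k_n\sum_{\lambda_j\in\Delta_n}|c_j|^2\le K\sum_{\lambda_j\in\Delta_n}|c_j|^2$ yields $(d_n)\in\ell_2(\mathbb Z_+,\mathbb C^r)$, so $\sum_n\Psi_0(\pi n)d_n$ converges in $L_2$ by orthogonality of the rescaled cosine system; the same argument with sines handles $\Phi_\tau$.

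The main technical obstacle is extracting the uniform bounds $\sup_j\|\alpha_j\|<\infty$ and $\sup_n k_n<\infty$ from $(A_1)$ and carrying out the regrouping in the second claim cleanly (so that partial sums over $j\le N$ and partial sums grouped by $n$ differ only by boundary contributions that are negligible in $L_2$). Once these are in place, the remainder is a routine combination of Cauchy--Schwarz, Parseval, and the $\ell_2$-summability already provided by Lemma~\ref{le.210}.
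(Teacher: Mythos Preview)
Your proposal is correct and follows essentially the same route as the paper: the same splitting $\Psi_\tau(\lambda_j)=\bigl[\Psi_\tau(\lambda_j)-\Psi_0(\pi n(j))\bigr]+\Psi_0(\pi n(j))$, the same appeal to Lemma~\ref{le.210} for the correction terms, and the same use of the orthonormal cosine/sine systems (Bessel/Parseval) together with the uniform bounds $\sup_n k_n<\infty$ and $\sup_j\|\alpha_j\|<\infty$ for the principal parts. The only cosmetic difference is that the paper first strips off the $\alpha_j$ (reducing to $\sum_j\|\Psi_\tau^*(\lambda_j)f\|^2<\infty$) before decomposing, whereas you carry the $\alpha_j$ through; your explicit extraction of the bound $\|\alpha_j\|\le 1+\|\beta_{n(j)}\|$ and your regrouping via $d_n$ are slightly more detailed than what the paper writes.
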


\begin{proof}
We shall only prove the first inequality in~\eref{eq.235} and convergence of the series~$\sum_{j=0}^\infty \Psi_\tau(\lambda_j)c_j$; the other statements are justified analogously. Since the norms of the matrices~$\alpha_j$ are uniformly bounded, it suffices to prove the inequality
\(
\sum_{j=0}^\infty\|\Psi^*_\tau(\lambda_j)f\|_{\mathbb C^r}^2 < \infty.
\)
Observe that
\[
   \eqalign{ \sum_{j=0}^\infty\|\Psi^*_\tau(\lambda_j)f\|_{\mathbb C^r}^2
        &= \sum_{n=0}^\infty \sum_{\lambda_j\in \Delta_n}
            \|\Psi^*_\tau(\lambda_j)f\|_{\mathbb C^r}^2\\
        &\le  2 \sum_{n=0}^\infty \sum_{\lambda_j\in \Delta_n}
            \Bigl\{
             \|[\Psi^*_\tau(\lambda_j)-\Psi^*_0(\pi n)]f\|_{\mathbb C^r}^2
              + \|\Psi^*_0(\pi n)f\|_{\mathbb C^r}^2\Bigr\}}
\]
and that
\[
    \sum_{n=1}^\infty \|\Psi^*_0(\pi n)f\|_{\mathbb C^r}^2
        \le \|f\|_{\mathbb C^r}^2
\]
by the Bessel inequality for the orthonormal system~$\{\sqrt2 \cos\pi nx\}_{n\in\bN}$. The desired inequality follows now from Lemma~\ref{le.210} and the fact that, by virtue of~$(A_1)$, $\sup\limits_{n\in\mathbb N}\sum\limits_{\lambda_j\in\Delta_n}
1 <\infty$.

To prove convergence of the series $\sum_{j=0}^\infty \Psi_\tau(\lambda_j)c_j$, we similarly write
\[
    \sum_{j=0}^\infty \Psi_\tau(\lambda_j)c_j
        = \sum_{n=0}^\infty \sum_{\lambda_j\in \Delta_n}
              \bigl[\Psi_\tau(\lambda_j) - \Psi_0(\pi n)\bigr]c_j
        + \sum_{n=0}^\infty \sum_{\lambda_j\in \Delta_n}
                \Psi_0(\pi n)c_j.
\]
Since~$(c_j)\in \ell_2(\mathbb Z_+,\mathbb C^r)$, the first series converges in~$L_2$ due to~\eref{eq.231} and the Cauchy--Bunyakowski inequality, while the second one due to the fact that the system~$\{\sqrt2 \cos\pi nx\}_{n\in\bN}$ is orthonormal in $L_2(0,1)$.
\end{proof}

\begin{lemma}\label{le.212}
Assume that $\tau\in \mathbf L _2$, let $\mathfrak
a=((\lambda_j,\alpha_j))_{j\in\mathbb Z_+}$ satisfy
condition~$(A_1)$, and set
\begin{equation*}
           \widehat P_j=\Psi_\tau(\lambda_j)\alpha_j\Psi^*_\tau(\lambda_j), \qquad
           \widehat Q_j=\Phi_\tau(\lambda_j)\alpha_j\Phi^*_\tau(\lambda_j), \qquad
           j\in\mathbb N.
\end{equation*}
Then the series  $\sum_{j=1}^\infty \widehat P_j$ and
$\sum_{j=1}^\infty \widehat Q_j$ converge in the strong operator
topology and, moreover,
\begin{equation}\label{eq.237}
           \sum_{n=1}^\infty\|P_{n,0}-\sum_{\lambda_j\in\Delta_n}\widehat P_j \|^2 <\infty, \qquad
           \sum_{n=1}^\infty\|Q_{n,0}-\sum_{\lambda_j\in\Delta_n}\widehat Q_j \|^2 <\infty.
\end{equation}
\end{lemma}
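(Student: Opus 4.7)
The plan is to dispatch the strong convergence and the $\ell^2$-estimate separately, treating $\widehat P_j$ and $\widehat Q_j$ in parallel, since they differ only by exchanging $\Psi$ for $\Phi$, and the corresponding versions of Lemmas~\ref{le.210}--\ref{le.211} are stated for both families.

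For strong convergence, fix $f \in L_2$. Lemma~\ref{le.211} gives two facts in one package: the sequence $c_j := \alpha_j \Psi_\tau^*(\lambda_j) f$ lies in $\ell_2(\mathbb{Z}_+,\mathbb{C}^r)$, and $\sum_{j \geq 1} \Psi_\tau(\lambda_j) c_j$ converges in $L_2$. Since the partial sums of this latter series are exactly $\sum_{j \geq 1} \widehat P_j f$, strong convergence of $\sum \widehat P_j$ is immediate; the $\Phi$-version of Lemma~\ref{le.211} handles the $\widehat Q_j$ sum identically.

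For the $\ell^2$-estimate, the guiding idea is a ``main term plus error'' decomposition. Since for $n \geq 1$ the unperturbed data give $\alpha_n(0) = I$, one has $P_{n,0} = \Psi_0(\pi n) \Psi_0^*(\pi n)$. Using the definition~\eref{eq.218} of $\beta_n$, I would rewrite
\[
  P_{n,0} - \sum_{\lambda_j \in \Delta_n} \widehat P_j
  = \Psi_0(\pi n) \beta_n \Psi_0^*(\pi n) \;-\; E_n,
\]
with
\[
  E_n := \sum_{\lambda_j \in \Delta_n}\Bigl\{[\Psi_\tau(\lambda_j)-\Psi_0(\pi n)]\alpha_j \Psi_\tau^*(\lambda_j)
         + \Psi_0(\pi n)\alpha_j[\Psi_\tau^*(\lambda_j)-\Psi_0^*(\pi n)]\Bigr\}.
\]
The main term contributes $\sum_n \|\Psi_0(\pi n)\beta_n\Psi_0^*(\pi n)\|^2 \leq \sum_n \|\beta_n\|^2 < \infty$ directly from condition~$(A_1)$, since $\|\Psi_0(\pi n)\| \leq 1$.

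The principal work, therefore, lies in showing $\sum_n \|E_n\|^2 < \infty$; this is the step I expect to be the main obstacle, albeit a technical rather than a conceptual one. Three uniform bounds feed into it: (a) $\|\Psi_\tau(\lambda_j)\|$ is uniformly bounded by~\eref{eq.28} via $\sqrt{2}(1+\|\mathscr K_{\tau,N}\|)$; (b) $\|\alpha_j\|$ is uniformly bounded because $0 \leq \alpha_j \leq \sum_{\lambda_k \in \Delta_n}\alpha_k = I - \beta_n$ in the matrix order and $\|\beta_n\|$ is bounded by~$(A_1)$; (c) $N_0 := \sup_n |\{j : \lambda_j \in \Delta_n\}| < \infty$, again by~$(A_1)$. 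Combining these,
\[
  \|E_n\| \leq C \sum_{\lambda_j \in \Delta_n} \|\Psi_\tau(\lambda_j) - \Psi_0(\pi n)\|,
\]
and the Cauchy--Schwarz inequality together with~(c) yields $\|E_n\|^2 \leq C N_0 \sum_{\lambda_j \in \Delta_n} \|\Psi_\tau(\lambda_j) - \Psi_0(\pi n)\|^2$. Summing over $n$ and invoking Lemma~\ref{le.210} then gives $\sum_n \|E_n\|^2 < \infty$, which together with the bound on the main term proves the first inequality in~\eref{eq.237}. The $\widehat Q_j$ version follows verbatim with $\Phi$ replacing $\Psi$ and the corresponding half of Lemma~\ref{le.210}.
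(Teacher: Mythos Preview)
Your proof is correct and follows essentially the same route as the paper: strong convergence via Lemma~\ref{le.211}, then the same telescoping decomposition $P_{n,0}-\sum_{\lambda_j\in\Delta_n}\widehat P_j=\Psi_0(\pi n)\beta_n\Psi_0^*(\pi n)-E_n$ with $E_n$ written as a difference of $\widehat P_j$ and $\Psi_0(\pi n)\alpha_j\Psi_0^*(\pi n)$, bounded using the uniform estimates on $\|\Psi_\tau(\lambda_j)\|$, $\|\alpha_j\|$, the cardinality bound from~$(A_1)$, and Lemma~\ref{le.210}. The only cosmetic difference is that the paper groups the two halves of the telescoping identity for $\widehat P_j-\Psi_0(\pi n)\alpha_j\Psi_0^*(\pi n)$ in the opposite order, which is immaterial.
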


\begin{proof}
Convergence of the series $\sum_{j=1}^\infty \widehat P_j$
and $\sum_{j=1}^\infty \widehat Q_j$ follows
from Lemma~\ref{le.211}. We prove the first inequality in~\eref{eq.237}; the proof of the second one is similar.
Recall that $P_{n,0} = \Psi_0(\pi n)\Psi^*_0(\pi n)$ and that (see~\eref{eq.218})
\[
    \sum_{\lambda_j\in\Delta_n} \Psi_0(\pi n)\alpha_j\Psi^*_0(\pi n)
        = P_{n,0} - \Psi_0(\pi n)\beta_n\Psi^*_0(\pi n).
\]
Using this and the relation
 \begin{equation*}\fl
         \widehat P_j-\Psi_0(\pi n)\alpha_j\Psi^*_0(\pi n)
         = \Psi_\tau(\lambda_j)\alpha_j[\Psi^*_\tau(\lambda_j)-\Psi^*_0(\pi n)] + [\Psi_\tau(\lambda_j)-\Psi_0(\pi n)]\alpha_j\Psi^*_0(\pi n),
\end{equation*}
one concludes that
\[
  \|P_{n,0}-\sum_{\lambda_j\in\Delta_n}\widehat P_j \|^2
  \le C_1\|\beta_n\|^2 +C_2\sum_{\lambda_j\in\Delta_n}\|\Psi_\tau(\lambda_j)- \Psi_0(\pi n)\|^2 ,
\]
where $C_1$ and $C_2$ are positive constants independent
of~$n\in\mathbb N$. The result now follows from~\eref{eq.231} and
assumption~$(A_1)$.
\end{proof}

Let $\tau\in\mathbf L _2$ and let $\frak
a=((\lambda_j,\alpha_j))_{j\in\mathbb Z_+}$ satisfy
condition~$(A_1)$. We denote by $\mathscr U^e_{\frak a,\tau}$ and
$\mathscr U^o_{\frak a,\tau}$ the operators given by the formulas
\begin{equation}\label{eq.238}
               \mathscr U^e_{\mathfrak a,\tau}:=\sum_{j=0}^\infty\Psi_\tau(\lambda_j)\alpha_j\Psi^*_\tau(\lambda_j), \qquad
               \mathscr U^o_{\mathfrak a,\tau}:=\sum_{j=1}^\infty\Phi_\tau(\lambda_j)\alpha_j\Phi^*_\tau(\lambda_j).
\end{equation}

\begin{remark}\label{rm.213}
For every $\tau\in\Re\mathbf L_2$ the operators $\mathscr
U^e_{\frak a,\tau}$ and $\mathscr U^o_{\frak a,\tau}$ are nonnegative. In addition, in view
of Theorem~\ref{th.24} for every $\tau\in \Re\mathbf L_2$ we get
the equality
\begin{equation}\label{eq.250}
    \mathscr U^e_{\frak a_{\tau},\tau}=\mathcal I=\mathscr U^o_{\frak a_{\tau},\tau}.
\end{equation}
 \end{remark}

\begin{lemma}\label{le.213}
Assume that $\tau\in \mathbf L _2$ and that $\mathfrak
a=((\lambda_j,\alpha_j))_{j\in\mathbb Z_+}$ satisfies
condition~$(A_1)$. Then the following equivalences  hold:
\begin{equation}\label{eq.239}
            (\mathscr U^e_{\mathfrak a,\tau}>0)\Longleftrightarrow (A_3), \qquad \qquad
            (\mathscr U^o_{\mathfrak a,\tau}>0) \Longleftrightarrow (A_4).
\end{equation}
\end{lemma}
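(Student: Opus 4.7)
My plan is to show that the operator $\mathscr U^e_{\mathfrak a,\tau}$ is positive definite if and only if the closed linear span of the vectors $\Psi_\tau(\lambda_j)d$, $j\in\mathbb Z_+$, $d\in\Ran\alpha_j$, equals $L_2$; and then translate this completeness condition into condition $(A_3)$ using that $\Psi_\tau(\lambda)=(\mathcal I+\mathscr K_{\tau,N})\Psi_0(\lambda)$ with $\mathcal I+\mathscr K_{\tau,N}$ boundedly invertible. The argument for $\mathscr U^o_{\mathfrak a,\tau}$ and $(A_4)$ is strictly parallel.

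For the first step, note that by Lemma~\ref{le.212} the series defining $\mathscr U^e_{\mathfrak a,\tau}$ converges strongly to a bounded self-adjoint nonnegative operator, and that for every $f\in L_2$
\[
    (\mathscr U^e_{\mathfrak a,\tau}f\mid f)
    =\sum_{j=0}^\infty(\alpha_j\Psi^*_\tau(\lambda_j)f\mid\Psi^*_\tau(\lambda_j)f)
    =\sum_{j=0}^\infty\|\alpha_j^{1/2}\Psi^*_\tau(\lambda_j)f\|_{\mathbb C^r}^2,
\]
so $f\in\ker\mathscr U^e_{\mathfrak a,\tau}$ exactly when $\alpha_j\Psi^*_\tau(\lambda_j)f=0$ for every $j\in\mathbb Z_+$. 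Since $\alpha_j\ge0$, this amounts to $\Psi^*_\tau(\lambda_j)f\perp\Ran\alpha_j$, i.e., to $(f\mid\Psi_\tau(\lambda_j)d)_{L_2}=0$ for all $d\in\Ran\alpha_j$. Hence $\mathscr U^e_{\mathfrak a,\tau}>0$ is equivalent to the completeness in $L_2$ of the family $\{\Psi_\tau(\lambda_j)d\mid j\in\mathbb Z_+,\,d\in\Ran\alpha_j\}$.

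For the second step I will invoke~\eref{eq.28}: since $\mathscr K_{\tau,N}\in\mathfrak G_2^+$ is Volterra, $\mathcal I+\mathscr K_{\tau,N}$ is a bounded invertible operator on $L_2$ and therefore preserves density of subspaces. Because $\Psi_\tau(\lambda_j)d=(\mathcal I+\mathscr K_{\tau,N})\Psi_0(\lambda_j)d=\sqrt2(\mathcal I+\mathscr K_{\tau,N})(d\cos\lambda_j x)$, the family above is complete in $L_2$ precisely when $\{d\cos\lambda_j x\mid j\in\mathbb Z_+,\,d\in\Ran\alpha_j\}$ is. This gives the first equivalence. The argument for $\mathscr U^o_{\mathfrak a,\tau}$ is identical: the kernel computation yields that $\mathscr U^o_{\mathfrak a,\tau}>0$ iff $\{\Phi_\tau(\lambda_j)d\mid j\in\mathbb N,\,d\in\Ran\alpha_j\}$ is complete in $L_2$, and the factorization $\Phi_\tau(\lambda)=(\mathcal I+\mathscr K_{\tau,D})\Phi_0(\lambda)$ with $\mathcal I+\mathscr K_{\tau,D}$ boundedly invertible reduces this to condition~$(A_4)$.

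I do not foresee any substantial obstacle: the only nontrivial input is the convergence/boundedness of the two series (supplied by Lemma~\ref{le.212}) and the invertibility of $\mathcal I+\mathscr K_{\tau,N}$ and $\mathcal I+\mathscr K_{\tau,D}$, both of which follow from their Volterra character mentioned right after~\eref{eq.28}. The remaining manipulations are purely formal identifications of kernels with orthogonal complements.
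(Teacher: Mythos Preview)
Your proof is correct and follows essentially the same route as the paper: both arguments identify $\ker\mathscr U^e_{\mathfrak a,\tau}$ with the orthogonal complement of the family $\{\Psi_\tau(\lambda_j)d\}$ and then use the invertibility of $\mathcal I+\mathscr K_{\tau,N}$ to pass to $\tau=0$. The only cosmetic difference is that the paper first invokes the factorization $\mathscr U^e_{\mathfrak a,\tau}=(\mathcal I+\mathscr K_{\tau,N})\mathscr U^e_{\mathfrak a,0}(\mathcal I+\mathscr K_{\tau,N}^*)$ of~\eref{eq.242} to reduce to $\tau=0$ at the operator level and then computes the kernel there, whereas you compute the kernel for general~$\tau$ and perform the reduction at the level of the spanning set; the content is the same.
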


\begin{proof}  Taking into account the relations~\eref{eq.28}, we obtain that
\begin{equation}\label{eq.242}\fl
             \mathscr U^e_{\mathfrak a,\tau}
                   =(\mathcal I+\mathscr K_{\tau,N})\mathscr U^e_{\mathfrak a,0}(\mathcal I+\mathscr K_{\tau,N}^*),\qquad
              \mathscr U^o_{\mathfrak a,\tau}
                   =(\mathcal I+\mathscr K_{\tau,D})\mathscr U^o_{\mathfrak a,0}(\mathcal I+\mathscr K_{\tau,D}^*).
\end{equation}
Since the operators~$\mathcal I+\mathscr K_{\tau,N}$ and~$\mathcal I+\mathscr K_{\tau,D}$ are homeomorphisms of the space~$L_2$, it is enough to prove equivalences~\eref{eq.239} only for
$\tau=0$. Set
\[\fl
        \mathcal X=\{d\cos \lambda_{j} x\mid j\in\mathbb{Z_+},  d\in\Ran\alpha_{j}\}, \qquad
        \mathcal Y=\{d\sin \lambda_j x\mid j\in\mathbb{N}, d\in \Ran\alpha_j\}
\]
and observe that conditions $(A_3)$ and $(A_4)$ are equivalent to
completeness of the sets $\mathcal X$ and $\mathcal Y$
respectively. Since, in view of~\eref{eq.27},
\[
    \Psi^*_0(\lambda) f = \sqrt2\int_0^1 (\cos\lambda x) f(x)\,\rmd x,
    \qquad
    \Phi^*_0(\lambda) f = \sqrt2\int_0^1 (\sin\lambda x) f(x)\,\rmd x,
\]
we conclude that
\[\fl
          \ker\mathscr U^e_{\mathfrak a,0}=\bigcap_{j\in\mathbb Z_+}\ker\alpha_j\Psi^*_0(\lambda_j)=\mathcal X^\perp, \qquad
          \ker\mathscr U^o_{\mathfrak a,0}=\bigcap_{j\in\mathbb N}\ker\alpha_j\Phi^*_0(\lambda_j)=\mathcal Y^\perp.
\]
This justifies the equivalences of~\eref{eq.239}.
\end{proof}

\begin{proofof}{Proof of Theorem~\ref{th.29}}
Let $\tau\in \Re\mathbf L _2$ and $\mathfrak
a_\tau=((\lambda_j(\tau),\alpha_j(\tau)))_{j\in\mathbb Z_+}$.
Then~$(A_1)$ holds by Theorem~\ref{th.25}, while $(A_3)$ and
$(A_4)$ are satisfied in view of~\eref{eq.250} and Lemma~\ref{le.213}.
Next, in virtue of Lemma~\ref{le.212},
\begin{equation*}
          \sum_{n=0}^\infty\|P_{n,0}-\sum_{\lambda_j\in\Delta_n} P_{j,\tau}\|^2 <\infty.
\end{equation*}
Applying now Lemma~\ref{le.b1}, we conclude that there exists
$N_0\in\mathbb N$ such that
\begin{equation}\label{eq.245}
          \sum_{n=0}^N\sum_{\lambda_j\in\Delta_n} \rank P_{j,\tau}=\sum_{n=0}^N\rank P_{n,0}, \qquad N\ge N_0.
\end{equation}
It follows from~\eref{eq.29} and \eref{eq.216} that $\rank
P_{j,\tau}=\rank\alpha_j$ and $\rank P_{n,0}=r$ for
all~$j,n\in\mathbb Z_+$. Also, $\rank \alpha_0=r$ as $\alpha_0>0$.
Together with~\eref{eq.245} this justifies~$(A_2)$. The proof is
complete.
\end{proofof}

\section{\textbf{The Krein accelerant}}\label{sec.3}

\subsection{Proof of Theorem~\ref{th.12} }\label{subsec.31}

Let $H$ be an even function belonging to
$L_2((-1,1),M_r)$. We denote by~$\mathscr H$ an operator in $L_2$
given by
\begin{equation}\label{eq.30}
  (\mathscr H f)(x):=\int_0^1 H(x-t)f(t)\,\rmd t.
\end{equation}
Set $\mathscr H^a:=\chi_a\mathscr H \chi_a$, $a\in [0,1]$, where
$\chi_a$ is an operator in $L_2$ of multiplication by the
indicator of the interval~$(0,a]$, i.e.,
\[
        (\chi_a f)(x)=\left\{ \begin{array}{ll}
                                f(x), & \hbox{if $x\in (0,a]$,} \\
                                0,    & \hbox{if $x\in (a,1)$.}
                              \end{array}\right.
\]

\begin{remark}\label{re.31}
Taking into account Definition~\ref{de.11}, it is easy to see
that the following  equivalences take place:
\begin{equation}\label{eq.31}
        \eqalign{
            H\in\mathfrak H_2 \Longleftrightarrow \forall a\in [0,1] \quad
                                     \ker(\mathcal I+\mathscr H^a)=\{0\}; \\
            H\in\mathfrak H_2\Longleftrightarrow H^*\in\mathfrak H_2;\\
            H\in\Re\mathfrak H_2 \Longleftrightarrow (\mathcal I+\mathscr H)>0.}
\end{equation}
Definition~\ref{de.11} also implies that the set $\mathfrak H_2$
is open and, therefore, the set~$\mathfrak H_2\cap
C^1([-1,1],M_r)$ is dense everywhere in $\mathfrak H_2$.
\end{remark}

Suppose that $H\in\mathfrak H_2$. Then for every $a\in [0,1]$ the
operator $\mathcal I +\mathscr H^a$ is invertible in the algebra~$\mathcal
B(L_2)$ of bounded linear operators acting in~$L_2$. Since the operator~$\mathscr H^a$ depends continuously
on~$a\in[0,1]$, the mapping $[0,1]\ni a\mapsto (\mathcal I
+\mathscr H^a)^{-1}\in\mathcal B(L_2)$ is continuous. Denote
by~$\Gamma_{a,H}$ the kernel of the integral operator $-(\mathcal I +
\mathscr H^a)^{-1} \mathscr H^a$. Since $\mathscr H$ belongs to
the class of the Hilbert--Schmidt operators, the mapping
\begin{equation}\label{eq.32}
            [0,1]\times \mathfrak H_2\ni (a,H)\mapsto \Gamma_{a,H}\in L_2((0,1)^2,M_r)
\end{equation}
is continuous.

It can be easily seen that, for all $(x,t)\in (0,1)^2$ and $H\in \mathfrak H_2$,
\begin{equation}\label{eq.33}
             [\Gamma_{a,H}(x,t)]^*=\Gamma_{a,H^*}(t,x)=\Gamma_{a,H^*}(x,t).
\end{equation}
For every $H\in \mathfrak H_2$ and every $(x,t)\in \Omega$ we put
\begin{equation}\label{eq.34}\fl
            \widetilde R_H(x,t)= \int_0^x H(x-y)H(y-t)\,\rmd y + \int_0^x\int_0^x
H(x-u)\Gamma_{x,H}(u,v)H(v-t)\,\rmd v\,\rmd u.
\end{equation}
Since the functions~\eref{eq.32} are continuous and the shift
$t\mapsto H(\cdot-t)$ acts continuously in $L_2$-topology, one
concludes that $\widetilde R_H\in C(\Omega,M_r)$ and that the
mapping
\begin{equation}\label{eq.35}
             \mathfrak H_2\ni H\mapsto \widetilde R_H\in C(\Omega,M_r)
\end{equation}
is continuous.

\begin{lemma}\label{le.32} Let $H\in \mathfrak H_2$. Then the
function
  \begin{equation}\label{eq.36}
          R_H(x,t):=\left\{
                       \begin{array}{ll}
                         \widetilde R_H(x,t) -H(x-t), & \hbox{$0\le t \le x\le 1$,} \\
                         \qquad\qquad 0, & \hbox{$0\le x<t\le 1$,}
                       \end{array}
                     \right.
  \end{equation}
belongs to $G^+_2$ and equation~\eref{eq.19} has a unique
solution in the class $L_2(\Omega,M_r)$. Moreover,
the mapping $\mathfrak H_2\ni H\mapsto R_H\in G_2^+$ is continuous.
\end{lemma}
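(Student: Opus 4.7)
The plan is to take the formula~\eref{eq.36} as the definition of $R_H$ and verify directly that it satisfies the Krein equation~\eref{eq.19}, then deduce uniqueness and continuity. The engine of the verification will be the two resolvent identities
\begin{equation*}
    \mathscr G_x+\mathscr H^x+\mathscr H^x\mathscr G_x = 0,
    \qquad
    \mathscr G_x+\mathscr H^x+\mathscr G_x\mathscr H^x = 0,
\end{equation*}
where $\mathscr G_x$ denotes the integral operator with kernel $\Gamma_{x,H}$; these hold because $\mathscr H^x$ commutes with $(\mathcal I+\mathscr H^x)^{-1}$, so that $\mathscr G_x=-(\mathcal I+\mathscr H^x)^{-1}\mathscr H^x=-\mathscr H^x(\mathcal I+\mathscr H^x)^{-1}$.

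Membership $R_H\in G_2^+$ is essentially immediate: $\widetilde R_H\in C(\Omega,M_r)\subset L_2(\Omega,M_r)$ by continuity of the mapping~\eref{eq.35}, and $(x,t)\mapsto H(x-t)$ lies in $L_2(\Omega,M_r)$ by Fubini since $\int_\Omega\|H(x-t)\|^2\,\rmd x\,\rmd t\le\|H\|^2_{L_2((-1,1),M_r)}$. For the equation itself I introduce the shift $\phi_t(y):=H(y-t)\in L_2((0,x),M_r)$ and the ``boundary functional'' $E_x(\phi):=\int_0^x H(x-y)\phi(y)\,\rmd y$. Rewriting~\eref{eq.34} gives $\widetilde R_H(x,t)=E_x\bigl((\mathcal I+\mathscr G_x)\phi_t\bigr)$, and an analogous interchange of integrals in the convolution term yields $\int_0^x\widetilde R_H(x,\xi)H(\xi-t)\,\rmd\xi=E_x\bigl((\mathcal I+\mathscr G_x)\mathscr H^x\phi_t\bigr)$. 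Substituting $R_H=\widetilde R_H-H(x-\cdot)$ into the left-hand side of~\eref{eq.19}, the explicit $\pm H(x-t)$ terms cancel and the remainder collapses to $E_x\bigl[(\mathcal I+\mathscr G_x)(\mathcal I+\mathscr H^x)\phi_t-\phi_t\bigr]=0$, using the identity $(\mathcal I+\mathscr G_x)(\mathcal I+\mathscr H^x)=\mathcal I$.

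For uniqueness, any $D\in L_2(\Omega,M_r)$ satisfying the homogeneous version of~\eref{eq.19} produces, for a.e.\ $x$ and each row of $D(x,\cdot)$, a column vector $u\in L_2((0,x),\mathbb C^r)$ solving $(\mathcal I+\mathscr K)u=0$, where $\mathscr K$ is the integral operator with kernel $H(t-\xi)^T$; evenness of $H$ identifies $\mathscr K$ with the transpose $(\mathscr H^x)^T$. Since $\mathscr H^x$ is compact and $\mathcal I+\mathscr H^x$ is invertible by the accelerant hypothesis, the Fredholm alternative forces $\mathcal I+(\mathscr H^x)^T$ to be invertible as well, so $u=0$ for every row, and thus $D=0$. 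Continuity of $H\mapsto R_H$ into $G_2^+$ then follows by combining continuity of $H\mapsto\widetilde R_H$ into $C(\Omega,M_r)$ (already noted for~\eref{eq.35}) with the obvious continuity of the linear map $H\mapsto H(x-t)$ from $L_2((-1,1),M_r)$ into $L_2(\Omega,M_r)$.

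The main obstacle is the verification step. Since $\Gamma_{x,H}$ is only an $L_2$ kernel, $\widetilde R_H(x,t)$ is not a genuine trace of it on $\{x'=x\}$ but must be handled through the explicit double-integral form~\eref{eq.34}; additionally, the matrix non-commutativity of the factors $H(x-u)$, $\Gamma_{x,H}(u,v)$, $H(v-t)$ requires careful tracking when they are moved across integrals and regrouped through the functional $E_x$.
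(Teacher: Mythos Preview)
Your verification that $R_H$ solves the Krein equation is correct and, at its core, is the same computation the paper performs: both arguments rest on the resolvent identity $(\mathcal I+\mathscr G_x)(\mathcal I+\mathscr H^x)=\mathcal I$. The paper phrases it as recognising that $R_H(x,t)=\Gamma_{x,H}(x,t)$ and then reading off the Krein equation from the kernel identity $\Gamma_{x,H}(x,t)+H(x-t)+\int_0^x\Gamma_{x,H}(x,\xi)H(\xi-t)\,\rmd\xi=0$; you instead package the same identity through the functional~$E_x$, which has the advantage of never writing the trace~$\Gamma_{x,H}(x,\cdot)$ explicitly. Your uniqueness argument via the Fredholm alternative for $(\mathscr H^x)^T$ is exactly the content of the paper's Lemma~A.2.

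There is, however, a genuine gap in your treatment of the claim $R_H\in G_2^+$ and of the continuity of $H\mapsto R_H$. You argue only that $\widetilde R_H\in C(\Omega,M_r)\subset L_2(\Omega,M_r)$ and that $(x,t)\mapsto H(x-t)$ lies in $L_2(\Omega,M_r)$, and similarly you only check continuity of $H\mapsto H(x-t)$ into $L_2(\Omega,M_r)$. But $G_2^+$ is \emph{not} $L_2(\Omega,M_r)$: by definition it requires that the slice maps $x\mapsto K(x,\cdot)$ and $t\mapsto K(\cdot,t)$ be continuous from $[0,1]$ into $\mathbf L_2$, and the norm on $G_2$ is the sup of these slice norms, not the $L_2$ norm on the square. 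For the continuous piece $\widetilde R_H$ this is automatic, but for the piece $(x,t)\mapsto H(x-t)\chi_{\{t\le x\}}$ you must actually invoke the $L_2$-continuity of translations $s\mapsto H(\cdot-s)$ to get continuity of both slice maps, and likewise you need the estimate $\|H(x-\cdot)\chi_{(0,x)}\|_{\mathbf L_2}\le\|H\|_{L_2((-1,1),M_r)}$ (uniformly in $x$ and in $t$) to obtain continuity of $H\mapsto R_H$ in the $G_2^+$-norm rather than merely in $L_2(\Omega,M_r)$. This is easy to supply, but as written your argument does not establish the stated conclusion.
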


\begin{proof} That $R_H$ belongs to~$ G^+_2$ and the
mapping $H\mapsto R_H$ is continuous easily follows from continuity of the
mapping~\eref{eq.35}. Taking into account the definition of
$\Gamma_{a,H}$, we obtain that
\[
    H(x-t)+\Gamma_{x,H}(x,t)+\int_0^x \Gamma_{x,H}(x,\xi)H(\xi-t)\,\rmd \xi =0,  \qquad
    (x,t)\in \Omega.
\]
Straightforward calculations give
\begin{equation}\label{eq.19kr}
\Gamma_{x,H}(x,t)= \widetilde R_H(x,t) -H(x-t)= R_H(x,t),  \qquad
    (x,t)\in \Omega.
\end{equation}
Thus~$R_H$ is a solution of equation~\eref{eq.19}. Uniqueness
follows from Lemma~\ref{le.a2}.
\end{proof}

The proposition below follows from the results of~\cite{Kr56} (see also~\cite{GGK1}).

\begin{proposition}\label{pr.33}
Suppose that $H\in \mathfrak H_2\cap C([-1,1],M_r)$. Then, for
every $x,t\in [0,a]$,
\begin{equation}\label{eq.37}
            \frac{\partial \Gamma_{a,H}}{\partial a}(x,t)=\Gamma_{a,H}(x,a)\Gamma_{a,H}(a,t), \quad
            \Gamma_{a,H}(x,t)=\Gamma_{a,H}(a-x,a-t).
\end{equation}
\end{proposition}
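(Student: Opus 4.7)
The plan is to exploit the two equivalent forms of the resolvent equation satisfied by~$\Gamma_{a,H}$ together with the even symmetry of~$H$. I would first record the identities
\[
  \Gamma_{a,H}(x,t) + H(x-t) + \int_0^a H(x-s)\Gamma_{a,H}(s,t)\,\rmd s = 0,
\]
\[
  \Gamma_{a,H}(x,t) + H(x-t) + \int_0^a \Gamma_{a,H}(x,s) H(s-t)\,\rmd s = 0,
\]
valid for $(x,t)\in [0,a]^2$; they express that $\mathcal I + \mathcal R_a$ is the two-sided inverse of $\mathcal I + \mathscr H^a$, where $\mathcal R_a:= -(\mathcal I + \mathscr H^a)^{-1}\mathscr H^a$ has kernel~$\Gamma_{a,H}$.

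For the first formula, I would differentiate the first identity in~$a$ (with $x,t$ fixed and $a>\max(x,t)$), obtaining
\[
  \frac{\partial \Gamma_{a,H}}{\partial a}(x,t) + H(x-a)\Gamma_{a,H}(a,t) + \int_0^a H(x-s) \frac{\partial \Gamma_{a,H}}{\partial a}(s,t)\,\rmd s = 0.
\]
Applied columnwise to a vector $c\in\mathbb C^r$ this reads $(\mathcal I + \mathscr H^a)[\partial_a\Gamma_{a,H}(\cdot,t)c] = -H(\cdot-a)\Gamma_{a,H}(a,t)c$. The key step is then to evaluate $(\mathcal I + \mathscr H^a)^{-1}[H(\cdot-a)]$: setting $t=a$ in the second identity yields
\[
  H(x-a) + \int_0^a \Gamma_{a,H}(x,s)H(s-a)\,\rmd s = -\Gamma_{a,H}(x,a),
\]
so $(\mathcal I + \mathscr H^a)^{-1}[H(\cdot-a)](x) = -\Gamma_{a,H}(x,a)$. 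Combining the two computations gives exactly $\partial_a\Gamma_{a,H}(x,t) = \Gamma_{a,H}(x,a)\Gamma_{a,H}(a,t)$.

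For the symmetry, I would introduce the flip $U_a$ on $L_2((0,a), \mathbb C^r)$ given by $(U_a f)(x):= f(a-x)$; clearly $U_a^2 = \mathcal I$. Since~$H$ is even, the change of variables $s\mapsto a-s$ together with $H(-y) = H(y)$ shows that $U_a \mathscr H^a U_a = \mathscr H^a$. Consequently $U_a(\mathcal I + \mathscr H^a)^{-1}U_a = (\mathcal I + \mathscr H^a)^{-1}$, whence $U_a \mathcal R_a U_a = \mathcal R_a$. Reading off the kernel of $U_a \mathcal R_a U_a$ by substitution gives $\Gamma_{a,H}(a-x, a-t)$, and equating with the kernel of $\mathcal R_a$ yields the desired identity $\Gamma_{a,H}(x,t) = \Gamma_{a,H}(a-x, a-t)$.

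The main obstacle I anticipate is rigorously justifying the existence and continuity of~$\partial_a\Gamma_{a,H}$ that make the differentiation under the integral sign legitimate; this is precisely where the extra regularity $H\in C([-1,1],M_r)$ (as opposed to merely $L_2$) is used. Continuity of~$H$ together with the continuity of the map $(a,H)\mapsto\Gamma_{a,H}$ from~\eref{eq.32} forces joint continuity of $(a,x,t)\mapsto \Gamma_{a,H}(x,t)$; the differentiated integral equation then uniquely determines $\partial_a\Gamma_{a,H}$ as the continuous solution of a Fredholm equation with continuous right-hand side, so the formal manipulations above are fully legitimate and \eref{eq.37} holds pointwise on $[0,a]^2$.
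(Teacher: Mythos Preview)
Your argument is correct and is in fact the classical derivation of these identities. Note, however, that the paper does not supply its own proof of Proposition~\ref{pr.33}: it simply states that the result ``follows from the results of~\cite{Kr56} (see also~\cite{GGK1})''. What you have written is precisely the standard proof from those references---differentiating the resolvent identity in~$a$ and inverting via $(\mathcal I+\mathcal R_a)=(\mathcal I+\mathscr H^a)^{-1}$ gives the Gohberg--Krein formula, and conjugating by the reflection $U_a$ together with $H(-y)=H(y)$ gives the symmetry. Your closing paragraph correctly isolates the role of continuity of~$H$ in ensuring that $(a,x,t)\mapsto\Gamma_{a,H}(x,t)$ is jointly continuous and that the differentiated Fredholm equation has a unique continuous solution, so the formal computation is justified. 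One minor stylistic point: there is no need to pass to column vectors $c\in\mathbb C^r$, since the argument goes through verbatim for matrix-valued kernels.
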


Now we study the properties of the mapping~$\Theta$ defined by~\eref{eq.110}.

\begin{lemma}\label{le.34}
  \begin{enumerate}
    \item The mapping $\Theta: \mathfrak H_2\to \mathbf L_2$ is continuous and $\Theta(H^*)=[\Theta(H)]^*$ for
        every $H\in\mathfrak H_2$;
    \item  if $H\in\mathfrak H_2\cap C^1([-1,1],M_r)$, then $R_H\in C^1(\Omega,M_r)$ and
       \begin{equation}\label{eq.38}
              \frac{\partial}{\partial x}[R_H(x,x-t)]- R_H(x,0)R_H(x,t)=0, \qquad
              (x,t)\in\Omega.
       \end{equation}
  \end{enumerate}
\end{lemma}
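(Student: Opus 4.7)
The plan is to handle part (i) in two steps (continuity and the adjoint identity) and part (ii) by first establishing $C^1$ regularity of $R := R_H$ and then reading the identity~\eref{eq.38} off Proposition~\ref{pr.33}. For continuity of $\Theta$, I would combine~\eref{eq.110} with the decomposition~\eref{eq.36} to write $[\Theta(H)](x) = H(x) - \widetilde R_H(x, 0)$. The continuity of the mapping $H \mapsto \widetilde R_H$ from $\mathfrak H_2$ into $C(\Omega, M_r)$ recorded in~\eref{eq.35} makes the evaluation at $t = 0$ continuous into $C([0, 1], M_r) \hookrightarrow \mathbf L_2$; combined with the trivial continuity of the restriction $L_2((-1, 1), M_r) \ni H \mapsto H|_{(0, 1)} \in \mathbf L_2$, continuity of $\Theta$ follows.

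For the identity $\Theta(H^*) = \Theta(H)^*$, it suffices to prove $\widetilde R_{H^*}(x, 0) = [\widetilde R_H(x, 0)]^*$. Using density of $\mathfrak H_2 \cap C([-1, 1], M_r)$ in $\mathfrak H_2$ (noted after Remark~\ref{re.31}) together with continuity of $\Theta$ and of the $*$-operation, I may restrict to continuous $H$. For such $H$ I would take conjugate transposes in~\eref{eq.34} at $t = 0$, apply~\eref{eq.33} to convert $\Gamma_{x, H}(u, v)^*$ into $\Gamma_{x, H^*}(u, v)$, and then use the two symmetries of $\Gamma_{x, H^*}$---the swap $\Gamma_{x, H^*}(u, v) = \Gamma_{x, H^*}(v, u)$ from the second equality in~\eref{eq.33} and the reflection $\Gamma_{x, H^*}(u, v) = \Gamma_{x, H^*}(x - u, x - v)$ from Proposition~\ref{pr.33}---together with the substitutions $y \mapsto x - y$, $u \leftrightarrow v$, and $(u, v) \mapsto (x - u, x - v)$, to transform $[\widetilde R_H(x, 0)]^*$ into the expression~\eref{eq.34} at $t = 0$ with $H$ replaced by $H^*$.

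For part (ii), assume $H \in C^1([-1, 1], M_r)$. To show $R \in C^1(\Omega, M_r)$, I would differentiate the Krein equation~\eref{eq.19} formally in $x$; the chain rule on $H(x-t)$ and the Leibniz boundary term at $\xi = x$ yield
\[
    \partial_x R(x, t) + R(x, x)\, H(x - t) + \int_0^x \partial_x R(x, \xi)\, H(\xi - t)\, \rmd \xi = -H'(x - t),
\]
an inhomogeneous Krein-type equation for $\partial_x R(x, \cdot)$ whose forcing is continuous in $x \in [0, 1]$. The continuous dependence of the Krein solution on the parameter $x$ and on the inhomogeneity (implicit in~\eref{eq.32}) then gives $\partial_x R \in C(\Omega, M_r)$; existence and continuity of $\partial_t R$ is analogous.

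Granting the $C^1$ regularity, the identity~\eref{eq.38} is a brief consequence of Proposition~\ref{pr.33}. From~\eref{eq.19kr} combined with the reflection symmetry in~\eref{eq.37},
\[
    R(x, x - t) = \Gamma_{x, H}(x, x - t) = \Gamma_{x, H}(0, t),
\]
so differentiating in $x$ and applying the first identity of~\eref{eq.37} with inner arguments fixed at $(0, t)$ and parameter $a = x$ gives
\[
    \partial_x [R(x, x - t)] = \Gamma_{x, H}(0, x)\, \Gamma_{x, H}(x, t) = R(x, 0)\, R(x, t),
\]
where the reflection symmetry rewrites $\Gamma_{x, H}(0, x) = \Gamma_{x, H}(x, 0) = R(x, 0)$. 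I expect the $C^1$ regularity step to be the main technical hurdle; once it is in place and Proposition~\ref{pr.33} is available, both the symmetry computation in~(i) and the PDE in~(ii) reduce to careful bookkeeping with the symmetries of $\Gamma_{a, H}$ collected in~\eref{eq.33} and~\eref{eq.37}.
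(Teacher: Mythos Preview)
Your treatment of part~(i) matches the paper's: both deduce continuity from~\eref{eq.110} and the continuity of $H\mapsto R_H$ (equivalently, of $H\mapsto\widetilde R_H$), and both prove $\Theta(H^*)=[\Theta(H)]^*$ for continuous~$H$ by conjugating the explicit formula derived from~\eref{eq.34}/\eref{eq.19kr}, invoking~\eref{eq.33} and the reflection symmetry of Proposition~\ref{pr.33}, and then passing to general~$H$ by density.

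For part~(ii) your two sub-arguments diverge from the paper in opposite directions. Your derivation of~\eref{eq.38} is \emph{more direct} than the paper's: you read it straight off Proposition~\ref{pr.33} via $R(x,x-t)=\Gamma_{x,H}(x,x-t)=\Gamma_{x,H}(0,t)$, then apply $\partial_a\Gamma_{a,H}(0,t)=\Gamma_{a,H}(0,a)\Gamma_{a,H}(a,t)$ and the reflection to identify the factors with $R(x,0)$ and $R(x,t)$. The paper instead substitutes $t\mapsto x-t$ in the Krein equation to get~\eref{eq.310}, differentiates in~$x$, compares with $R(x,0)$ times the original Krein equation, and concludes via the uniqueness statement Lemma~\ref{le.a2}. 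Your route is shorter but uses both identities of~\eref{eq.37} rather than only~$C^1$ regularity.

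Your $C^1$ regularity step, however, has a genuine gap. Differentiating~\eref{eq.19} \emph{formally} and observing that the resulting inhomogeneous Krein-type equation has a continuous solution produces only a \emph{candidate} for $\partial_x R$; it does not show that $R$ is differentiable or that the candidate equals the derivative. The appeal to ``continuous dependence \ldots\ implicit in~\eref{eq.32}'' does not close this: \eref{eq.32} gives continuity in the parameter, not differentiability of $R$ itself. One would need, e.g., a difference-quotient argument or to integrate the candidate and invoke uniqueness. The paper avoids this by a different route: it uses the explicit representation~\eref{eq.34} for $\widetilde R_H$, in which the only $x$-dependence beyond $C^1$ integrands and smooth integration limits is through the parameter~$a=x$ in $\Gamma_{x,H}(u,v)$; Proposition~\ref{pr.33} supplies $\partial_a\Gamma_{a,H}$ continuously, so Leibniz' rule gives $\widetilde R_H\in C^1(\Omega,M_r)$ directly, and then $R_H=\widetilde R_H-H(x-t)\in C^1$ since $H\in C^1$. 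Note that the same Proposition~\ref{pr.33} you already invoke for~\eref{eq.38} would also give you this regularity cleanly via~\eref{eq.34}.
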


\begin{proof} \emph{(i)}
Continuity of the mapping $\Theta$ easily follows
from definition and the continuity of the mapping $H\mapsto R_H$
(see Lemma~\ref{le.32}). By virtue of~\eref{eq.19kr} and the relation
\[
    \Gamma_{x,H}(x,\xi)=-H(x-\xi)-\int_0^x H(x-u)\Gamma_{x,H}(u,\xi)\,\rmd u,  \qquad
    (x,\xi)\in \Omega,
\]
we obtain that
\begin{equation*}\fl
       \Theta(H)(x)  = H(x)
                     - \int_0^x H(x-\xi)H(\xi)\,\rmd \xi
                     - \int_0^x\int_0^x H(x-u)\Gamma_{x,H}(u,v) H(v)\,\rmd u\,\rmd v .
\end{equation*}
Now assume that, in addition, $H\in C([-1,1],M_r)$;  using then~\eref{eq.33} and the second equality in~\eref{eq.37}, we get that
\begin{eqnarray*}\fl
    [\Theta(H)]^*(x) = H^*(x) - \int_0^x H^*(\xi)H^*(x-\xi)\,\rmd\xi \\
        \qquad\qquad - \int_0^x\int_0^x H^*(v)\Gamma_{x,H^*}(v,u)H^*(x-u)\,\rmd u\,\rmd v \\
    =H^*(x)-\int_0^x H^*(x-y)H^*(y)\,\rmd y \\
        \qquad\qquad - \int_0^x\int_0^x H^*(x-y)\Gamma_{x,H^*}(y,t)H^*(t) \,\rmd y\,\rmd t \\
    =\Theta(H^*)(x)
\end{eqnarray*}
as claimed. Since the set $\mathfrak H_2\cap C([-1,1],M_r)$ is dense in~$\mathfrak H_2$ and $\Theta$ is continuous, the equality~$\Theta(H^*)=[\Theta(H)]^*$ holds for all $H \in \mathfrak H_2$.

\emph{(ii)} Let $H\in\mathfrak H_2\cap C^1([-1,1],M_r)$ and $R=R_H$, so that
\begin{equation}\label{eq.39}
    R(x,t)+H(x-t)+\int_0^xR(x,\xi)H(\xi-t)\,\rmd \xi=0,
        \qquad (x,t)\in\Omega.
\end{equation}
It follows from Proposition~\ref{pr.33}  that the function
$a\mapsto\Gamma_{a,H}(u,v)$ is continuously differentiable for
$a\ge\max\{u,v\}$. Therefore, taking into account~\eref{eq.37},
\eref{eq.34}, and~\eref{eq.36}, we obtain $R\in C^1(\Omega,M_r).$
Since~$R$ is a solution of the equation~\eref{eq.19} and the
function~$H$ is even, one derives the equality
\begin{equation}\label{eq.310}
    R(x,x-t)+H(t)+\int_0^xR(x,x-\xi)H(\xi-t)\,\rmd\xi=0, \quad
        (x,t)\in\Omega.
\end{equation}
Now differentiate~\eref{eq.310} in the variable $x$ to get
\begin{equation}\label{eq.311}
    \frac{\partial}{\partial x}[R(x,x-t)]
        +R(x,0)H(x-t)
        +\int_0^x\frac{\partial}{\partial x}[R(x,x-\xi)]H(\xi-t)\,\rmd\xi=0.
\end{equation}
Multiplying both sides of equality~\eref{eq.39} by~$R(x,0)$ from
the left, we obtain
\begin{equation*}
    R(x,0)R(x,t)+R(x,0)H(x-t)+\int_0^xR(x,0)R(x,\xi)H(\xi-t) \,\rmd\xi=0.
\end{equation*}
Subtracting~\eref{eq.311} from the above equality, we
find that for the function
\[
    F(x,t):=\frac{\partial}{\partial x}[R(x,x-t)]-R(x,0)R(x,t)
\]
the following relation holds for all~$(x,t)\in\Omega$:
\begin{equation*}
    F(x,t)+\int_0^x F(x,\xi)H(\xi-t)\,\rmd \xi=0.
\end{equation*}
Lemma~\ref{le.a2} now yields $F\equiv0$, and the proof is
complete.
\end{proof}

\begin{proofof}{Proof of Theorem~\ref{th.12}}
Assume that $H\in\mathfrak H_2$ and set
\begin{eqnarray*}
    Q_{o,H}(x,t):=\frac 12\left[R_H\Bigl(x,\frac{x+t}{2}\Bigr)
            -R_H\Bigl(x,\frac{x-t}{2}\Bigr)\right],  \\
    Q_{e,H}(x,t):=\frac 12\left[R_H\Bigl(x,\frac{x+t}{2}\Bigl)
            + R_H\Bigl(x,\frac{x-t}{2}\Bigl)\right],
\end{eqnarray*}
where $(x,t)\in\Omega$. We extend  the functions $Q_{o,H}$ and $Q_{e,H}$ onto the square~$(0,1)^2$ by setting $Q_{o,H}\equiv 0$
and $Q_{e,H}\equiv 0$ in~$\Omega^-=[0,1]^2\setminus \Omega$.
It follows from~\eref{eq.36} and the continuity of the mapping~\eref{eq.35} that $Q_{o,H}$ and $Q_{e,H}$ belong
to~$G_2$ and that the mappings
\begin{equation}\label{eq.314}
    \mathfrak H_2 \ni H\mapsto Q_{o,H}\in G_2, \qquad
    \mathfrak H_2 \ni H\mapsto Q_{e,H}\in G_2
\end{equation}
are continuous. For $x\in [0,1]$ and $\lambda\in\mathbb C$, we consider the functions
\begin{equation*}
    \eqalign{ \varphi(x,\lambda):=\sin \lambda x I
                +\int_{0}^{x}(\sin\lambda t)\,Q_{o,H}(x,t)\, \rmd t, \\
             \psi(x,\lambda):=\cos \lambda x I
                +\int_{0}^{x}(\cos\lambda t)\,Q_{e,H}(x,t)\, \rmd t.}
\end{equation*}
Straightforward transformations give
\begin{equation}\label{eq.316}
\eqalign{ \varphi(x,\lambda)=\sin \lambda x I
        + \int_{0}^{x}(\sin\lambda(x-2s))\,R(x,x-s)\, \rmd s, \\
         \psi(x,\lambda)=\cos \lambda x I
        + \int_{0}^{x}(\cos\lambda(x-2s))\,R(x,x-s)\, \rmd s.}
\end{equation}

Suppose that $H\in \mathfrak H_2\cap C^1([-1,1],M_r)$. According to Lemma~\ref{le.34}, the function $R_H$ belongs to $C^1(\Omega,M_r)$ and
equality~\eref{eq.38} holds. Using~\eref{eq.316} and~\eref{eq.38}, we arrive at the relations
\begin{equation*}
    \left(\frac{\rmd}{\rmd x}-\tau\right)\varphi(x,\lambda)
        = \lambda\psi(x,\lambda),
        \qquad
    \left(\frac{\rmd}{\rmd x}+\tau\right)\psi(x,\lambda)
        =-\lambda \varphi(x,\lambda),
  \end{equation*}
where $\tau =\Theta(H)$. Since $\varphi(0,\lambda)=0$ and $\psi(0,\lambda)=I$,
the functions $\varphi$ and $\psi$ are solutions of problems~\eref{eq.11} and~\eref{eq.12}. In view of Theorem~\ref{th.21} this establishes the relations
\begin{equation}\label{eq.317}
    Q_{o,H}=K_{\tau,D},     \qquad
    Q_{e,H}=K_{\tau,N},     \qquad
    \tau=\Theta(H)
\end{equation}
for all $H\in \mathfrak H_2\cap C^1([-1,1],M_r)$.
Since the set $\mathfrak H_2\cap C^1([-1,1],M_r)$ is dense everywhere in
$\mathfrak H_2$ and the mappings~$\Theta$,
\eref{eq.314},
 $\mathbf L_2\ni\tau\to\,K_{\tau,D}\in\, G_2^+$, and
 $\mathbf L_2\ni\,\,\tau\to K_{\tau,N}\in G_2^+ $
are continuous (see Theorem~\ref{th.21}), it follows that equalities~\eref{eq.317} hold for all $H\in \mathfrak H_2$. The proof is complete.
\end{proofof}


\subsection{Proof of Theorem~\ref{th.13}.}\label{subsec.32}

Consider the integral equation
\begin{equation}\label{eq.318}
           P(x,x-s)+\tau(s)+\int_s^x\tau(u)P(u,s)\,\rmd u=0,\qquad (x,s)\in\Omega,
\end{equation}
where  $\tau$ is a known function from $\mathbf L_2$, and $P$ is
an unknown function from the class~$G_2^+$.

\begin{lemma}\label{le.35}
The equation~\eref{eq.318} has at most one solution. If
$H\in \mathfrak H_2\cap C^1([-1,1],M_r)$ and $\tau=\Theta(H)$, then the solution~$R_H$ of the Krein equation~\eref{eq.19} is also a
solution of the equation~\eref{eq.318}.
\end{lemma}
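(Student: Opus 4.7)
For existence, the plan is to integrate the differential identity~\eref{eq.38} from Lemma~\ref{le.34}(ii). Since $\tau(x) = -R_H(x, 0)$ by~\eref{eq.110}, identity~\eref{eq.38} reads $\partial_x[R_H(x, x-s)] = -\tau(x)\,R_H(x, s)$ for $(x,s) \in \Omega$. Integrating in $x$ from $s$ to $x$ and using $R_H(s, 0) = -\tau(s)$ yields
\[
R_H(x, x-s) + \tau(s) + \int_s^x \tau(y)\,R_H(y, s)\, dy = 0,
\]
which is exactly~\eref{eq.318}; that $R_H \in G_2^+$ is Lemma~\ref{le.32}.

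For uniqueness, let $Q = P_1 - P_2 \in G_2^+$ be the difference of two solutions, so $Q(x, x-s) + \int_s^x \tau(u)\,Q(u, s)\,du = 0$ on $\Omega$. I would perform the rotation of variables $\check Q(a, b) := Q(a+b, b)$ on the triangle $\Omega' := \{(a, b) : a, b \geq 0,\ a+b \leq 1\}$, which is an $L_2$-isometry from $L_2(\Omega, M_r)$ to $L_2(\Omega', M_r)$. Setting $\sigma := x - s$ and $v := u - s$ converts the equation into
\[
\check Q(s, \sigma) + \int_0^\sigma \tau(s+v)\,\check Q(v, s)\, dv = 0, \qquad (s, \sigma) \in \Omega'.
\]
Introducing the bounded operator $(\mathcal T f)(s, \sigma) := \int_0^\sigma \tau(s+v)\,f(v, s)\, dv$ on $L_2(\Omega', M_r)$, this reads $(\mathcal I + \mathcal T)\check Q = 0$; iterating gives $\check Q = \mathcal T^{2k} \check Q$ for all $k \in \mathbb N$. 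A direct induction produces
\[
(\mathcal T^{2k} f)(s, \sigma) = \int_{R_{2k}} \prod_{i=1}^{2k}\tau(v_{i-1}+v_i)\; f(v_{2k}, v_{2k-1})\, dv_1\cdots dv_{2k},
\]
with $v_0 := s$ and $R_{2k}$ the region given by the two chains of nestings $\sigma \geq v_1 \geq v_3 \geq \cdots \geq v_{2k-1} \geq 0$ and $s \geq v_2 \geq v_4 \geq \cdots \geq v_{2k} \geq 0$.

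The Cauchy--Bunyakowski inequality bounds $|(\mathcal T^{2k} f)(s, \sigma)|^2$ by $\int_{R_{2k}}\prod_{i=1}^{2k}|\tau(v_{i-1}+v_i)|^2\,dv$ times $\int_{R_{2k}}|f(v_{2k}, v_{2k-1})|^2\,dv$. The first factor is controlled by $\|\tau\|_{\mathbf L_2}^{4k}$ through iterated application of the trivial estimate $\int_0^1 |\tau(a + v)|^2\,dv \leq \|\tau\|_{\mathbf L_2}^2$; the second, upon fixing $(v_{2k-1}, v_{2k})$ and computing the volumes of the two disjoint simplices in the remaining odd- and even-indexed variables, is bounded by $\|f\|_{L_2(\Omega', M_r)}^2/((k-1)!)^2$. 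Combining these gives the pointwise bound $|(\mathcal T^{2k} \check Q)(s, \sigma)| \leq \|\tau\|_{\mathbf L_2}^{2k}\,\|\check Q\|_{L_2}/(k-1)!$, which tends to zero as $k \to \infty$; hence $\check Q \equiv 0$ and $Q \equiv 0$. The main obstacle is that \eref{eq.318} couples $Q$ along two distinct one-dimensional slices of $\Omega$ (the diagonal $\{(x, x-s) : x \in [s,1]\}$ and the horizontal $\{(u, s) : u \in [s,1]\}$), so $\mathcal T$ is not itself Volterra; the rotation of variables exhibits $\mathcal T$ as a Volterra integration composed with an argument swap, and only its even powers recover a nested integration region whose factorially small volume drives the Neumann iteration to zero.
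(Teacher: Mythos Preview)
Your existence argument is exactly what the paper does: integrate the differential identity~\eref{eq.38} from $s$ to $x$, using $R_H(s,0)=-\tau(s)$ as the initial value.

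Your uniqueness argument is correct but takes a different route from the paper. The paper proceeds much more directly: for a solution $F\in G_2^+$ of the homogeneous equation it sets
\[
    g(x):=\int_0^x\|F(x,s)\|\,\rmd s,
\]
observes that $g$ is continuous on $[0,1]$ (this is where the $G_2^+$ regularity enters), and from the homogeneous equation and Fubini obtains
\[
    g(x)=\int_0^x\|F(x,x-s)\|\,\rmd s\le\int_0^x\|\tau(u)\|\,g(u)\,\rmd u,
\]
so that Gronwall's inequality forces $g\equiv0$. This is a two-line argument. Your approach---rotate variables, recognise the equation as $(\mathcal I+\mathcal T)\check Q=0$, iterate, and exploit the factorial volume of the nested simplices in the odd- and even-indexed variables---is sound and in fact only uses the $L_2$ structure of $\check Q$, so it establishes uniqueness in a nominally larger class; on the other hand it is considerably longer and the intermediate bookkeeping (the two interleaved chains, the Cauchy--Bunyakowski split, the estimate of the $\tau$-factor by iterated one-variable bounds) is heavier than what the problem requires once one notices that the $G_2^+$ hypothesis makes $g$ available.
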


\begin{proof} To prove uniqueness, it is enough to show that the corresponding homogeneous equation
\begin{equation}\label{eq.319}
      F(x,x-s) =-\int_s^x \tau(u) F(u,s)\,\rmd u,\qquad (x,s)\in \Omega,
\end{equation}
has only zero solution in  $G_2^+$. Let a function $F\in G_2^+$
be a solution of~\eref{eq.319} and set
\[
    g(x):=\int_0^x\|F(x,s)\|\,\rmd s, \qquad x\in [0,1].
\]
Then $g$ is continuous on~$[0,1]$ and, in view of~\eref{eq.319},
\begin{equation*}\fl
    g(x) = \int_0^x\|F(x,x-s)\|\,\rmd s
        \le\int_0^x\|\tau(u)\|\int_0^{u} \|F(u,s)\|\, \rmd s\,\rmd u
        \le \int_0^x\|\tau(u)\|g(u)\,\rmd u.
\end{equation*}
Now Gronwall's inequality implies that $g\equiv 0$, whence~$F=0$.

Let $H\in \mathfrak H_2$ and $\tau=\Theta(H)$. Since
$\tau=\Theta(H)=-R_H(\cdot,0)$,  we get by virtue of
equality~\eref{eq.38} that
\[
    R_H(x,x-t) +\tau(t) +\int\nolimits_t^x\tau(\xi)R_H(\xi,t)\,\rmd \xi=0, \qquad (x,t)\in \Omega.
\]
Thus $R_H$ is a solution of equation~\eref{eq.318}.
\end{proof}

\begin{lemma}\label{le.36}
For every $\tau\in \mathbf L_2$ equation~\eref{eq.318} has a
unique solution~$P_{\tau}$ in~$ G_2^+$ and the mapping $\mathbf
L_2\ni\tau\mapsto P_{\tau}\in G_2^+$ is continuous. Moreover, if
$\tau\in C([0,1],M_r)$, then $P_\tau$ is continuous in $\Omega$,
the function $(x,t)\mapsto P_{\tau}(x,x-t)$ has continuous partial
derivative in the variable~$x$, and
\begin{equation}\label{eq.320}
       \frac{\partial}{\partial x}[P_{\tau}(x,x-t)]=-\tau(x)P_{\tau}(x,t),\qquad (x,t)\in\Omega.
\end{equation}
\end{lemma}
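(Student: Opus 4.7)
The plan is to reduce~\eref{eq.318} to a standard \emph{doubly Volterra} integral equation via a change of variables plus one self-substitution, and then solve it by a Neumann series.

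First I would pass to the characteristic coordinates $(\sigma,\eta)=(x-t,t)$, which put $\Omega$ in bijection with the triangle $\widetilde\Omega:=\{(\sigma,\eta):\sigma,\eta\ge 0,\ \sigma+\eta\le 1\}$ with unit Jacobian. Setting $\rho(\sigma,\eta):=P(\sigma+\eta,\eta)$ and letting $s=\sigma$, $\eta=x-s$ in~\eref{eq.318}, a routine substitution converts the equation into
\[
\rho(s,\eta)+\tau(s)+\int_0^\eta \tau(v+s)\,\rho(v,s)\,\rmd v=0,\qquad (s,\eta)\in\widetilde\Omega. \qquad(\ast)
\]
The integrand features $\rho(v,s)$ with its two arguments swapped relative to the left-hand side $\rho(s,\eta)$, so $(\ast)$ is \emph{not} a one-variable Volterra equation and no direct contraction argument is available.

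The second (and crucial) step is to absorb this swap by iterating $(\ast)$ once: substituting the expression for $\rho(v,s)$ given by $(\ast)$ back into its own right-hand side yields the equivalent equation $\rho=F+\mathscr W\rho$, where
\[
F(s,\eta):=-\tau(s)+\int_0^\eta \tau(v+s)\tau(v)\,\rmd v
\]
and
\[
(\mathscr W\rho)(s,\eta):=\int_0^\eta\!\int_0^s \tau(v+s)\,\tau(w+v)\,\rho(w,v)\,\rmd w\,\rmd v.
\]
The operator $\mathscr W$ on $L_2(\widetilde\Omega,M_r)$ is now a genuinely \emph{doubly} Volterra operator, since it integrates $\rho$ over the rectangle $[0,s]\times[0,\eta]$ that shrinks with $(s,\eta)$. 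A straightforward induction on $n$ based on Cauchy--Schwarz (using that $\int_0^s\|\tau(w+v)\|^2\,\rmd w\le\|\tau\|_{\mathbf L_2}^2$ and similarly for the other factor) yields the pointwise bound
\[
\|(\mathscr W^n\rho)(s,\eta)\|^2 \le \frac{\|\tau\|_{\mathbf L_2}^{4n}\,(s\eta)^{n-1}}{((n-1)!)^2}\,\|\rho\|_{L_2}^2,
\]
whence $\|\mathscr W^n\|_{L_2\to L_2}\le \|\tau\|_{\mathbf L_2}^{2n}/\sqrt{(2n)!}$. So $\mathscr W$ is quasi-nilpotent, $I-\mathscr W$ is invertible via the Neumann series, and $\rho_\tau:=(I-\mathscr W)^{-1}F$ is the unique $L_2$-solution.

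Setting $P_\tau(x,t):=\rho_\tau(x-t,t)$ gives an element of $G_2^+$ that solves~\eref{eq.318}; uniqueness in $G_2^+$ is already supplied by Lemma~\ref{le.35}. Continuity of $\tau\mapsto P_\tau$ follows because $F$ and $\mathscr W$ depend continuously on $\tau$ (in $\mathbf L_2$, respectively in operator norm), while the $\|\mathscr W^n\|$ bounds above are locally uniform in $\tau$, so $(I-\mathscr W)^{-1}$ varies continuously too. For continuous $\tau$ I would repeat the same contraction in $C(\widetilde\Omega,M_r)$ equipped with a weighted sup-norm; the resulting continuous solution must coincide with $P_\tau$ by uniqueness, so $P_\tau$ is continuous on $\Omega$. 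Differentiating~\eref{eq.318} in $x$ (now legitimate since both $\tau$ and $P_\tau$ are continuous) then gives $\frac{\partial}{\partial x}[P_\tau(x,x-t)]=-\tau(x)P_\tau(x,t)$, which is~\eref{eq.320}.

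The principal difficulty is the argument swap in $(\ast)$: it blocks any direct one-variable Volterra or contraction argument. The resolution---iterating once so that the swap is absorbed into a doubly Volterra operator whose iterates decay factorially---is the heart of the proof.
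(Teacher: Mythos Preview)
Your route---change to characteristic coordinates, iterate once so that the argument swap in $(\ast)$ is absorbed into the doubly Volterra operator~$\mathscr W$, then solve by Neumann series---is sound and leads to the same solution as the paper. The paper does not pass to~$\rho$; instead it iterates~\eref{eq.318} directly, producing the explicit multi-integrals
\[
P_{\tau,n+1}(x,s)=\int_{\Pi^*_{n}(x,s)}\tau\bigl(x-s+\xi_{n}(y)\bigr)\tau(y_1)\cdots\tau(y_n)\,\rmd y_1\cdots\rmd y_n,
\qquad \xi_n(y)=\sum_{j=1}^n(-1)^{j+1}y_j,
\]
and bounds $\|P_{\tau,n+1}\|_{G_2}\le(n!)^{-1}\|\tau\|_{\mathbf L_2}^{n+1}$. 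Your one self-substitution is precisely what collapses the alternating $\xi_n$ into a clean product domain, so your bookkeeping is lighter; conversely, the paper never needs to argue that the once-iterated equation is \emph{equivalent} to~$(\ast)$ (you implicitly use that $I-\mathscr V$ is injective, i.e., that $\mathscr V$ itself is quasi-nilpotent---true, but worth saying).

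There is one genuine gap. Your estimates live in $L_2(\widetilde\Omega,M_r)$, so what you actually prove is $\rho_\tau\in L_2$ and continuity of $\tau\mapsto\rho_\tau$ in~$L_2$. The lemma, however, asserts $P_\tau\in G_2^+$ and continuity of $\tau\mapsto P_\tau$ in the $G_2^+$ topology, which is strictly stronger (it demands that $x\mapsto P_\tau(x,\cdot)$ and $t\mapsto P_\tau(\cdot,t)$ be continuous $\mathbf L_2$-valued maps). The sentence ``Setting $P_\tau(x,t):=\rho_\tau(x-t,t)$ gives an element of $G_2^+$'' is not justified by what precedes it. The fix is routine but must be carried out: run the same Cauchy--Schwarz induction for the slice norms $\sup_x\|(\mathscr W^nF)(x,\cdot)\|_{\mathbf L_2}$ and $\sup_t\|(\mathscr W^nF)(\cdot,t)\|_{\mathbf L_2}$ (after undoing the change of variables), obtaining factorial decay in the $G_2$ norm; then the Neumann series converges in~$G_2^+$ and the continuity in~$\tau$ follows from the same locally uniform bounds. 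This is exactly what the paper does termwise, and without it the statement is not proved in the required space.
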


\begin{proof}
For every $n\in\mathbb N$ and $(x,s)\in \Omega$, we set
\begin{equation}\label{eq.321}
            P_{\tau,1}(x,x-s) := \tau(s), \qquad
            P_{\tau,n+1}(x,x-s): = \int_s^x \tau(u) P_{\tau,n}(u,s)\,\rmd u.
\end{equation}
It follows by induction that the recurrence relation~\eref{eq.321} yields the equality
\begin{equation}\label{eq.322}
         P_{\tau,n+1}(x,s)=\int_{\Pi^*_{n}(x,s)}\tau(x-s+\xi_{n}(y))\tau(y_1)\cdots \tau(y_n)\,\rmd y_1\dots\,\rmd y_n,
\end{equation}
where, for $n\in\mathbb N$ and $(x,s)\in \Omega$, we set
$\xi_n(y):=\sum_{j=1}^n (-1)^{j+1} y_j$ and
\[\fl
    \Pi^*_{n}(x,s):=  \{y=(y_1,\dots,y_{n})\in \mathbb R^{n} \mid
        0 \le y_{n} \le y_{n-1} \le \dots \le y_1
            \le  x-s+ \xi_{n}(y) \le x\}.
\]
We extend the functions $P_{\tau,n}$, $n\in \mathbb N$, onto the
square~$[0,1]\times [0,1]$ by setting~$P_{\tau,n}\equiv 0$ in
$\Omega^-$. Since the function~$\tau$ belongs to $\mathbf L_2$,
for any  $n\in \mathbb N$ the functions $P_{\tau,n+1}$ are
continuous in the square $[0,1]^2$. Taking into
account~\eref{eq.321} and using the Cauchy--Bunyakowski inequality
and Fubini's theorem, we find that for every $n\in \mathbb N$ and
$x\in [0,1]$ it holds
\begin{eqnarray*}\fl
     \|P_{\tau,n+1}(x,\,\cdot\,)\|^2_{\mathbf L_2}
      =\int_0^1 ||P_{\tau,n+1}(x,s)||^2\,\rmd s \\
        \le \frac {1}{n!} \int_0^1 \int_{\Pi^*_{n}(x,s)} ||\tau(x-s+\xi_{n-1}({\mathrm y}))||^2\,
                ||\tau(y_1)||^2 \cdots ||\tau(y_{n})||^2\,
                    \rmd y_1 \dots \rmd y_{n}\rmd s\\
      \le  \frac {1}{n!} \int_{\Pi_{n+1}}||\tau(t_1)||^2\cdots ||\tau(t_{n+1})||^2 \,\rmd t_1\dots\,\rmd t_{n+1}\\
      = \frac 1{(n!) (n+1)!} \Bigl(\int_0^1\|\tau(\xi)\|^{2}\,\rmd \xi\Bigr)^{n+1},
\end{eqnarray*}
where $\Pi_n = \{t:= (t_1,\dots,t_n)\in \mathbb{R}^n \mid 0 \le
t_n \le \dots \le t_1\le 1\}$. Therefore
\[
    \|P_{\tau,n+1}(x,\cdot)\|_{\mathbf L_2}\le (n!)^{-1} \|\tau\|_{\mathbf L_2}^{n+1}.
\]
Similarly we obtain that
\[
    \|P_{\tau,n+1}(\,\cdot\,,x)\|_{\mathbf L_2}
        \le (n!)^{-1}\|\tau\|_{\mathbf L_2}^{n+1}.
\]
Now, taking into account continuity of the functions
$P_{\tau,n+1}$, we conclude that $P_{\tau,n+1}$  belongs to
$G_2^+$, and, moreover,
\begin{equation}\label{eq.323}
    \|P_{\tau,n+1}\|_{G_2}
        \le  (n!)^{-1}\|\tau\|_{\mathbf L_2}^{n+1}, \qquad n\in\mathbb N.
\end{equation}
Taking into account~\eref{eq.321} and~\eref{eq.323}, we see
that the function
\begin{equation}\label{eq.324}
          P_{\tau}:=\sum\limits_{n=1}^{\infty}(-1)^nP_{\tau,n}
\end{equation}
is a solution of equation~\eref{eq.318}. Uniqueness of
solution of equation~\eref{eq.318} follows from Lemma~\ref{le.35}.

Next we prove continuous dependence of
$P_\tau$ on~$\tau$. It follows from~\eref{eq.323} and~\eref{eq.324} that it is sufficient to prove continuity of the mappings
\begin{equation}\label{eq.325}
            \mathbf L_2\ni\tau\mapsto P_{\tau,n}\in G_2^+, \qquad n\in\mathbb N.
\end{equation}
Let $\tau_1,\tau_2\in\mathbf L_2$. Using~\eref{eq.322} and the
inequality
\[
    \bigl|\prod_{k=1}^n a_k - \prod_{k=1}^n b_k\bigr|^2
               \le n\sum_{k=1}^n |a_k-b_k|^2
                \prod_{j\ne k} (|a_j| + |b_j|)^2,
\]
valid for all $(a_k)_{k=1}^n$ and $(b_k)_{k=1}^n$ in~$\mathbb C^n$,
and reasoning similarly as when deriving the estimate~\eref{eq.323}, we obtain
the estimate
\begin{equation*}
    \|P_{\tau_1,n+1} -  P_{\tau_2,n+1}\|^2_{G_2^+}
            \le \Bigl(\frac{n+1}{n!}\Bigr)^2\|\tau_1-\tau_2\|^2_{\mathbf L_2}
            \Bigl(\|\tau_1\|_{\mathbf L_2}+ \|\tau_2\|_{\mathbf L_2} \Bigr)^{2n},
\end{equation*}
which yields continuity of the mappings~\eref{eq.325}.

Finally, assume that~$\tau\in C([0,1],M_r)$. Since
the function $P_\tau$ verifies~\eref{eq.318}, $P_\tau$ is continuous, and the function~$(x,s)\mapsto P_\tau(x,x-s)$ is absolutely continuous in~$x$. Differentiation of~\eref{eq.318} in the variable~$x$ leads to~\eref{eq.320}. The proof is complete.
\end{proof}

\begin{proofof}{Proof of Theorem~\ref{th.13}}
For any $\tau\in\mathbf L_2$ we denote by $\mathscr P_{\tau}$ the
integral operator with kernel $P_{\tau}$. It follows from
Lemma~\ref{le.36} that $\mathscr P_{\tau}$ belongs to $\mathfrak
G_2^+$ and that the mapping $\mathbf L_2\ni\tau\mapsto \mathscr
P_\tau\in\mathfrak G_2^+$ is continuous. Hence in view of
Lemma~\ref{le.a3} we obtain that the mapping $\Upsilon$ that acts
from $\mathbf L_2$ to $L_2((-1,1),M_r)$ via
\begin{equation*}
    \left[\Upsilon(\tau)\right](x)=\left[(\mathcal I+\mathscr P_{\tau})^{-1}\tau\right](|x|), \qquad
     x\in (-1,1),
\end{equation*}
is also continuous. Next we show that the mapping $\Upsilon$ is the left inverse of~$\Theta$, i.e., that the relation
\begin{equation}\label{eq.327}
    \Upsilon(\Theta(H))=H
\end{equation}
holds for all~$H\in\mathfrak H_2$. Since the set $\mathfrak H_2\cap C^1([-1,1], M_r)$ is dense everywhere in
$\mathfrak H_2$ and the mappings~$\Theta$ and~$\Upsilon$ are
continuous, it is enough to prove equality~\eref{eq.327} for
$ H\in\mathfrak H_2\cap C^1([-1,1], M_r)$. Assume $H$ is such and set~$\tau=\Theta(H)$. Then it follows from Lemmas~\ref{le.35} and~\ref{le.36} that $P_\tau=R_H$. Therefore, recalling~\eref{eq.110}, we obtain that
\[
    \tau(x)=[\Theta(H)](x)= H(x) +\int\nolimits_0^x P_\tau(x,\xi)
        H(\xi) \,\rmd \xi, \qquad x\in (0,1),
\]
and, therefore, $\Upsilon(\Theta(H))=\Upsilon(\tau)=H$.

It remains to show that~$\Upsilon$ is also the right inverse of~$\Theta$, i.e., that, for
every $\tau\in \mathbf L_2 $,
\begin{equation}\label{eq.328}
 \Upsilon(\tau)\in\mathfrak H_2 \qquad \rm{and} \qquad
  \Theta(\Upsilon(\tau))=\tau.
\end{equation}
Continuity of the mappings  $\Theta$ and $\Upsilon$ implies that it is
enough to prove~\eref{eq.328} for continuous~$\tau$.
In this case for $H:=\Upsilon(\tau)$ we get the relation
\begin{equation}\label{eq.329}
    \tau(x)=H(x)+\int\nolimits_0^x P_\tau(x,\xi) H(\xi)\, \rmd\xi, \qquad x\in [0,1].
\end{equation}
According to Lemma~\ref{le.36}, the function $P_\tau$ is
continuous in $\Omega$, and, therefore, the function~$H$ is continuous as well. Consider a function~$F$ defined for $(x,t)\in\Omega$ by
\[
       F(x,t):=P_{\tau}(x,x-t)+H(t)+\int_0^xP_{\tau}(x,x-\xi)H(\xi-t)\,\rmd\xi.
\]
Then the function~$F$ is continuous in $\Omega$, has a continuous partial derivative in the variable~$x$ by virtue of Lemma~\ref{le.36}, and
\begin{equation*}
        \frac{\partial}{\partial x}[F(x,x-t)]=-\tau(x)F(x,t),\qquad
            (x,t)\in\Omega.
\end{equation*}
Since
\begin{equation*}
    F(t,0)=-\tau(t)+H(t)+\int_0^tP_{\tau}(t,\xi)H(\xi)\,\rmd \xi=0,
        \qquad t\in [0,1],
\end{equation*}
we obtain that
\[
    F(x,x-t)=-\int_t^x\tau(\xi)F(\xi,t)\, \rmd\xi,\qquad t\le x\le 1.
\]
However, as it was shown in the proof of Lemma~\ref{le.35}, the
last equality is only possible for $F\equiv 0$. Therefore $P_\tau$
is a solution of the Krein equation~\eref{eq.19} with
$H=\Upsilon(\tau)$. Now Proposition~\ref{th.a1a} implies that~$H$ is an accelerant; moreover, $\Theta(\Upsilon(\tau))=\tau$ in view of~\eref{eq.329}. Finally, the equality~$\Theta(H^*)=[\Theta(H)]^*$ follows from Lemma~\ref{le.34}~\emph{(i)}. The proof is complete.
\end{proofof}

\subsection{The operators $\mathscr H_o$ and $\mathscr H_e$} \label{subsec.33}

For any $H\in L_2((-1,1),M_r)$  we denote by $\mathscr H_o$ and
$\mathscr H_e$ integral operators  that act in $L_2$ by the
formulas
\begin{equation}\label{eq.330}\fl
    (\mathscr H_\mathrm{o} f)(x):=\int_0^1 H_o(x,t)f(t)\,\rmd t, \qquad
    (\mathscr H_e f)(x):=\int_0^1 H_e(x,t)f(t)\,\rmd t,
\end{equation}
where
\begin{equation*}
\eqalign{
H_{o}(x,t)&:=\frac{1}{2}\left[H\Bigr(\frac{x-t}{2}\Bigr)
        - H\Bigr(\frac{x+t}{2}\Bigr) \right], \\
H_{e}(x,t)&:=\frac{1}{2}\left[H\Bigr(\frac{x-t}{2}\Bigr)
        + H\Bigr(\frac{x+t}{2}\Bigr) \right].}
\end{equation*}

\begin{proposition}\label{pr.37}
Let $H\in\mathfrak H_2$ and $\tau=\Theta(H)$. Then
\begin{equation}\label{eq.332}\fl
          (\mathcal I +\mathscr K_{\tau,N})(\mathcal I +\mathscr H_e)
                    (\mathcal I +\mathscr K_{\tau^*,N}^*) =\mathcal I
          =(\mathcal I+\mathscr K_{\tau,D})(\mathcal I +\mathscr H_o) (\mathcal I+\mathscr K_{\tau^*,D}^*).
\end{equation}
\end{proposition}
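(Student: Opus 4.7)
The plan is to reduce by density to smooth accelerants and then to verify both identities by testing them on a total family of functions arising from the transformation operator.

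First, I would check that the composite operators $(\mathcal I+\mathscr K_{\tau,N})(\mathcal I+\mathscr H_e)(\mathcal I+\mathscr K_{\tau^*,N}^*)$ and its odd counterpart depend continuously on $H\in\mathfrak H_2$ in the operator norm. Indeed, $H\mapsto\tau=\Theta(H)$ is continuous by Lemma~\ref{le.34}(i), hence so is $H\mapsto\tau^*=[\Theta(H)]^*=\Theta(H^*)$; the maps $\tau\mapsto\mathscr K_{\tau,N},\mathscr K_{\tau,D}$ are continuous by Theorem~\ref{th.21}; and $H\mapsto \mathscr H_e,\mathscr H_o$ is continuous in Hilbert--Schmidt norm. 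Since $\mathfrak H_2\cap C^1([-1,1],M_r)$ is dense in $\mathfrak H_2$ by Remark~\ref{re.31}, it is enough to establish the identities for such smooth $H$.

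For smooth $H$, Lemma~\ref{le.34}(ii) gives $R=R_H\in C^1(\Omega,M_r)$ together with the relation $\partial_x[R(x,x-t)]=R(x,0)R(x,t)$ and $\tau(\cdot)=-R(\cdot,0)$. The symmetries in~\eref{eq.33}, combined with $R_H(x,t)=\Gamma_{x,H}(x,t)$, yield $R_{H^*}(x,t)=[R_H(x,t)]^*$, so by Theorem~\ref{th.12} one has $K_{\tau^*,N}(x,t)=[K_{\tau,N}(x,t)]^*$ and $K_{\tau^*,D}(x,t)=[K_{\tau,D}(x,t)]^*$. Consequently $\mathscr K_{\tau^*,N}^*$ is the integral operator on the upper triangle $\{x\le t\}$ whose kernel is $K_{\tau,N}(t,x)$, and likewise for $\mathscr K_{\tau^*,D}^*$.

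Using the intertwining $\Psi_\tau(\mu)c=(\mathcal I+\mathscr K_{\tau,N})\Psi_0(\mu)c$ from~\eref{eq.28}, the first identity is equivalent to the pointwise relation
\[
(\mathcal I+\mathscr H_e)(\mathcal I+\mathscr K_{\tau^*,N}^*)\Psi_\tau(\mu)c=\Psi_0(\mu)c, \qquad \mu\in\mathbb C,\ c\in\mathbb C^r,
\]
since the family $\{\Psi_\tau(\mu)c\}$ is the image of the total family $\{\Psi_0(\mu)c\}$ under the homeomorphism $\mathcal I+\mathscr K_{\tau,N}$ of $L_2$ and is thus itself total. Inserting the representation~\eref{eq.22} of $\psi(t,\mu,\tau)$ and the explicit formula of Theorem~\ref{th.12} for $K_{\tau,N}$, and changing variables by $s=(x\pm t)/2$, I would rewrite the kernels of $\mathscr K_{\tau^*,N}^*$ and $\mathscr H_e$ in terms of $R$ and $H$; the Krein equation~\eref{eq.19} together with the PDE above would then produce the cancellations collapsing the left-hand side to $\sqrt 2\cos\mu x\, c=\Psi_0(\mu)c$.

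The main obstacle is this final verification: the substitutions $s=(x\pm t)/2$ and the nested triangular integrals make it delicate to invoke the Krein equation and the PDE in the correct order, so that the algebraic cancellations between the convolution-type kernel $H(x-t)$ and the Volterra kernel $R_H$ become apparent. The second identity, concerning $\mathscr H_o$, $\mathscr K_{\tau,D}$, and $\mathscr K_{\tau^*,D}^*$, is then obtained by the parallel argument using $\Phi_\tau(\mu)c=(\mathcal I+\mathscr K_{\tau,D})\Phi_0(\mu)c$, the corresponding formula for $K_{\tau,D}$ from Theorem~\ref{th.12}, and $\sin$ in place of $\cos$.
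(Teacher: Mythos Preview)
Your plan is plausible but takes a considerably more roundabout route than the paper, and the step you yourself flag as the ``main obstacle'' is precisely what the paper's argument avoids.

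The paper never reduces to smooth $H$, never tests on the family $\{\Psi_\tau(\mu)c\}$, and never uses the PDE~\eref{eq.38}. Instead it works directly at the kernel level for arbitrary $H\in\mathfrak H_2$. In the Krein equation~\eref{eq.19} one replaces $t$ by $(x-t)/2$ and by $(x+t)/2$ and adds; using Theorem~\ref{th.12} (which expresses $K_{\tau,N}$ as the symmetric combination of $R_H$) and the change of variable $\xi\mapsto (x\pm\xi)/2$ in the integral, one obtains immediately the kernel identity
\[
K_{\tau,N}(x,t)+H_e(x,t)+\int_0^x K_{\tau,N}(x,\xi)\,H_e(\xi,t)\,\rmd\xi=0,\qquad (x,t)\in\Omega.
\]
Replacing $H$ by $H^*$ (and using $\Theta(H^*)=\tau^*$) yields the companion identity with $K_{\tau^*,N}$ and $H_e^*$. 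These are exactly the equations~\eref{eq.a2}/\eref{eq.a3} determining the upper- and lower-triangular factors in the factorization of $\mathcal I+\mathscr H_e$, so Theorem~\ref{th.a1} gives at once
\[
\mathcal I+\mathscr H_e=(\mathcal I+\mathscr K_{\tau,N})^{-1}(\mathcal I+\mathscr K_{\tau^*,N}^*)^{-1}.
\]
The odd case is obtained by subtracting instead of adding.

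So the key idea you are missing is that the substitution $t\mapsto(x\pm t)/2$ in the Krein equation already produces a \emph{closed} kernel identity linking $K_{\tau,N}$ and $H_e$, and that this identity is recognisable as the characterizing equation for the triangular factor in Theorem~\ref{th.a1}. Your density argument, the PDE, and the testing on $\Psi_\tau(\mu)c$ are all unnecessary detours; the ``delicate cancellations'' you anticipate simply do not arise in the paper's approach.
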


\begin{proof}
We shall prove the first
equality in~\eref{eq.332}; the second one is proved
similarly. It follows from the Krein equation~\eref{eq.19} that
\begin{eqnarray*}
    R_H\Bigr(x,\frac{x-t}{2}\Bigr)+H\Bigr(\frac{x+t}{2}\Bigr)
        &+\int_0^x R_H(x,\xi)H\Bigr(\xi-\frac{x-t}{2}\Bigr)\,\rmd\xi=0, \\
    R_H\Bigr(x,\frac{x+t}{2}\Bigr)+H\Bigr(\frac{x-t}{2}\Bigr)
        &+\int_0^x R_H(x,\xi)H\Bigr(\xi-\frac{x+t}{2}\Bigr)\,\rmd\xi=0
\end{eqnarray*}
for $(x,t)\in\Omega$. Combining these relations and taking into account Theorem~\ref{th.12} proved above, we arrive at the equality
\begin{equation}\label{eq.333}
    K_{\tau,N}(x,t)+H_e(x,t)+\int_0^xK_{\tau,N}(x,\xi)H_e(\xi,t)\,\rmd\xi=0
\end{equation}
for all $(x,t)\in\Omega$. By virtue of Lemma~\ref{le.34},
$\Theta(H^*)=[\Theta(H)]^*=\tau^*$. Thus, according to~\eref{eq.333}, we find that for $(x,t)\in\Omega$ it holds
\begin{equation}\label{eq.334}
    K_{\tau^*,N}(x,t)+H^*_e(x,t)+\int_0^xK_{\tau^*,N}(x,\xi)H^*_e(\xi,t)\,\rmd \xi=0.
\end{equation}
Equalities~\eref{eq.333}, \eref{eq.334}, and Theorem~\ref{th.a1}
now lead to the relation
\[
    \mathcal I +\mathscr H_e =(\mathcal I +\mathscr K_{\tau,N})^{-1}
        (\mathcal I +\mathscr K_{\tau^*,N}^*)^{-1},
\]
which yields the first equality in~\eref{eq.332}. The lemma is proved.
\end{proof}

\begin{proposition}\label{pr.38}
If  $\ker(\mathcal I +\mathscr H_e)=\{0\}$, then  $\mathcal
I+\mathscr H_e$ is a bijection of~$W_2^1$.
\end{proposition}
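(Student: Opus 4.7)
I would show that $\mathscr H_e$ is compact on the Banach space $W_2^1$, deduce from the Fredholm alternative that $\mathcal I+\mathscr H_e$ is Fredholm of index zero on $W_2^1$, and then extract injectivity (hence bijectivity) from the $L_2$ hypothesis.

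The key preparatory step is the differentiation identity
\[
   (\mathscr H_e f)'(x)=(\mathscr H_o f')(x)+ \tfrac12\bigl[H(\tfrac{x+1}{2})-H(\tfrac{1-x}{2})\bigr]f(1),\qquad f\in W_2^1.
\]
To derive it, I would use $\partial_xH(\tfrac{x\mp t}{2})=\mp\partial_tH(\tfrac{x\mp t}{2})$ and integrate by parts in $t$ in each of the two integrals assembling $\mathscr H_e f$. The boundary terms at $t=0$ cancel because of the symmetric combination $H(\tfrac{x-t}{2})+H(\tfrac{x+t}{2})$ defining $H_e$, while those at $t=1$ combine into the rank-one contribution displayed above. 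Since $H$ is only $L_2$ a priori, I would justify this rigorously by approximating $H$ in $L_2((-1,1),M_r)$ by smooth even functions and $f$ in $W_2^1$ by $C^1$ functions, and then passing to the limit, using the $L_2$-boundedness of $\mathscr H_e$ and $\mathscr H_o$ together with the continuity of the trace map $W_2^1\ni f\mapsto f(1)$.

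Two consequences follow. First, $\mathscr H_e$ maps $W_2^1$ boundedly into itself, since the right-hand side of the identity lies in $L_2$ with norm controlled by $\|\mathscr H_o\|\,\|f'\|_{L_2}+\|B\|_{L_2}|f(1)|\le C\|f\|_{W_2^1}$, where $B(x):=\tfrac12[H(\tfrac{x+1}{2})-H(\tfrac{1-x}{2})]$ belongs to $L_2((0,1),M_r)$. Second, $\mathscr H_e\colon W_2^1\to W_2^1$ is compact: identifying $W_2^1$ with a closed subspace of $L_2\times L_2$ via $f\mapsto(f,f')$, the map $f\mapsto(\mathscr H_e f,(\mathscr H_e f)')$ splits into the composition of $W_2^1\hookrightarrow L_2$ with $\mathscr H_e$ on $L_2$ (compact, since $\mathscr H_e$ is Hilbert--Schmidt on $L_2$), the composition $f\mapsto f'\mapsto \mathscr H_o f'$ (compact, for the same reason applied to $\mathscr H_o$), and the finite-rank map $f\mapsto f(1)\,B$.

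With compactness on $W_2^1$ in hand, the Fredholm alternative reduces bijectivity to injectivity, and any $f\in W_2^1\subset L_2$ annihilated by $\mathcal I+\mathscr H_e$ must vanish by hypothesis. The principal obstacle is the derivative identity itself: making the integration by parts fully rigorous for merely $L_2$ kernels and $W_2^1$ data requires a careful approximation argument, after which the conclusion reduces to standard Fredholm theory on a Banach space.
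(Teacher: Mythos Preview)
Your proposal is correct and follows essentially the same route as the paper: establish the differentiation identity $(\mathscr H_e f)'=\mathscr H_o f'-H_o(\cdot,1)f(1)$ (your boundary term is just $-H_o(\cdot,1)$ rewritten using the evenness of~$H$), conclude that $\mathscr H_e$ is bounded and compact on $W_2^1$, and invoke the Fredholm alternative together with the $L_2$ hypothesis. The only difference is in the compactness step: the paper proves $\|\mathscr H_e\|_{W_2^1\to W_2^1}\le C\|H\|_{L_2}$ and then approximates $H$ by trigonometric polynomials (for which $\mathscr H_e$ has finite rank), whereas you use the Hilbert--Schmidt property of $\mathscr H_e$ and $\mathscr H_o$ on $L_2$ directly via the graph embedding $W_2^1\hookrightarrow L_2\times L_2$; both arguments are equally short and standard.
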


\begin{proof} It is enough to prove that $\mathscr H_e$ maps $W_2^1$ to itself and that the operator $\mathscr H_e: W_2^1\to W_2^1$ is compact.
If $H\in C^1([-1,1],M_r)$, then integration by parts gives
\begin{equation}\label{eq.335}
    (\mathscr H_e f)'= \mathscr H_o f' - H_o(\cdot,1)f(1),    \qquad f\in W_2^1.
\end{equation}
Using the fact that the operator of differentiation considered on the domain~$W_2^1$ is closed in~$L_2$ and that the space~$C^1([-1,1],M_r)$ is dense in~$L_2((-1,1),M_r)$, we conclude that the equality~\eref{eq.335} holds also
for an arbitrary function $H\in L_2((-1,1),M_r)$. The equality~\eref{eq.335}
implies that the operator $\mathscr H_e$ maps the space $W_2^1$ to
itself; moreover,
\begin{equation}\label{eq.345}
                 \|\mathscr H_e\|_{W_2^1\to W_2^1} \le C\|H\|_{L_2((-1,1),M_r)},
\end{equation}
where $C$ is a constant independent of~$H$. If the function $H$ is a
trigonometric polynomial, then~$\mathscr H_e$ is an operator of finite rank. Approximating an arbitrary function~$H\in L_2((-1,1),M_r)$ by a sequence of
trigonometric polynomials and taking into account~\eref{eq.345},
we conclude that the operator~$\mathscr H_e: W_2^1\to W_2^1$ is compact.
\end{proof}

\section{The inverse spectral problem}\label{sec.4}

\subsection{Proof of Theorem~\ref{th.41}}\label{subsec.41}

We shall base the proof of Theorem~\ref{th.41} on the following two lemmas.

\begin{lemma}\label{le.42}
Let for a sequence $\mathfrak
a=((\lambda_j,\alpha_j))_{j\in\mathbb Z_+}$ condition $(A_1)$ hold
and $\nu=\nu^\mathfrak a$. Then the limit~\eref{eq.18} exists in
the topology of the space~$L_2((-1,1),M_r)$. Moreover, the
function $H=H_\nu$ is even and hermitian, and for the operators
$\mathscr H_e$ and $\mathscr H_o$ the following relations hold:
\begin{equation} \label{eq.43}
   \mathcal I +\mathscr H_e=\mathscr U^e_{\mathfrak a,0}, \qquad
   \mathcal I +\mathscr H_o=\mathscr U^o_{\mathfrak a,0}.
\end{equation}
\end{lemma}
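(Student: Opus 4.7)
The plan is to identify both sides of the identities~\eref{eq.43} as integral operators on $L_2$ and to show their kernels coincide, while extracting the $L_2$-convergence in~\eref{eq.18} as a byproduct. To start, the kernels of the elementary summands of $\mathscr U^e_{\mathfrak a,0}$ and $\mathcal I$ are easy to read off: since $[\Psi_0(\lambda)c](x) = \sqrt 2 \cos(\lambda x)c$, the operator $\widehat P_j := \Psi_0(\lambda_j)\alpha_j\Psi_0^*(\lambda_j)$ is integral with kernel $2\cos(\lambda_j x)\alpha_j\cos(\lambda_j t)$, and $P_{n,0}$ has kernel $2\cos(\pi n x)\cos(\pi n t)I$ for $n\ge 1$ and $I$ for $n=0$. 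Applying the trigonometric identity $2\cos(\lambda x)\cos(\lambda t) = \cos(\lambda(x-t)) + \cos(\lambda(x+t))$ term by term, the formal kernel of $\mathscr U^e_{\mathfrak a,0}-\mathcal I$ equals
\[
    \int_0^\infty \bigl[\cos(\lambda(x-t)) + \cos(\lambda(x+t))\bigr]\,\rmd(\nu-\nu_0)(\lambda),
\]
which is precisely $H_e(x,t) = \tfrac{1}{2}[H_\nu((x-t)/2) + H_\nu((x+t)/2)]$ in view of the definition of $H_\nu$ in~\eref{eq.18}. The companion identity for $\mathscr H_o$ follows from $2\sin(\lambda x)\sin(\lambda t) = \cos(\lambda(x-t)) - \cos(\lambda(x+t))$.

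To make this rigorous, I would first note that each summand $\sum_{\lambda_j\in\Delta_n}\widehat P_j - P_{n,0}$ has rank at most $r(1+\sup_n\#\{\lambda_j\in\Delta_n\}) < \infty$ by~$(A_1)$. Combined with the operator-norm bound from Lemma~\ref{le.212} applied with $\tau=0$ and the elementary inequality $\|T\|_{HS}\le\sqrt{\mathrm{rank}(T)}\|T\|$, this upgrades to Hilbert--Schmidt summability, so the series defining $\mathscr U^e_{\mathfrak a,0}-\mathcal I$ converges in HS norm; equivalently, its kernel $V_N(x,t)$ (the natural truncation of the expression above) converges in $L_2((0,1)^2,M_r)$. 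Each $V_N$ has the additive form $V_N(x,t) = \phi_N(x-t) + \phi_N(x+t)$ with $\phi_N(s) := \int_0^{\pi(N+1/2)}\cos(\lambda s)\,\rmd(\nu-\nu_0)(\lambda)$ even and $H_{\nu,N}(y) = 2\phi_N(2y)$. Cauchy-ness of $\{V_N\}$ in $L_2((0,1)^2)$ together with injectivity and boundedness from below of the map $\phi\mapsto\phi(x-t)+\phi(x+t)$ on even elements of $L_2((-2,2),M_r)$ (verified by expanding $\phi$ in the cosine basis $\{\cos(\pi k s/2)\}_{k\ge 0}$ and computing the resulting quadratic form on $(0,1)^2$) forces $\{\phi_N\}$ to be Cauchy in $L_2((-2,2),M_r)$; consequently $H_{\nu,N}$ converges in $L_2((-1,1),M_r)$, establishing the limit~\eref{eq.18}.

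Evenness of $H_\nu$ is immediate from $\cos(-2\lambda x) = \cos(2\lambda x)$, and the Hermitian property follows from $\alpha_j\in M_r^+$ together with the reality of the cosines, both properties being preserved under the $L_2$-limit. The main obstacle will be the inversion step in the second paragraph: showing that the map $\phi\mapsto\phi(x-t)+\phi(x+t)$ is bounded below on even $L_2$-functions, needed to pass $L_2$-convergence from $V_N$ to $\phi_N$. An alternative route that sidesteps this is to work directly with the partial sums $H_{\nu,N}$: writing the $n$th increment as $g_n = -2\cos(2\pi n\cdot)\beta_n + u_n$ with $u_n := \sum_{\lambda_j\in\Delta_n}[2\cos(2\lambda_j\cdot) - 2\cos(2\pi n\cdot)]\alpha_j$, the first piece forms an orthogonal sequence in $L_2((-1,1),M_r)$ whose squared norms sum to $4\sum\|\beta_n\|^2 < \infty$ by $(A_1)$, while the remainder $\{u_n\}$ has Fourier spectrum concentrated near frequencies $\pm 2\pi n$, and the three clauses of $(A_1)$ furnish exactly the near-orthogonality estimates needed to establish $L_2$-convergence of $\sum u_n$.
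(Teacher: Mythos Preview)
Your kernel identifications in the first paragraph are correct and are how the paper obtains~\eref{eq.43} once $L_2$-convergence of $H_{\nu,N}$ is in hand: one checks the finite-rank identity $\mathscr H_{e,N}=\sum_{\lambda_j\le\pi(N+1/2)}\Psi_0(\lambda_j)\alpha_j\Psi_0^*(\lambda_j)-\sum_{k=0}^N P_{k,0}$ and lets $N\to\infty$. The inversion route in your second paragraph, however, has a genuine gap: the map $\phi\mapsto\phi(x-t)+\phi(x+t)$ from even $L_2((-2,2))$ to $L_2((0,1)^2)$ is injective but \emph{not} bounded below. Take $\phi_\delta$ even with $\phi_\delta(s)=\delta^{-1/2}$ for $|s|\in[2-\delta,2]$ and $\phi_\delta=0$ elsewhere; then $\phi_\delta(x-t)\equiv0$ on $(0,1)^2$ (since $|x-t|<1$), while $\phi_\delta(x+t)$ is supported in the corner triangle $\{x+t\ge2-\delta\}$ of area $\delta^2/2$, so the image has $L_2$-norm $(\delta/2)^{1/2}\to0$ although $\|\phi_\delta\|_{L_2(-2,2)}=\sqrt2$. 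Hilbert--Schmidt convergence of $V_N$ therefore cannot be transferred back to $\phi_N$, and the cosine-basis computation you propose does not rescue this because $\{\cos(\pi kx/2)\}_{k\ge0}$ is not orthogonal on $(0,1)$.

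Your alternative at the end is the right one and is exactly the paper's argument. The detail replacing your ``near-orthogonality'' is the second-order expansion
\[
  \cos 2\lambda_j x-\cos 2\pi nx = -2x\widetilde\lambda_j\sin 2\pi nx - \vartheta_{n,j}(x),
  \qquad \sup_{|x|\le1}|\vartheta_{n,j}(x)|\le 6|\widetilde\lambda_j|^2,
\]
which splits $u_n$ into $-4x\sin(2\pi nx)\gamma_n$ with $\gamma_n:=\sum_{\lambda_j\in\Delta_n}\widetilde\lambda_j\alpha_j$ (orthogonal in $n$, with $\sum\|\gamma_n\|^2<\infty$ by $(A_1)$) plus an absolutely summable remainder. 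The series for $H_\nu$ thus breaks into three pieces $\cos(2\pi nx)\beta_n$, $x\sin(2\pi nx)\gamma_n$, and $\sum_j\vartheta_{n,j}\alpha_j$, each controlled by a single clause of $(A_1)$; you already have the first piece, and the other two are what is missing.
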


\begin{proof}
In view of~\eref{eq.16} and~\eref{eq.17} we have to show that the series
\begin{equation} \label{eq.44}
    \sum_{n=1}^{\infty}\Bigl[\cos(2\pi nx)I
      - \sum_{\lambda_j\in\Delta_n}\cos(2\lambda_jx)\,\alpha_j\Bigr]
\end{equation}
converges in  $L_2((-1,1),M_r)$.  Let $\widetilde{\lambda}_j$ and $\beta_n$ be given by~\eref{eq.218}. Then
\[
    \cos 2\lambda_j x=\cos 2\pi nx
        - 2 x \widetilde{\lambda}_j\,\sin 2\pi nx -\vartheta_{n,j}(x),
         \qquad  n\in\mathbb N,\quad \lambda_j\in\Delta_n,
\]
where
\[
    \vartheta_{n,j}(x)
            :=(1-\cos 2x\widetilde{\lambda}_j)\cos 2\pi nx
            - (2\widetilde{\lambda}_jx - \sin 2\widetilde{\lambda}_j x)
                \sin 2\pi nx.
\]
This yields the equality
\[\fl
    \cos(2\pi nx)I - \sum_{\lambda_j\in\Delta_n}\cos(2\lambda_jx)\alpha_j
        = \cos(2\pi nx)\beta_n+2x\sin (2\pi nx)\gamma_n
            + \sum_{\lambda_j\in\Delta_n}\vartheta_{n,j}(x) \alpha_j,
\]
in which
$\gamma_{n}:=\sum\limits_{\lambda_j\in\Delta_n}\widetilde{\lambda}_j\alpha_j$.
According to condition $(A_1)$
\begin{equation}\label{eq.45m}\fl
    \sum_{n=1}^{\infty}\|\beta_n\|^2< \infty, \quad
       \sup\limits_{j\in\mathbb Z_+}\|\alpha_j\|<\infty,\quad
         \sum_{j=1}^{\infty} |\widetilde{\lambda}_j|^2< \infty, \quad
             \sup\limits_{n\in\mathbb N} \sum\limits_{\lambda_j\in\Delta_n} 1 <\infty,
\end{equation}
which, in particular, implies that~$\sum_{n=1}^{\infty}\|\gamma_n\|^2<\infty$. From the above we conclude that the series
 $\sum_{n=1}^\infty\cos(2\pi nx)\beta_n$
and
 $\sum_{n=1}^\infty\sin (2\pi nx)\gamma_n$
converge in the topology of the space~$L_2((-1,1),M_r)$. It is easy to see that
\[
  \max\limits_{|s|\le 1}|\vartheta_{n,j}(s)|\le 6|\widetilde{\lambda}_j|^2,
                                                   \qquad n\in\mathbb{N}, \qquad \lambda_j\in\Delta_n.
\]
Therefore,
\[
    \sum_{n=1}^{\infty} \sum_{\lambda_j\in\Delta_n}
            \|\vartheta_{n,j}\alpha_j \|_{L_2((-1,1),M_r)}
        \le 6\left(\sup\limits_{j\in\mathbb Z_+}\|\alpha_j\| \right)
            \sum_{j=1}^{\infty} |\widetilde{\lambda}_j|^2 <\infty .
\]
Combining the above, we conclude that the series~\eref{eq.44}
converges. The fact that the function~$H_\nu$ is even and
hermitian is obvious.

Now construct operators $\mathscr H_e$ and $\mathscr H_o$ via formulas~\eref{eq.330} taking there $H=H_\nu$. We shall prove the first equality in~\eref{eq.43}, since the second one is proved similarly. Let
\[
    H_{\nu,n}(x):= \int_0^{\pi(n+1/2)}
          2\cos(2\lambda x) \, \rmd(\nu-\nu_0)(\lambda), \qquad
x\in [-1,1], \qquad n\in\mathbb N.
\]
We denote by $\mathscr H_{e,n}$ the operator in~$L_2$ that acts via the formula
\[
    (\mathscr H_{e,n}f)(x)
        :=\frac{1}{2}\int_0^1 \Bigl[H_{\nu,n}\Bigl(\frac{x-t}{2}\Bigr)
            + H_{\nu,n}\Bigl(\frac{x+t}{2}\Bigr) \Bigr] f(t)\, \rmd t,
        \qquad x\in [-1,1].
\]
Since $ H_{\nu,n}\to H_\nu$ as $n\to \infty$, the sequence~$(\mathscr H_{e,n})_{n\in\mathbb N}$ converges in the operator norm to the
operator $\mathscr H_e$. It is easy to see that
\[
    \mathscr H_{e,n}
        =\sum\limits_{\lambda_j\le\pi (n+1/2)} \Psi_0(\lambda_j)\alpha_j\Psi^*_0(\lambda_j)
        -\sum\limits_{k=0}^n P_{k,0}.
\]
The required equality follows now from~\eref{eq.215} and~\eref{eq.238}.
\end{proof}

\begin{lemma}\label{le.43}
Let a sequence $\mathfrak a=((\lambda_j,\alpha_j))_{j\in\mathbb
Z_+}$ verifies condition~$(A_1)$. Then the following implications are
true:
      $(A_3) \Longrightarrow (A_4)$
and
      $ [(A_4)\wedge (\lambda_0=0, \alpha_0>0)]\Longrightarrow (A_3)$.
\end{lemma}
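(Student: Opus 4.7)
The plan rests on Lemmas~\ref{le.213} and~\ref{le.42}: together they translate $(A_3)$ and $(A_4)$ into the injectivity on $L_2$ of the operators $\mathcal I+\mathscr H_e=\mathscr U^e_{\mathfrak a,0}$ and $\mathcal I+\mathscr H_o=\mathscr U^o_{\mathfrak a,0}$ respectively, where $H=H_\nu$ for $\nu=\nu^{\mathfrak a}$. Both implications will then be obtained from integration-by-parts arguments that convert cosine orthogonality conditions into sine ones and vice versa.

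For $(A_4)\wedge(\lambda_0=0,\alpha_0>0)\Rightarrow(A_3)$, which is the easier direction, I would take $f\in L_2$ with $\alpha_j\int_0^1\cos(\lambda_j t)f(t)\,\rmd t=0$ for every $j\in\mathbb Z_+$. Positive-definiteness of $\alpha_0$ turns the $j=0$ condition into $\int_0^1 f=0$, so $F(x):=\int_0^x f(t)\,\rmd t$ lies in $W_{2,0}^1$. Integration by parts with $F(0)=F(1)=0$ then yields $\int_0^1\cos(\lambda_j t)f(t)\,\rmd t=\lambda_j\int_0^1\sin(\lambda_j t)F(t)\,\rmd t$ for $j\ge 1$, hence $\alpha_j\int_0^1\sin(\lambda_j t)F(t)\,\rmd t=0$; condition $(A_4)$ forces $F\equiv 0$, and so $f=0$.

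For the converse $(A_3)\Rightarrow(A_4)$, which is the main step, I would take $g\in L_2$ with $\alpha_j\int_0^1\sin(\lambda_j t)g(t)\,\rmd t=0$ for all $j\ge 1$ and set $\tilde G(x):=-\int_x^1 g(t)\,\rmd t\in W_2^1$, so that $\tilde G(1)=0$ and $\tilde G'=g$. Integration by parts gives $\alpha_j\int_0^1\cos(\lambda_j t)\tilde G(t)\,\rmd t=0$ for every $j\ge 1$, and the series representation~\eref{eq.238} then collapses to $\mathscr U^e\tilde G=2\cos(\lambda_0 x)\alpha_0 A$ with $A:=\int_0^1\cos(\lambda_0 t)\tilde G(t)\,\rmd t$. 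Independently, the derivative identity $((\mathcal I+\mathscr H_e)f)'=(\mathcal I+\mathscr H_o)f'-H_o(\cdot,1)f(1)$ from the proof of Proposition~\ref{pr.38}, applied with $f=\tilde G$ and $\tilde G(1)=0$, yields $(\mathscr U^e\tilde G)'=\mathscr U^o g=0$, so that $\mathscr U^e\tilde G$ is a constant function. Comparing the two descriptions gives $\lambda_0\sin(\lambda_0 x)\alpha_0 A\equiv 0$; when $\lambda_0>0$ this forces $\alpha_0 A=0$, hence $\mathscr U^e\tilde G=0$, and injectivity from $(A_3)$ delivers $\tilde G=0$ and $g=0$.

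The residual case $\lambda_0=0$, in which $\mathscr U^e\tilde G=2\alpha_0 A$ is automatically constant for any $A\in\mathbb C^r$, is the main technical obstacle and will be the part requiring the most work. To close it I would invoke the bijectivity of $\mathcal I+\mathscr H_e$ on $W_2^1$ (Proposition~\ref{pr.38}), the boundary constraint $\tilde G(1)=0$, and the companion identity $((\mathcal I+\mathscr H_o)f)'=(\mathcal I+\mathscr H_e)f'+H(x/2)f(0)-H_e(x,1)f(1)$, derived exactly as~\eref{eq.335}, to set up a finite-dimensional system of linear conditions on $\alpha_0 A\in\Ran\alpha_0$ whose only solution is $\alpha_0 A=0$, after which $\tilde G=0$ and $g=0$ as in the $\lambda_0>0$ case.
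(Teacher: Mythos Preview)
Your argument for $(A_4)\wedge(\lambda_0=0,\alpha_0>0)\Rightarrow(A_3)$ is correct and in fact more direct than the paper's: the paper proves density of the cosine system by pushing $J(\mathcal I+\mathscr H_o)L_2$ through the series~\eref{eq.43}, whereas you prove triviality of the orthogonal complement by a single integration by parts on the primitive~$F$. Both routes rest on the same identification~\eref{eq.43}, but yours avoids the operator~$J$ entirely.

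For $(A_3)\Rightarrow(A_4)$ your argument is fine when $\lambda_0>0$, but the case $\lambda_0=0$ is a genuine gap rather than a technicality. After your reductions you are left with $\tilde G\in W_2^1$, $\tilde G(1)=0$, and $(\mathcal I+\mathscr H_e)\tilde G=c$ a constant vector, and you must conclude $c=0$. Using Proposition~\ref{pr.38} this amounts to showing that the linear map $c\mapsto\bigl((\mathcal I+\mathscr H_e)^{-1}c\bigr)(1)$ is injective on constants. But if you try to prove this via the derivative identity~\eref{eq.335}, you find that $c$ lies in its kernel precisely when $h:=(\mathcal I+\mathscr H_e)^{-1}c$ satisfies $h(1)=0$ and $h'\in\ker(\mathcal I+\mathscr H_o)$; the map $g\mapsto c$ is then a bijection between $\ker(\mathcal I+\mathscr H_o)$ and $\ker T$, so the ``finite-dimensional system'' you allude to is exactly equivalent to the statement you are trying to prove. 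The companion identity for $\mathscr H_o$ introduces the unknown boundary value $\tilde G(0)$ and $(\mathcal I+\mathscr H_e)g$, and does not break the circularity.

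The paper sidesteps this by arguing dually. Rather than taking $g$ in the orthogonal complement of the sine system, it uses Proposition~\ref{pr.38} to write an \emph{arbitrary} $g\in L_2$ as $g=\bigl((\mathcal I+\mathscr H_e)f\bigr)'$ for some $f\in W_2^1$, expands $(\mathcal I+\mathscr H_e)f=\sum_{j\ge0}\Psi_0(\lambda_j)c_j$ with $c_j=\alpha_j\Psi_0^*(\lambda_j)f$, checks via an integration by parts and Lemma~\ref{le.211} that $(\lambda_jc_j)\in\ell_2$, and then differentiates termwise to obtain $g=-\sum_{j\ge1}\lambda_j\Phi_0(\lambda_j)c_j$. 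The $j=0$ term drops out automatically upon differentiation (whether $\lambda_0=0$ or not), so no case distinction is needed.
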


\begin{proof}
Let conditions $(A_1)$ and $(A_3)$ hold. Then it follows
from~\eref{eq.43} and~\eref{eq.239} that $\ker(\mathcal I+\mathscr
H_e)=\{0\}$. Hence, taking into account Proposition~\ref{pr.38},
we obtain that $(\mathcal I+\mathscr H_e) W_2^1=W_2^1$.  Therefore, for an
arbitrary $g\in L_2$ there exists $f\in W_2^1$ such that
$g=[(\mathcal I+\mathscr H_e)f]'$. Put
$c_j:=\alpha_j\Psi_0^*(\lambda_j)f$, $j\in\mathbb Z_+$. In view
of~\eref{eq.43} and~\eref{eq.238}
\begin{equation}\label{eq.46}
   (\mathcal I+\mathscr H_e)f=\sum\limits_{j=0}^\infty\Psi_0(\lambda_j)c_j.
\end{equation}
Taking into account the relation
\[\fl
    \lambda_j\Psi_0^*(\lambda_j)f=\sqrt2\int\limits_0^1(\lambda_j\cos\lambda_j t)\,f(t)\, \rmd t
        = (-1)^n\sqrt2(\sin\widetilde\lambda_j) f(1)
        -\Phi_0^*(\lambda_j)f',\qquad \lambda_j\in\Delta_n,
\]
the inequalities in~\eref{eq.45m}, and Lemma~\ref{le.211}, we conclude that
$\sum_{j=0}^\infty \|\lambda_j c_j\|^2_{\mathbb C^r}<\infty$. In view of Lemma~\ref{le.211} the series in~\eref{eq.46} is termwise differentiable and therefore
    $g=-\sum\limits_{j=1}^\infty\lambda_j\Phi_0(\lambda_j)c_j$.
Since $g$ is arbitrary, the system $\{d\sin \lambda_j x\mid
j\in\mathbb{N}, d\in \Ran\alpha_j\}$ is complete in the space
$L_2$, and the implication~$(A_3)\Longrightarrow (A_4)$ follows.

Now let the conditions $(A_1)$, $(A_4)$ hold and let $\lambda_0=0$
and $\alpha_0>0$. Then $\ker(\mathcal I+\mathscr H_o)=\{0\}$ by
virtue of~\eref{eq.43} and \eref{eq.239}. Since the operator
$\mathscr H_o:L_2\to L_2$ is compact, $\Ran(\mathcal I +\mathscr
H_o)=L_2$. Let $J: L_2\to L_2$ be an integration operator, i.e.,
\[
    (Jf)(x)=\int_0^x f(t)\, \rmd t.
\]
The range of the operator $ J$ is everywhere dense in~$L_2$.
Therefore the range of the operator $J(\mathcal I +\mathscr H_o)$
is also everywhere dense in~$L_2$. In view of~\eref{eq.43} for an
arbitrary $f\in L_2$ we get
\[
   J(\mathcal I+\mathscr H_o)f =\sum\limits_{j=1}^\infty J\Phi_0(\lambda_j)\alpha_j\Phi_0^*(\lambda_j)f
   =\sum\limits_{j=1}^\infty
\frac{\sqrt2}{\lambda_j}d_j-\sum\limits_{j=1}^\infty\frac1{\lambda_j}\Psi_0(\lambda_j)d_j,
\]
where $d_j:=\alpha_j\Phi_0^*(\lambda_j)f$. Since the matrix~$\alpha_0$ is nonsingular, the vector~$\sum_{j=1}^\infty \sqrt2d_j/{\lambda_j}$ is in the range of~$\alpha_0$. Therefore the closed
linear hull of the system
\[
    \{(\cos\lambda_j x)\,d \mid  j\in\mathbb Z_+,\  d\in \Ran\alpha_j\}
\]
 contains the range of~$J(\mathcal I +\mathscr H_o)$ and thus $(A_3)$ holds.
\end{proof}

\begin{proofof}{Proof of Theorem~\ref{th.41}}
In view of Lemma~\ref{le.42} only the second part of the theorem
needs to be proved. In fact, it suffices to prove that conditions
$(A_1)$ and $(A_3)$ imply that $H\in \Re\mathfrak H_2$.

Let therefore conditions $(A_1)$ and $(A_3)$ hold. It follows then
from~\eref{eq.43}, \eref{eq.239}, and Lemma~\ref{le.43} that
\begin{equation}\label{eq.47}
    \mathcal I +\mathscr H_e>0,\qquad
    \mathcal I +\mathscr H_o >0.
\end{equation}
Let us show that~\eref{eq.47} implies positivity of the operator
$\mathcal I +\mathscr H$. Let $L_{2,e}$ and
$L_{2,o}$ be subspaces of $L_2$ consisting of functions that are respectively even and odd with respect to $1\over2$, i.e.,
\[\fl
    L_{2,e}:=\{f\in L_2\,|\,f(1-x)=f(x) \}, \qquad
    L_{2,o}:=\{f\in L_2 \,|\,f(1-x)=-f(x)\}.
\]
$L_{2,e}$ and $L_{2,o}$ are invariant subspaces of the operator~$\mathscr H$. Consider unitary operators $\mathscr A_o: L_2\to L_{2,o}$ and
$\mathscr A_e: L_2\to L_{2,e}$ acting via the formulas
\[\fl
    (\mathscr A_og)(x)=\mathrm{sgn\,}(2x-1)g(|2x-1|),\qquad
    (\mathscr A_eg)(x)=g(|2x-1|), \quad x\in (0,1).
\]
Simple calculations show that for an arbitrary $g\in L_2$ we have
\[ \fl
    ((\mathcal I +\mathscr H)\mathscr A_o g|\mathscr A_o g)
        =((\mathcal I +\mathscr H_o) g|g),
    \qquad
    ((\mathcal I +\mathscr H)\mathscr A_e g|\mathscr A_e g)
        =((\mathcal I +\mathscr H_e)g|g).
\]
Taking into account~\eref{eq.47} and the equality $L_{2,o}\oplus
L_{2,e}=L_2$, we obtain that $\mathcal I +\mathscr H>0$. Therefore
due to~\eref{eq.31} the function~$H$ belongs to $\Re\mathfrak
H_2$.
\end{proofof}

\subsection{Proof of sufficiency in Theorems~\ref{th.14} and~\ref{th.15}}\label{subsec.42}

In Section~\ref{sec.2} the necessity parts of Theorems~\ref{th.14}
and~\ref{th.15} was justified, and we still have to establish the
sufficiency parts. By virtue of Lemma~\ref{le.43} it suffices to
prove the following theorem.

\begin{theorem}\label{th.44}
Let for a sequence $\mathfrak
a=((\lambda_j,\alpha_j))_{j\in\mathbb Z_+}$ conditions
$(A_1)$--$(A_3)$ hold and let $\nu=\nu^\mathfrak a$
and~$\tau=\Theta(H_\nu)$. Then $\tau\in\Re\mathbf L_2$ and
$\mathfrak a=\mathfrak a_\tau$.
\end{theorem}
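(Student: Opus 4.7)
The plan is to verify, in order: (a) $\tau\in\Re\mathbf L_2$; (b) $\mathscr U^e_{\mathfrak a,\tau}=\mathcal I=\mathscr U^o_{\mathfrak a,\tau}$; and (c) that these identities, together with the corresponding relations for $\mathfrak a_\tau$ supplied by Remark~\ref{rm.213}, force $\mathfrak a=\mathfrak a_\tau$.

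For (a), Theorem~\ref{th.41} applied under $(A_1)$ and $(A_3)$ gives $H:=H_\nu\in\Re\mathfrak H_2$, so $H^*=H$, and Theorem~\ref{th.13} yields $\tau^*=[\Theta(H)]^*=\Theta(H^*)=\Theta(H)=\tau$. For (b), Lemma~\ref{le.42} identifies $\mathcal I+\mathscr H_e=\mathscr U^e_{\mathfrak a,0}$, and Proposition~\ref{pr.37} (which requires $\tau=\tau^*$) gives $(\mathcal I+\mathscr K_{\tau,N})(\mathcal I+\mathscr H_e)(\mathcal I+\mathscr K_{\tau,N}^*)=\mathcal I$; comparing this with~\eref{eq.242}, whose right-hand side factorizes $\mathscr U^e_{\mathfrak a,\tau}$ in exactly the same way, one concludes $\mathscr U^e_{\mathfrak a,\tau}=\mathcal I$. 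The odd case runs identically with $\mathscr H_o$ and $\mathscr K_{\tau,D}$ in place of $\mathscr H_e$ and $\mathscr K_{\tau,N}$.

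For (c), Remark~\ref{rm.213} gives $\mathscr U^e_{\mathfrak a_\tau,\tau}=\mathcal I=\mathscr U^o_{\mathfrak a_\tau,\tau}$, so one can cancel the invertible Volterra factors $\mathcal I+\mathscr K_{\tau,N}$ and $\mathcal I+\mathscr K_{\tau,D}$ in~\eref{eq.242} (applied to both $\mathfrak a$ and $\mathfrak a_\tau$) to obtain $\mathscr U^e_{\mathfrak a,0}=\mathscr U^e_{\mathfrak a_\tau,0}$ and $\mathscr U^o_{\mathfrak a,0}=\mathscr U^o_{\mathfrak a_\tau,0}$. Theorem~\ref{th.25} ensures that $(A_1)$ also holds for $\mathfrak a_\tau$, so Lemma~\ref{le.42} applies to $\mathfrak a_\tau$ as well and yields $\mathscr H_{\nu,e}=\mathscr H_{\nu_\tau,e}$ and $\mathscr H_{\nu,o}=\mathscr H_{\nu_\tau,o}$. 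Equating the kernels via the formulas following~\eref{eq.330} and using the evenness of $H_\nu$ and $H_{\nu_\tau}$, one obtains $H_\nu=H_{\nu_\tau}$ in $L_2((-1,1),M_r)$.

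The main obstacle is the final step: passing from $H_\nu=H_{\nu_\tau}$ to $\nu=\nu_\tau$, which is equivalent to $\mathfrak a=\mathfrak a_\tau$. Writing $H_\nu-H_{\nu_\tau}$ as the $L_2$-limit of Fourier-cosine sums of the signed atomic measure $\nu-\nu_\tau=\sum_l c_l\delta_{\mu_l}$ with $\{\mu_l\}=\{\lambda_j\}\cup\{\lambda_k(\tau)\}$, the identity reads $\sum_l c_l\cos(2\mu_l x)=0$ in $L_2((-1,1),M_r)$. The near-arithmetic distribution of the $\mu_l$ guaranteed by $(A_1)$ places them in the non-harmonic Fourier setting, where separation of frequencies gives a Riesz-basis-type uniqueness forcing each $c_l=0$. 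Since each $\alpha_j$ and each $\alpha_k(\tau)$ is assumed non-zero, this identifies $\{\lambda_j\}$ with $\{\lambda_k(\tau)\}$ and matches the corresponding matrix weights, completing the proof.
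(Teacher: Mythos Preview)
Your steps (a) and (b) are correct and coincide with the opening moves of the paper's proof of Lemma~\ref{le.44}. The derivation $H_\nu=H_{\nu_\tau}$ in (c) is also sound. The gap is the final step.

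The appeal to ``Riesz-basis-type uniqueness'' for the combined frequency set $\{\mu_l\}=\{\lambda_j\}\cup\{\lambda_k(\tau)\}$ is not justified and in general fails. Both sequences satisfy $(A_1)$, so each clusters near the lattice~$\pi\mathbb N$; their union therefore has, for large~$n$, up to~$2r$ points in each interval~$\Delta_n$ with no separation bound whatsoever between a $\lambda_j$ and a nearby $\lambda_k(\tau)$. Systems $\{\cos(2\mu_l x)\}$ with this density and without separation are neither Riesz sequences nor $\omega$-independent in $L_2(-1,1)$, so the implication $\sum_l c_l\cos(2\mu_l x)=0\Rightarrow c_l=0$ cannot be read off from standard non-harmonic Fourier theory. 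A further warning sign is that your argument never invokes condition~$(A_2)$: if it worked as written, $(A_1)\wedge(A_3)$ alone would already characterise~$\mathfrak A$, contradicting the role of~$(A_2)$ as an independent hypothesis.

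The paper's route is different in kind. After establishing $\mathscr U^e_{\mathfrak a,\tau}=\mathcal I=\mathscr U^o_{\mathfrak a,\tau}$ exactly as you do, it uses $(A_2)$ together with the rank-counting Lemmas~\ref{le.b2} and~\ref{le.b3} to upgrade the decomposition $\sum_j\widehat P_j=\mathcal I$ to genuine pairwise orthogonality of the~$\widehat P_j$ and~$\widehat Q_k$ (Lemma~\ref{le.44}). That orthogonality, combined with an integration-by-parts identity linking $\Psi_\tau^*(\lambda_k)\Psi_\tau(\lambda_j)$ and $\Phi_\tau^*(\lambda_k)\Phi_\tau(\lambda_j)$, yields $\varphi(1,\lambda_j,\tau)\alpha_j=0$, hence $\Ran\widehat P_j\subset\ker(S_\tau-\lambda_j^2\mathcal I)$. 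Completeness of the $\widehat P_j$ then forces $\widehat P_j=P_{j,\tau}$, and~\eref{eq.29} recovers $\lambda_j=\lambda_j(\tau)$, $\alpha_j=\alpha_j(\tau)$. The essential missing ingredient in your proposal is this orthogonality step, which is precisely where~$(A_2)$ enters.
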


First we prove the following lemma.

\begin{lemma}\label{le.44}
Let the assumptions of Theorem~\ref{th.44} hold. Then the formulas
\begin{equation} \label{eq.48}
   \widehat P_j=\Psi_\tau(\lambda_j)\alpha_j\Psi^*_\tau(\lambda_j), \qquad
   \widehat Q_k=\Phi_\tau(\lambda_k)\alpha_k\Phi^*_\tau(\lambda_k)
\end{equation}
determine complete systems  $(\widehat P_j)_{j\in\mathbb Z_+}$ and
$(\widehat Q_j)_{j\in\mathbb N}$ of pairwise orthogonal projectors.
\end{lemma}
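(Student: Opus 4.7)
The plan is to proceed in three stages: first show that $\tau\in\Re\mathbf L_2$; then establish the partition-of-unity identities $\sum_j\widehat P_j=\mathcal I$ and $\sum_j\widehat Q_j=\mathcal I$; and finally upgrade these partitions by nonnegative self-adjoint operators to systems of pairwise orthogonal projectors. I focus on the $\widehat P_j$'s in detail, as the argument for the $\widehat Q_j$'s is entirely analogous with Neumann quantities replaced by Dirichlet ones.

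The first two stages are essentially bookkeeping. Theorem~\ref{th.41}, applied under $(A_1)$ and $(A_3)$ (the latter yielding $(A_4)$ via Lemma~\ref{le.43}), ensures $H_\nu\in\Re\mathfrak H_2$, whence in particular $H_\nu^*=H_\nu$. Since by Theorem~\ref{th.13} the map $\Theta$ commutes with the $*$-operation, $\tau^*=\Theta(H_\nu^*)=\Theta(H_\nu)=\tau$, so $\tau\in\Re\mathbf L_2$. Next, Lemma~\ref{le.42} identifies $\mathcal I+\mathscr H_e=\mathscr U^e_{\mathfrak a,0}$, and conjugating by $\mathcal I+\mathscr K_{\tau,N}$ via~\eref{eq.242} produces
\[
   \mathscr U^e_{\mathfrak a,\tau}=(\mathcal I+\mathscr K_{\tau,N})(\mathcal I+\mathscr H_e)(\mathcal I+\mathscr K_{\tau,N}^*).
\]
Proposition~\ref{pr.37}, whose hypothesis $\tau^*=\tau$ is now available, forces this product to equal $\mathcal I$; comparison with~\eref{eq.238} yields $\sum_{j\in\mathbb Z_+}\widehat P_j=\mathcal I$ in the strong operator topology. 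The analogous computation with Dirichlet quantities gives $\sum_{j\in\mathbb N}\widehat Q_j=\mathcal I$.

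The heart of the argument, and where I expect the main obstacle, is the third stage. Each $\widehat P_j$ is manifestly nonnegative self-adjoint of finite rank equal to $\rank\alpha_j$ (using the injectivity of $\Psi_\tau(\lambda_j)$ from~\eref{eq.29}), and Lemma~\ref{le.212} furnishes the quantitative proximity
\[
   \sum_{n=1}^\infty\Bigl\|P_{n,0}-\sum_{\lambda_j\in\Delta_n}\widehat P_j\Bigr\|^2<\infty
\]
between the blockwise sums of the $\widehat P_j$'s and the genuine orthogonal projectors $P_{n,0}$ of the free problem. Combining this $\ell^2$-closeness with the partition-of-unity identity established above, I plan to invoke the auxiliary lemmata on orthogonal projectors collected in~\ref{add.2}---in the same spirit as Lemma~\ref{le.b1}, which was used to extract the rank identity~$(A_2)$ in Theorem~\ref{th.29}---to conclude that each $\widehat P_j$ is idempotent and that $\widehat P_j\widehat P_k=0$ for $j\ne k$, and similarly for the $\widehat Q_j$'s.
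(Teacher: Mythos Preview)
Your first two stages are correct and coincide with the paper's argument: the paper likewise uses Proposition~\ref{pr.37} together with~\eref{eq.43} and~\eref{eq.242} to obtain $\mathscr U^e_{\mathfrak a,\tau}=\mathcal I=\mathscr U^o_{\mathfrak a,\tau}$, hence the partition-of-unity identities~\eref{eq.410}.

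The third stage, however, has a genuine gap. You never invoke condition~$(A_2)$, and without it the conclusion fails: a strongly convergent resolution $\sum_j\widehat P_j=\mathcal I$ by nonnegative finite-rank operators that is blockwise $\ell^2$-close to the free projectors does \emph{not} by itself force the $\widehat P_j$ to be projectors, let alone pairwise orthogonal. The analogy with Lemma~\ref{le.b1} is misleading here, since that lemma presupposes that both sequences already consist of orthogonal projectors---exactly what is at stake. The paper's argument runs as follows. Fix $N$ large enough that the tail $\sum_{n>N}\|P_{n,0}-\sum_{\lambda_j\in\Delta_n}\widehat P_j\|^2<1$ and that~$(A_2)$ holds for $N$. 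Set $A:=\sum_{n>N}\sum_{\lambda_j\in\Delta_n}\widehat P_j$ and $A_k:=\sum_{\lambda_j\in\Delta_k}\widehat P_j$. Since $\rank\widehat P_j=\rank\alpha_j$, condition~$(A_2)$ gives $\rank A_k\le r=\rank P_{k,0}$, so Lemma~\ref{le.b2} applies and yields $\codim\Ran A\ge (N+1)r$. Now~$(A_2)$ again guarantees $\sum_{n\le N}\sum_{\lambda_j\in\Delta_n}\rank\widehat P_j=(N+1)r\le\codim\Ran A$, and Lemma~\ref{le.b3} converts the identity $A+\sum_{n\le N}\sum_{\lambda_j\in\Delta_n}\widehat P_j=\mathcal I$ into the statement that $\{\widehat P_j:\lambda_j\le\pi N+\pi/2\}$ are pairwise orthogonal projectors. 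Letting $N\to\infty$ finishes the proof. You should make the dependence on~$(A_2)$ and the roles of Lemmas~\ref{le.b2} and~\ref{le.b3} explicit.
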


\begin{proof}
Taking into account Proposition~\ref{pr.37} and
equalities~\eref{eq.43} and~\eref{eq.242}, we arrive at the
equality $\mathscr U^e_{\mathfrak a,\tau}=\mathcal I =\mathscr
U^o_{\mathfrak a,\tau}$ for $\tau=\Theta(H_{\nu})$ and
$\nu=\nu^\mathfrak a$. Hence, by virtue of~\eref{eq.238} and
Lemma~\ref{le.212}, we conclude that
\begin{equation}\label{eq.410}
    \sum_{n=0}^\infty\sum_{\lambda_j\in\Delta_n}\widehat P_j
        =\mathcal I
    = \sum_{n=1}^\infty\sum_{\lambda_j\in\Delta_n}\widehat Q_j,
\end{equation}
and that both series converge in the strong operator topology. We show
that $(\widehat P_j)_{j\in\mathbb Z_+}$ is the sequence of pairwise orthogonal projectors; the proof for~$(\widehat Q_j)_{j\in\mathbb N}$ is analogous.

In view of Lemma~\ref{le.212},
\begin{equation*}
    \sum_{n=1}^\infty\|P_{n,0}
        -\sum_{\lambda_j\in\Delta_n}\widehat P_j\|^2 <\infty.
\end{equation*}
Thus there exists a natural $N_0$ such that
\begin{equation}\label{eq.411}
    \sum_{n=N_0}^\infty\|P_{n,0}
        -\sum_{\lambda_j\in\Delta_n}\widehat P_j \|^2 <1.
\end{equation}
Moreover, $N_0$ can be taken so large that~$(A_2)$ holds, i.e., that for every~$N\ge N_0$ it holds
\begin{equation}\label{eq.412}
    \sum_{n=0}^{N}\sum\limits_{\lambda_j\in\Delta_n}\rank \alpha_j
        = (N+1)r.
\end{equation}
Let us fix $N> N_0$  and set
\[
    A=\sum_{n=N+1}^\infty \sum_{\lambda_j\in\Delta_n}\widehat P_j,
        \qquad
    P=\sum_{n=0}^{N}P_{n,0},
        \qquad
    A_k=\sum_{\lambda_j\in\Delta_k}\widehat P_j.
\]
Since $\rank \widehat P_j=\rank\alpha_j$ for all $j\in\mathbb Z_+$ in view of~\eref{eq.29} and~\eref{eq.48},  we conclude by virtue of~\eref{eq.412} that, for all~$k > N_0$,
\begin{equation}\label{eq.414}
    \rank A_k\le \sum\limits_{\lambda_j\in\Delta_k} \rank\widehat P_j
        =\sum\limits_{\lambda_j\in\Delta_k}\rank\alpha_j
        = r=\rank P_{k,0}.
\end{equation}
Since $(P_{n,0})_{n\in\mathbb Z_+}$  is a complete sequence of
pairwise orthogonal projectors, by virtue of~\eref{eq.411},
\eref{eq.414}, and Lemma~\ref{le.b2} we obtain that
\begin{equation}\label{eq.415}
     \codim\Ran A\ge\rank P=(N+1)r.
\end{equation}
It follows from~\eref{eq.410}, \eref{eq.412}, and~\eref{eq.415}
that
\[
 \eqalign{%
               A+\sum\limits_{n=0}^N \sum\limits_{\lambda_j\in\Delta_n}\widehat P_j=\mathcal I,\\
               \sum\limits_{n=0}^N\sum\limits_{\lambda_j\in\Delta_n}
                \rank\widehat P_j
               =  \sum\limits_{n=0}^N\sum\limits_{\lambda_j\in\Delta_n}
                    \rank \alpha_j = (N+1)r  \le\codim\Ran A.}
\]
The above relations imply by virtue of Lemma~\ref{le.b3}
that the set $\{\widehat P_j :\lambda_j\le \pi N+\pi/2\}$
is a set of pairwise orthogonal projectors. Since $N$ was
arbitrary, we conclude that~$(\widehat P_j)_{j\in\mathbb Z_+}$ is
a sequence of pairwise orthogonal projectors. The proof is complete.
\end{proof}

\begin{proofof}{Proof of Theorem~\ref{th.44}.}
Let the assumptions of Theorem~\ref{th.44} hold and let $\widehat P_j$
and  $\widehat Q_j$ be operators of Lemma~\ref{le.43}. It
follows from Theorems~\ref{th.41} and~\ref{th.13} that
$\tau\in\Re\mathbf L_2$. Theorem~\ref{th.44} will be proved if we
show that
\begin{equation}\label{eq.417}
    \Ran \widehat P_j\subset \ker(S_\tau-\lambda^2_j\mathcal I), \qquad
        j\in\mathbb Z_+.
\end{equation}
Indeed, \eref{eq.417} together with~\eref{eq.410} imply that
$\lambda_j(\tau)=\lambda_j$ for all $j\in\mathbb Z_+$. The last
equality  and~\eref{eq.417} yield the
inequality~$P_{j,\tau}-\widehat P_j\ge 0$, $j\in\mathbb Z_+$.
However, in view of~\eref{eq.410} and~\eref{eq.215} we have
\[
    \sum_{j=0}^\infty (P_{j,\tau}-\widehat P_j) =0.
\]
Thus $P_{j,\tau}-\widehat P_j=0$, $j\in\mathbb Z_+$,  and, therefore,
(see~\eref{eq.216} and~\eref{eq.48})
\[
    \Psi_\tau(\lambda_j)[\alpha_j(\tau)-\alpha_j]\Psi_\tau^*(\lambda_j)=0,
        \qquad  j\in\mathbb Z_+.
\]
Now by virtue of~\eref{eq.29} we obtain the equality
$\alpha_j(\tau)=\alpha_j$, $j\in\mathbb Z_+$, i.e., the equality
$\mathfrak a=\mathfrak a_\tau$.

It thus indeed only remains to prove~\eref{eq.417}. In view
of~\eref{eq.27} and~\eref{eq.21} it is enough to show that
 $\varphi(1,\lambda_j,\tau)\alpha_j=0$, $j\in\mathbb Z_+$. Let $j,k\in\mathbb Z_+$ and $c,d\in \mathbb C^r$.
Since $\tau=\tau^*$, taking into account~\eref{eq.27}
and~\eref{eq.21} and integrating by parts, we find that
\begin{eqnarray*}
    \lambda_j (\Psi(\lambda_j)c|\Psi(\lambda_k)d)_{\mathbb C^r}
        &= \Bigl(\Bigl(\frac{\rmd}{\rmd x}-\tau\Bigr)
            \Phi(\lambda_j)c|\Psi(\lambda_k)d\Bigr)_{\mathbb C^r}\\
        &= 2\Bigl(\varphi(1,\lambda_j,\tau)c
            |\psi(1,\lambda_k,\tau)d\Bigr)_{\mathbb C^r}
            +\lambda_k \Bigl(\Phi(\lambda_j)c|\Phi(\lambda_k)d\Bigr)_{\mathbb C^r}
\end{eqnarray*}
and, therefore,
$$
   \lambda_j \Psi^*(\lambda_k)\Psi(\lambda_j)-\lambda_k \Phi^*(\lambda_k)\Phi(\lambda_j)=2\psi^*(1,\lambda_k,\tau)\varphi(1,\lambda_j,\tau).
$$
It follows from the orthogonality of $(\widehat P_j)_{j\in\mathbb
Z_+}$ and $(\widehat Q_j)_{j\in\mathbb N}$ and
relations~\eref{eq.29} that
$$
    \alpha_k \Psi_\tau^*(\lambda_k)\Psi_\tau(\lambda_j)\alpha_j=0
    = \alpha_k\Phi_\tau^*(\lambda_k)\Phi_\tau(\lambda_j)\alpha_j,
    \qquad k\ne j.
$$
Using this, we obtain the equality
\begin{equation}\label{eq.420}
    \left(\sum\limits_{\lambda_k\in\Delta_n} (-1)^n\alpha_k
        \psi^*(1,\lambda_k,\tau)\right)\varphi(1,\lambda_j,\tau)\alpha_j=0,
        \qquad n\in \mathbb N, \lambda_j\notin\Delta_n.
\end{equation}
Since (see~\eref{eq.213})
\[
    \psi(1,\lambda_k,\tau)= \cos\lambda_k \,I +\int_0^1 (\cos\lambda_k t)K _{\tau,N}(1,t)\,\rmd t,
    \qquad k\in\mathbb N,
\]
we can use the Riemann--Lebesgue lemma and the asymptotic behavior
of the sequences $(\lambda_k)$ and $(\alpha_k)$ to prove that
\[
    \lim\limits_{n\to\infty} \sum\limits_{\lambda_k\in\Delta_n}(-1)^n\psi(1,\lambda_k,\tau)\alpha_k
     = \lim\limits_{n\to\infty}\sum\limits_{\lambda_k\in\Delta_n}\alpha_k
        =I.
\]
Passing to the limit in~\eref{eq.420} as $n\to \infty$, we derive
the equality $\varphi(1,\lambda_j,\tau)\alpha_j=0$.
Theorem~\ref{th.44} is proved.
\end{proofof}

\subsection{Proof of Proposition~\ref{pr.16} and Theorems~\ref{th.1uniq} and~\ref{th.17}} \label{subsec.43}

\begin{proofof}{Proof of Proposition~\ref{pr.16}.}
It follows from Theorems~\ref{th.14} and \ref{th.15} and
Lemma~\ref{le.43} proved above that it is sufficient to prove the
implication $(\mathfrak a\in\mathfrak A) \Longrightarrow
(\lambda_0=0, \alpha_0>0)$. This fact, however, is an immediate
corollary of Theorem~\ref{th.24}.
\end{proofof}

\begin{proofof}{Proof of Theorem~\ref{th.1uniq}.}
$(i)$ Let $\tau_1,\tau \in \Re \mathbf L_2$ and $\mathfrak
a_{\tau_1}=\mathfrak a_{\tau}=\mathfrak a$. In view
of~\eref{eq.250} $\mathscr U^e_{\frak a,\tau_1}=\mathcal
I=\mathscr U^e_{\frak a,\tau}$. We next prove the following
implication:
\begin{equation}\label{eq.424}
\mathscr U^e_{\frak a,\tau_1}=\mathscr U^e_{\frak
a,\tau}\Longrightarrow\mathscr K_{\tau_1,N}=\mathscr K_{\tau,N}.
\end{equation}
 According to~\eref{eq.43}, $\mathscr U^e_{\frak a,0}=\mathcal I
+\mathscr H_e$, and thus we conclude that the operator $(\mathscr
U^e_{\frak a,0}- \mathcal I)$ belongs to $\mathcal B_2$. Since
$\mathscr U^e_{\frak a,0}>0$ (see~\eref{eq.47}),  the operator
$\mathscr U^e_{\frak a,0}$ admits a factorization:
\[
      \mathscr U^e_{\frak a,0}=(\mathcal I +\mathscr K)^{-1}(\mathcal I+\mathscr K^*)^{-1},\qquad
       \mathscr K\in\mathcal B_2^+.
\]
It follows from the equality $\mathscr U^e_{\frak
a,\tau_1}=\mathscr U^e_{\frak a,\tau}$ and~\eref{eq.242} that
\begin{eqnarray*}
(\mathcal I+\mathscr K_{\tau_1,N})(\mathcal I+\mathscr K)^{-1}
    &(\mathcal I+\mathscr K^*)^{-1}(\mathcal I+\mathscr K^*_{\tau_1,N})\\
                  & =(\mathcal I+\mathscr K_{\tau,N})(\mathcal I+\mathscr K)^{-1}
                     (\mathcal I+\mathscr K^*)^{-1}(\mathcal I+\mathscr K^*_{\tau,N}).
\end{eqnarray*}
Therefore, in view of Theorem~\ref{th.a1} we have
\[(\mathcal I+\mathscr K_{\tau_1,N})(\mathcal I+\mathscr K)^{-1}
 =(\mathcal I+\mathscr K_{\tau,N})(\mathcal I+\mathscr K)^{-1},
\] which implies that $\mathscr K_{\tau_1,N}=\mathscr K_{\tau,N}$.

It follows now from Lemma~\ref{le.22}~\emph{(iii)} and~\eref{eq.424} that~$\tau_1=\tau$.

$(ii)$  Let $q_1,q\in\Re\mathfrak M$ and  $\mathfrak
b_{q_1}=\mathfrak b_q$. Let us fix $\tau_1\in b^{-1}(q_1)$ and
$\tau\in b^{-1}(q)$. Since  $\mathfrak b_{q_1}=\mathfrak b_q$, the
sequences $\mathfrak a=\mathfrak a_\tau$ and $\mathfrak
a_1=\mathfrak a_{\tau_1}$ differ at most in the first element. Thus, taking into account~\eref{eq.250}, we
obtain that
\begin{equation}\label{eq.455}
        \mathscr U^o_{\mathfrak a,\tau}=\mathscr U^o_{\mathfrak a_\tau,\tau}=\mathcal I
        = \mathscr U^o_{\mathfrak a_{\tau_1},\tau_1}=\mathscr U^o_{\mathfrak a,\tau_1}.
\end{equation}
Reasoning as in the proof of~\eref{eq.424}, we establish the
implication
\[
    (\mathscr U^o_{\frak a,\tau_1} = \mathscr U^o_{\frak a,\tau})
            \Longrightarrow
    (\mathscr K_{\tau_1,D}=\mathscr K_{\tau,D}).
\]
Therefore, taking into account~\eref{eq.455} and the statement $(ii)$ of Lemma~\ref{le.22}, we get that $b(\tau_1)=b(\tau)$, i.e., $q_1=q$.
The proof is complete.
\end{proofof}

\begin{proofof}{Proof of Theorem~\ref{th.17}.}
 Part $(i)$ is a straightforward consequence of Theorems~\ref{th.44} and~\ref{th.1uniq}. Let us prove part $(ii)$. Assume that $q\in\Re\mathfrak M$
 and that $\mathfrak b=\mathfrak b_q$ belongs to~$\mathfrak B$.
 Augmenting the sequence $\mathfrak b$ to a sequence $\mathfrak a$ with $\lambda_0=0$ and $\alpha_0=I$, we get by Proposition~\ref{pr.16} that~$\mathfrak a\in  \mathfrak A$. Set
 \[
     \nu:=\nu^{\mathfrak a}=I\delta_0 +\mu^{\mathfrak b},\qquad \tau:=\Theta(H_\nu).
 \]
 From the already proved statement $(i)$ we obtain that $\mathfrak a=\mathfrak a_{\tau}$. Then $\mathfrak b=\mathfrak b_{\tilde q}$ for~$\tilde q=b(\tau)$. It follows from
Theorem~\ref{th.1uniq} that  $q=\tilde q=b(\tau)$. The proof is complete.
\end{proofof}

\ack  {The authors express their gratitude
to DFG for financial support of the project \mbox{436 UKR 113/84} and
thank the Internationales Zentrum f\"{u}r Komplexe Sys\-te\-me (IZKS)
of Bonn University for the warm hospitality. The research of the first author was partially supported by the Ukrainian DFFD grant F28.1/017.}

\appendix

\def\thesection{Appen\-dix~\Alph{section}}

\section{\textbf{The spaces}}\label{add.1}

\def\thesection{\Alph{section}}

\subsection{Sobolev spaces}

Suppose that $X$ is a Banach space. Denote by $L_p((a,b),X)$,
$1\le p<\infty$, the Banach space of all strongly measurable functions
$f:(a,b)\to X$, for which the norm
\[
    ||f||_{L_p((a,b),X)}
        :=\left(\int_a^b\|f(t)\|_{X}^p\,\rmd t\right)^{1/p}
\]
is finite. Denote by~$C^k([a,b],X)$ the Banach space of $k$ times
continuously differentiable functions~$[a,b]\to X$ with the standard
norm.

We also denote by~$W_p^{1}((0,1),X)$, $1\le p<\infty$, the Sobolev
space that is the completion of the linear space $C^1([0,1],X)$ by
the norm
\begin{equation*}
    \|f\|_{W_p^1}
    := {\left(\int_0^1(\|f(t)\|^p_X\,\rmd t\right)}^{1/p}
        +{\left(\int_0^1\|f'(t)\|^p_X)\,\rmd t\right)}^{1/p}.
\end{equation*}
Every function $f\in W_p^1((0,1),X)$ has the derivative $f'$ belonging to
$L_p((0,1),X)$.

We denote by~$M_r$ the Banach algebra of $r\times r$
matrices with complex entries. In the standard way the algebra
$M_r$ is identified with the Banach algebra of linear operators
$A:\mathbb C^r\to\mathbb C^r$ and inherits the operator norm
\[
      \|A\|=\sup\limits_{\|y\|_{\mathbb{C}^r}=1}\|Ay\|_{\mathbb{C}^r},
\]
where $\|\,\cdot\,\|_{\mathbb{C}^r}$ is the Euclidean norm
of~$\mathbb{C}^r$  generated by the standard scalar
product~$(\cdot\,|\,\cdot)_{\mathbb{C}^r}$. We shall write $L_2$
instead of $L_2((0,1),\mathbb{C}^r)$  and denote by
$(\cdot\,|\,\cdot)$ the scalar product in $L_2$, i.e.,
\[
    (f\,|\,g)=\int_0^1 (f(x)\,|\,g(x))_{\mathbb C^r}\,\rmd x,  \qquad f,g\in L_2.
\]
We also use the following notations:
\[
    \mathbf L_p:=L_p((0,1),M_r), \qquad
    \Re \mathbf L_p:=\{u\in \mathbf L_p \mid u=u^*\}, \qquad
    p\geq1,
\]
\[
    W_p^s:=W_p^s((0,1),\mathbb{C}^r),\qquad
    \mathbf W_p^s:=W_p^s((0,1),M_r) \qquad(s\in\mathbb{Z}, p\geq 1).
\]
Set $W_{2,0}^1(0,1)=\{f\in W_2^{1}(0,1)\,|\,f(0)=f(1)=0\}$. Recall
(see~\cite{AF}) that $W^{-1}_2(0,1)$ is the dual space
of~$W^{1}_{2,0}(0,1)$. Denote by $\mathbf
W^{-1}_2$ the Banach space of matrices
$f=(f_{ij})_{1\le i,j\le r}$ with entries $f_{ij}\in
W^{-1}_2(0,1)$ and with the inherited norm
\[
    \|f\|_{\mathbf W^{-1}_2}
        =\left(\sum_{1\le i,j\le r}\|f_{ij}\|^2_{W^{-1}_2(0,1)}\right)^{1/2}.
\]
Let $\Re\mathbf W^{-1}_2:=\{f\in\mathbf W^{-1}_2 \mid f=f^*\}$.
Here $f^*:=(\overline f_{ji})_{1\le i,j\le r}$ for
$f=(f_{ij})_{1\le i,j\le r}$.

\subsection{Factorization of operators}

We state below some necessary facts related to the factorization
of Fredholm operators~\cite[Ch.4]{GGK1}. Denote by $\mathcal B_2$
the Hilbert space of all Hilbert--Schmidt operators in~$L_2$.
Every operator $\mathscr K\in\mathcal B_2$ is an integral operator
with kernel $K\in L_2((0,1)^2,M_r)$. Set $\Omega:=\{(x,t) | 0\le
t\le x\le 1\}, \Omega^-:=[0,1]^2\setminus\Omega$ and denote by
$\mathcal B^+_2$ and $\mathcal B^-_2$ the subalgebras of $\mathcal
B_2$ consisting of all operators with kernels $K$ satisfying the
condition $K(x,t)=0$ a.e. in $\Omega^-$ and $K(x,t)=0$ a.e. in
$\Omega$, respectively. It is obvious that $\mathcal B_2=\mathcal
B^+_2\oplus\mathcal B^-_2$; moreover, the operators in $\mathcal
B_2^\pm$ are Volterra ones~\cite[Ch.~IV]{GGK1}. Denote by $\mathscr P^+$
an orthogonal projector in~$\mathcal B_2$ onto~$\mathcal B^+_2$.

We say that an operator $\mathcal I +\mathscr K$ with $\mathscr
K\in\mathcal B_2$ admits a factorization in $\mathcal B_2$ if
\begin{equation}\label{eq.a1}
   \mathcal I +\mathscr K=(\mathcal I +\mathscr K_+)^{-1}(\mathcal I +\mathscr K_-)^{-1}
\end{equation}
with some $\mathscr K_+\in\mathcal B_2^+$ and $\mathscr
K_-\in\mathcal B_2^-$.

\begin{theorem}\label{th.a1}
Assume that $\mathscr K\in\mathcal B_2$ with kernel $K$ is such that
$\mathcal I+\mathscr K$ admits a factorization in $\mathcal B_2$.
  Then the operators $\mathscr K_+$ and $\mathscr K_-$
in~\eref{eq.a1} are determined uniquely by $\mathscr K$. Moreover,
the operators $\mathscr K_+$ and $\mathscr K^*_-$ are solutions for $\mathscr X\in\mathcal B_2$  of the equations
\begin{equation}\label{eq.a2}
 \mathscr X +\mathscr P^+\mathscr K +\mathscr P^+\mathscr X\mathscr K
=0,\qquad \mathscr X +\mathscr P^+\mathscr K^*+\mathscr
P^+\mathscr X\mathscr K^* =0
\end{equation}
respectively.
For the operator $\mathcal I+\mathscr K$ to admit a factorization
in $\mathcal B_2$, it is necessary and sufficient that one of the
following conditions should hold:
  \begin{enumerate}
    \item for every $a\in [0,1]$ the integral equation
\[
    f(x)+\int_0^a K(x,t) f(t)\,\rmd t =0, \qquad  x\in (0,1),
\]
has only the trivial solution in $L_2$;
  \item at least one of the equations in~\eref{eq.a2} has a solution.
  \end{enumerate}
\end{theorem}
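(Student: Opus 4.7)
The plan is to prove the theorem in three stages: uniqueness together with derivation of the defining equations~\eref{eq.a2}, the easy implications from existence of the factorization to conditions (i) and (ii), and finally the non-trivial converse that (i) (or (ii)) suffices for factorizability. Throughout, I would use the basic structural facts that $\mathcal B_2 = \mathcal B_2^+ \oplus \mathcal B_2^-$, that operators in $\mathcal B_2^\pm$ are Volterra (so $\mathcal I + \mathscr K_\pm$ is invertible with inverse of the form $\mathcal I + \mathscr T_\pm$ for some $\mathscr T_\pm \in \mathcal B_2^\pm$), and that $\mathscr P^+$ is the projector onto $\mathcal B_2^+$ along $\mathcal B_2^-$.

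For uniqueness, suppose $(\mathcal I + \mathscr K_+)^{-1}(\mathcal I + \mathscr K_-)^{-1} = (\mathcal I + \widetilde{\mathscr K}_+)^{-1}(\mathcal I + \widetilde{\mathscr K}_-)^{-1}$. Rearranging yields $(\mathcal I + \widetilde{\mathscr K}_+)(\mathcal I + \mathscr K_+)^{-1} = (\mathcal I + \widetilde{\mathscr K}_-)^{-1}(\mathcal I + \mathscr K_-)$. The left-hand side equals $\mathcal I + \mathscr X_+$ for some $\mathscr X_+ \in \mathcal B_2^+$ and the right-hand side equals $\mathcal I + \mathscr X_-$ for some $\mathscr X_- \in \mathcal B_2^-$; thus $\mathscr X_+ = \mathscr X_-$ belongs to $\mathcal B_2^+ \cap \mathcal B_2^- = \{0\}$, giving $\widetilde{\mathscr K}_\pm = \mathscr K_\pm$. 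To derive~\eref{eq.a2}, multiply the factorization by $\mathcal I + \mathscr K_+$ on the left: $(\mathcal I + \mathscr K_+)(\mathcal I + \mathscr K) = (\mathcal I + \mathscr K_-)^{-1}$. Subtracting $\mathcal I$ from both sides puts the right-hand side in $\mathcal B_2^-$; applying $\mathscr P^+$ therefore annihilates it and gives $\mathscr K_+ + \mathscr P^+ \mathscr K + \mathscr P^+ \mathscr K_+ \mathscr K = 0$, which is the first equation with $\mathscr X = \mathscr K_+$. The second equation follows upon taking adjoints in the factorization, noting that $\mathscr K_-^* \in \mathcal B_2^+$ plays the role of $\mathscr K_+$ relative to $\mathscr K^*$.

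The implication \emph{factorization} $\Rightarrow$ (ii) is then immediate, and factorization $\Rightarrow$ (i) follows by showing that if $(x,t) \mapsto K(x,t)$ admits the factorization, then for each $a \in [0,1]$ the restricted operator $\mathcal I + \chi_a \mathscr K \chi_a$ is invertible. Indeed, since $\chi_a$ commutes with the triangular cutoff structure on $\mathcal B_2^\pm$ (that is, $\chi_a \mathscr T_\pm \chi_a (\mathcal I + \chi_a \mathscr K_\pm \chi_a) = \chi_a$ whenever $\mathscr T_\pm = (\mathcal I + \mathscr K_\pm)^{-1} - \mathcal I$), one obtains an explicit inverse of $\mathcal I + \chi_a \mathscr K \chi_a$ on $\chi_a L_2$, from which triviality of the null space follows. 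For the reverse direction (i) $\Rightarrow$ \emph{factorization}, I would follow the Gohberg--Krein scheme: under (i), define $\Gamma_a := -(\mathcal I + \chi_a \mathscr K \chi_a)^{-1} \chi_a \mathscr K \chi_a$ and exploit the continuity of $a \mapsto \Gamma_a$ in $\mathcal B_2$. Setting $K_+(x,t) := \Gamma_x(x,t) \chi_\Omega(x,t)$ produces the candidate $\mathscr K_+ \in \mathcal B_2^+$; one then defines $\mathcal I + \mathscr K_- := (\mathcal I + \mathscr K)^{-1}(\mathcal I + \mathscr K_+)^{-1}$ and must verify $\mathscr K_- \in \mathcal B_2^-$, which reduces to showing the kernel of this product vanishes above the diagonal. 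Finally, (ii) $\Rightarrow$ \emph{factorization} is handled by taking a solution $\mathscr X$ of the first equation in~\eref{eq.a2}, setting $\mathscr K_+ := \mathscr X$, defining $\mathscr K_-$ from the equation $(\mathcal I + \mathscr K_-)^{-1} = (\mathcal I + \mathscr K_+)(\mathcal I + \mathscr K)$, and using $\mathscr P^+ (\mathscr K + \mathscr K_+ + \mathscr K_+ \mathscr K) = 0$ to conclude $\mathscr K_+ + \mathscr K + \mathscr K_+ \mathscr K \in \mathcal B_2^-$, so that $(\mathcal I + \mathscr K_-)^{-1} - \mathcal I \in \mathcal B_2^-$.

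The hard part will be the passage from (i) to the existence of the factorization. The main obstacles are showing that the family $\Gamma_a$ depends nicely enough on $a$ for $K_+(x,t) := \Gamma_x(x,t)$ to define an $L_2$-kernel, and verifying the triangular character of $\mathscr K_-$; both hinge on the identity $\partial_a \Gamma_a(x,t) = \Gamma_a(x,a)\Gamma_a(a,t)$ (cf.\ Proposition~\ref{pr.33}), which is established first for continuous kernels by direct differentiation and then extended by density using the continuity of the map $(a,K) \mapsto \Gamma_a$ in $L_2((0,1)^2, M_r)$.
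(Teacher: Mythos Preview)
The paper does not prove Theorem~\ref{th.a1}; it is stated as a known fact with the citation~\cite[Ch.~4]{GGK1} (Gohberg--Krein, \emph{Theory of Volterra Operators}). So there is nothing to compare your argument against in the paper itself.

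That said, your outline is the standard Gohberg--Krein route and is essentially sound. A few small comments. In the step (ii)~$\Rightarrow$ \emph{factorization}, you should make explicit that once $\mathscr K_+ + \mathscr K + \mathscr K_+\mathscr K =:\mathscr Y \in \mathcal B_2^-$, the Volterra property of~$\mathscr Y$ gives invertibility of $\mathcal I+\mathscr Y$ with $(\mathcal I+\mathscr Y)^{-1}-\mathcal I\in\mathcal B_2^-$; only then may you \emph{define} $\mathscr K_-$ by $(\mathcal I+\mathscr K_-)^{-1}=(\mathcal I+\mathscr K_+)(\mathcal I+\mathscr K)$, and this incidentally yields invertibility of $\mathcal I+\mathscr K$ without assuming it. For (i)~$\Rightarrow$ \emph{factorization} you do not actually need the differential identity of Proposition~\ref{pr.33}; in the Gohberg--Krein argument for $\mathcal B_2$-kernels it suffices to use the continuity of $a\mapsto \Gamma_a$ in Hilbert--Schmidt norm and the resolvent-type identity $\Gamma_b-\Gamma_a = -(\mathcal I+\chi_b\mathscr K\chi_b)^{-1}(\chi_b\mathscr K\chi_b-\chi_a\mathscr K\chi_a)(\mathcal I+\Gamma_a)$ to control the dependence on~$a$, which is enough to show that $K_+(x,t):=\Gamma_x(x,t)$ lies in $L_2(\Omega,M_r)$ and that the resulting $\mathscr K_-$ is upper-triangular.
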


In terms of the kernels the first equation in~\eref{eq.a2}
takes the form
\begin{equation}\label{eq.a3}
    X(s,t)+K(s,t) +\int_0^s X(s,\xi) K(\xi,t) \,\rmd \xi =0,
        \qquad (s,t)\in\Omega.
\end{equation}

Theorem~\ref{th.a1} yields the following connection between the Krein accelerants
and the Krein equation, namely:

\begin{proposition}\label{th.a1a}
Assume that $H\in L_2((-1,1),M_r)$ and that~$\mathscr H$ is an integral operator of~\eref{eq.30}.
Then the following statements are equivalent:
 \begin{enumerate}
    \item the function $H$ belongs to the class $\mathfrak H_2$;
    \item the operator $\mathcal I+\mathscr H$ admits a factorization;
    \item the Krein equation~\eref{eq.19} has a solution in $L_2(\Omega,M_r)$.
 \end{enumerate}
\end{proposition}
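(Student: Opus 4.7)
The plan is to derive Proposition~\ref{th.a1a} as a direct specialization of Theorem~\ref{th.a1} to the convolution operator~$\mathscr H$, whose kernel is $K(x,t)=H(x-t)$. Since $H\in L_2((-1,1),M_r)$, this kernel lies in $L_2((0,1)^2,M_r)$, so $\mathscr H\in \mathcal B_2$ and Theorem~\ref{th.a1} is applicable with $\mathscr K=\mathscr H$.

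For the equivalence (i)$\Leftrightarrow$(ii), I would simply observe that condition~(i) of Theorem~\ref{th.a1}, when specialized to $K(x,t)=H(x-t)$, asserts that for every $a\in[0,1]$ the equation
\[
   f(x)+\int_0^a H(x-t)f(t)\,\rmd t = 0, \qquad x\in(0,1),
\]
admits only the trivial $L_2$ solution. This is precisely Definition~\ref{de.11} of the class~$\mathfrak H_2$. Theorem~\ref{th.a1} then yields the equivalence with the existence of a factorization of~$\mathcal I+\mathscr H$.

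For (ii)$\Leftrightarrow$(iii), I would invoke the other characterization in Theorem~\ref{th.a1}: admitting a factorization is equivalent to the first equation in~\eref{eq.a2} having a solution $\mathscr X\in \mathcal B_2$, whose kernel formulation is~\eref{eq.a3}. Substituting $K(x,t)=H(x-t)$, equation~\eref{eq.a3} becomes identical to the Krein equation~\eref{eq.19}. Any solution $\mathscr X\in\mathcal B_2$ of the first equation in~\eref{eq.a2} satisfies $\mathscr X=-\mathscr P^+\mathscr K-\mathscr P^+\mathscr X\mathscr K$, so it automatically belongs to~$\mathcal B_2^+$; its kernel vanishes on~$\Omega^-$ and its restriction to~$\Omega$ is an $L_2(\Omega,M_r)$-solution of~\eref{eq.19}. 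Conversely, any solution $R\in L_2(\Omega,M_r)$ of~\eref{eq.19}, extended by zero on~$\Omega^-$, defines a kernel in $L_2((0,1)^2,M_r)$ whose Hilbert--Schmidt operator lies in~$\mathcal B_2^+$ and solves the first equation in~\eref{eq.a2}.

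No step is genuinely hard; the whole argument is a translation between operator and kernel languages. The only point that needs a modicum of care is matching the operator-theoretic support condition defining $\mathcal B_2^+$ with the triangular domain~$\Omega$ on which the Krein equation is posed, and this is handled by the trivial extension/restriction of kernels indicated above.
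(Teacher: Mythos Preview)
Your proposal is correct and matches the paper's own treatment: the paper presents Proposition~\ref{th.a1a} as an immediate consequence of Theorem~\ref{th.a1} without a separate proof, and you have faithfully spelled out the specialization $K(x,t)=H(x-t)$ and the kernel/operator translation that makes this work. One minor point worth noting: the membership $H\in\mathfrak H_2$ in Definition~\ref{de.11} includes the requirement that $H$ be even, which your argument (like the paper's) does not recover from~(ii) or~(iii); this is harmless because the operator~$\mathscr H$ of~\eref{eq.30} is introduced in Section~\ref{sec.3} under the standing assumption that $H$ is even, so evenness is implicitly part of the hypothesis.
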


Theorem~\ref{th.a1} also implies the following lemma.
\begin{lemma}\label{le.a2}
If $K$ belongs to~$L_2((0,1)^2,M_r)$, then equation~\eref{eq.a3}
has at most one solution, and the equation
\[
    X(s,t) +\int_0^s X(s,\xi) K(\xi,t) \,\rmd t =0,
        \qquad (s,t)\in\Omega,
\]
has only zero solution in $L_2(\Omega,M_r)$.
\end{lemma}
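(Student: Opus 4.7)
The plan is to prove the second (homogeneous) statement first and then derive the first as an immediate consequence: if $X_1,X_2 \in L_2(\Omega,M_r)$ both solve (eq.a3), then $Y := X_1 - X_2$ satisfies the homogeneous equation, so triviality of homogeneous solutions automatically yields $X_1=X_2$.

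To attack the homogeneous equation, I would extend the candidate $X$ by zero to $\Omega^-$ and identify it with the kernel of a Hilbert--Schmidt operator $\mathscr X \in \mathcal B_2^+$. Because $X$ vanishes outside $\Omega$, the integral $\int_0^s X(s,\xi)K(\xi,t)\,\rmd\xi$ is exactly $(\mathscr X\mathscr K)(s,t)$, and the pointwise identity on $\Omega$ becomes the single operator equation
$$
   \mathscr X + \mathscr P^+(\mathscr X\mathscr K) = 0.
$$
Adding $\mathscr P^-(\mathscr X\mathscr K)$ to both sides collapses the left-hand side into $\mathscr X(\mathcal I + \mathscr K)$, giving the crucial reformulation
$$
   \mathscr X(\mathcal I + \mathscr K) = \mathscr P^-(\mathscr X\mathscr K) \in \mathcal B_2^-.
$$

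The final step is to exploit Theorem~\ref{th.a1}: in the setting in which the lemma is applied the operator $\mathcal I + \mathscr K$ admits a factorization $(\mathcal I + \mathscr K_+)^{-1}(\mathcal I + \mathscr K_-)^{-1}$ with $\mathscr K_\pm \in \mathcal B_2^\pm$. Multiplying the displayed identity on the right by $\mathcal I + \mathscr K_-$ places $\mathscr X(\mathcal I + \mathscr K_+)^{-1}$ in $\mathcal B_2^-$, since $\mathcal B_2^-$ is a right ideal for multiplication by elements of $\mathcal I + \mathcal B_2^-$. Simultaneously, because $\mathscr X \in \mathcal B_2^+$ and $(\mathcal I + \mathscr K_+)^{-1} - \mathcal I \in \mathcal B_2^+$, the same product belongs to $\mathcal B_2^+$. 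The direct sum decomposition $\mathcal B_2 = \mathcal B_2^+ \oplus \mathcal B_2^-$ forces $\mathscr X(\mathcal I + \mathscr K_+)^{-1} = 0$, and right multiplication by $\mathcal I + \mathscr K_+$ yields $\mathscr X = 0$, i.e. $X = 0$ in $L_2(\Omega,M_r)$. The main obstacle is really just the algebraic bookkeeping keeping the $\mathcal B_2^\pm$ memberships straight; this is standard and rests on the subalgebra structure together with the Volterra fact that $(\mathcal I + \mathscr V)^{-1} - \mathcal I$ stays in $\mathcal B_2^\pm$ whenever $\mathscr V \in \mathcal B_2^\pm$.
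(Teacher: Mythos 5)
Your operator-theoretic core is sound and is exactly the mechanism behind the paper's own (one-line) proof: the paper offers no argument beyond ``Theorem~\ref{th.a1} also implies the following lemma'', and the intended reasoning is precisely your chain --- extend $X$ by zero so that $\mathscr X\in\mathcal B_2^+$, rewrite the pointwise identity on $\Omega$ as $\mathscr X+\mathscr P^+(\mathscr X\mathscr K)=0$, hence $\mathscr X(\mathcal I+\mathscr K)\in\mathcal B_2^-$, then use a factorization $(\mathcal I+\mathscr K_+)^{-1}(\mathcal I+\mathscr K_-)^{-1}$ together with the standard triangular bookkeeping ($\mathcal B_2^\pm$ are subalgebras, $(\mathcal I+\mathscr V)^{-1}-\mathcal I\in\mathcal B_2^\pm$ for Volterra $\mathscr V\in\mathcal B_2^\pm$, and $\mathcal B_2^+\cap\mathcal B_2^-=\{0\}$) to force $\mathscr X=0$. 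All of those memberships are correct as you state them.

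The genuine gap is the provenance of the factorization. The lemma assumes nothing about $K$, whereas you import the hypothesis that $\mathcal I+\mathscr K$ admits a factorization ``in the setting in which the lemma is applied''; and because you deduce the uniqueness claim from the homogeneous claim, \emph{both} of your conclusions become conditional. For the uniqueness claim the gap is removable and you should remove it: the existence of even one solution of~\eref{eq.a3} is itself condition \emph{(ii)} of Theorem~\ref{th.a1}, so the factorization is automatic, and your triangular argument applied to the difference of two solutions then gives uniqueness with no extra hypothesis --- note this reverses your order of argument, since you cannot first establish the homogeneous claim unconditionally and then specialize. Indeed, for the homogeneous claim your proviso is unavoidable, because the unconditional statement is false: take $r=1$, let $w:=-2\cdot\mathbf 1_{(0,1/2)}+2\cdot\mathbf 1_{(3/4,1)}$, set $K(\xi,t):=w(t)$ and $X(s,t):=-\mathbf 1_{[1/2,3/4]}(s)\,w(t)$ on $\Omega$; since $\int_0^s w(\xi)\,\rmd\xi=-1$ for every $s\in[1/2,3/4]$, a direct substitution shows that $X$ solves the homogeneous equation, yet $X\neq 0$ in $L_2(\Omega,M_1)$. (For this $K$ the truncations $\mathcal I+\chi_a\mathscr K\chi_a$ are singular for all $a\in[1/2,3/4]$, so by Theorem~\ref{th.a1} no factorization exists and~\eref{eq.a3} has no solution at all --- the uniqueness claim holds vacuously there.) So your argument is the right one and covers every use of the lemma in the paper, where $K(\xi,t)=H(\xi-t)$ with $H\in\mathfrak H_2$ and condition \emph{(i)} of Theorem~\ref{th.a1} supplies the factorization; but as written it does not prove the first claim in its stated unconditional form (fixable by the bootstrap above), and no argument can prove the second claim for arbitrary $K\in L_2((0,1)^2,M_r)$ --- the lemma's literal formulation overstates what is true.
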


We denote by~$G_2$ the set of all functions
 $K:[0,1]^2\rightarrow M_r$
having the property that the mappings
\[
    [0,1]\ni x \mapsto K(x,\cdot)\in \mathbf L_2,\qquad
    [0,1]\ni t \mapsto K(\cdot,t)\in \mathbf L_2
\]
are continuous on the interval $[0,1]$. It is obvious that
$G_2\subset L_2((0,1)^2,M_r)$. The set $G_2$ becomes a Banach
space upon introducing the norm
\[
    \|K\|_{G_2} :=
        \max\Bigl\{\max_{x\in[0,1]}\|K(x,\cdot)\|_{\mathbf L_2},
        \max_{t\in[0,1]}\|K(\cdot,t)\|_{\mathbf L_2}\Bigr\}.
\]
Denote by $G^+_2$ and $G^-_2$ the subspaces of~$ G_2$ consisting
of all functions~$K$ that satisfy the condition $K(x,t)=0$ a.e. in
$\Omega^-$ and $K(x,t)=0$ a.e. in $\Omega$, respectively.

Also denote by $\mathfrak G_2$ a Banach algebra consisting of integral
operators~$\mathscr{K}$ with kernels~$K$ belonging to~$G_2.$ The
norm in  $\mathfrak G_2$ is defined via
\[
    \|\mathscr K\|_{\mathfrak G_2}=\|K\|_{G_2},\qquad
    \mathscr{K}\in\mathfrak G_2.
\]
Denote by $\mathfrak G^+_2$ and $\mathfrak G^-_2$ the set of those
$\mathscr K\in\mathfrak G_2$, whose kernels belong respectively to~$G_2^+$ and $G_2^-$. Observe that~$\mathfrak G^+_2$  and~$\mathfrak G^-_2$
form closed subalgebras of~$\mathfrak G_2$ and that~$\mathfrak
G_2^\pm\subset \mathcal B_2^\pm$.

\begin{lemma}\label{le.a3}
The mapping $A\mapsto (\mathcal I+A)^{-1}-\mathcal I=:\eta(A)$ acts continuously in
$G_2^+$.
\end{lemma}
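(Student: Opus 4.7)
The plan is to combine the classical Neumann series for the resolvent of a Volterra operator with the resolvent identity, exploiting the fact that $\mathfrak G_2^+$ is a Banach subalgebra of~$\mathcal B(L_2)$ under operator composition. The submultiplicativity $\|\mathscr K_1\mathscr K_2\|_{\mathfrak G_2}\le\|\mathscr K_1\|_{\mathfrak G_2}\|\mathscr K_2\|_{\mathfrak G_2}$ is a routine consequence of Cauchy--Bunyakovsky applied in the inner variable of the composition kernel $\int K_1(x,\xi)K_2(\xi,t)\,\rmd\xi$, together with the continuity of the shifts $x\mapsto K_j(x,\cdot)$ and $t\mapsto K_j(\cdot,t)$ inherited from the definition of~$G_2$.

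The key technical step is the factorial-type estimate on the iterated kernels $K_n$ of $A^n$:
\[
    \|A^n\|_{\mathfrak G_2} \le \frac{\|A\|_{\mathfrak G_2}^n}{\sqrt{(n-1)!}}, \qquad n\in\mathbb N.
\]
I would obtain it by introducing $\phi_n(x):=\|K_n(x,\cdot)\|_{\mathbf L_2}^2$ and using the Volterra recursion $K_{n+1}(x,t)=\int_t^x K(x,\xi)K_n(\xi,t)\,\rmd\xi$ together with Cauchy--Bunyakovsky in~$\xi$ and Fubini to derive the Gronwall-type bound $\phi_{n+1}(x)\le\|A\|_{\mathfrak G_2}^2\int_0^x\phi_n(\xi)\,\rmd\xi$, iterating which gives $\phi_n(x)\le\|A\|_{\mathfrak G_2}^{2n}x^{n-1}/(n-1)!$; the symmetric bound on $\|K_n(\cdot,t)\|_{\mathbf L_2}$ is identical. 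Consequently the Neumann series
\[
    \eta(A) = \sum_{n=1}^\infty (-1)^n A^n
\]
converges absolutely in $\mathfrak G_2^+$, so $\eta(A)\in\mathfrak G_2^+$, and $\|\eta(A)\|_{\mathfrak G_2}\le F(\|A\|_{\mathfrak G_2})$, where $F(s):=\sum_{n\ge1}s^n/\sqrt{(n-1)!}$ is a continuous increasing function of~$s\ge0$. Since the same series also represents $(\mathcal I+A)^{-1}-\mathcal I$ in operator norm, this identifies the kernel of $\eta(A)$ as an element of $G_2^+$.

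Continuity of~$\eta$ then follows from the resolvent identity
\[
    \eta(A)-\eta(B) = (\mathcal I+\eta(A))(B-A)(\mathcal I+\eta(B)),
\]
which, upon expansion of the right-hand side into the four summands $(B-A)$, $\eta(A)(B-A)$, $(B-A)\eta(B)$, $\eta(A)(B-A)\eta(B)$, and application of the submultiplicativity in $\mathfrak G_2^+$, yields
\[
    \|\eta(A)-\eta(B)\|_{\mathfrak G_2}
        \le \bigl(1+F(\|A\|_{\mathfrak G_2})\bigr)\bigl(1+F(\|B\|_{\mathfrak G_2})\bigr)\,\|A-B\|_{\mathfrak G_2}.
\]
The principal obstacle is establishing the factorial-decay estimate on $\|A^n\|_{\mathfrak G_2}$; once that is secured, the rest is bookkeeping.
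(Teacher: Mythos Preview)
Your proposal is correct and follows essentially the same line as the paper's own proof: the paper also establishes the factorial bound $\|A^{n+1}\|_{G_2^+}\le (n!)^{-1/2}\|A\|_{G_2^+}^{n+1}$ via Cauchy--Bunyakowski in the Volterra iteration, and then deduces a local Lipschitz estimate with the same constant $C(r)^2=(1+F(r))^2$. The only cosmetic difference is that the paper obtains the Lipschitz bound by applying the telescoping identity $A^{n+1}-B^{n+1}=\sum_{k=0}^n A^{n-k}(A-B)B^k$ term by term in the Neumann series, whereas you package the same computation into the resolvent identity; the two arguments are equivalent and yield the identical estimate.
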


\begin{proof}
Using the Cauchy--Bunyakowski inequality, we find that for every
$A\in G_2^+$
\begin{equation}\label{eq.a4}
    ||A^{n+1}||_{G_2^+}\le(n!)^{-1/2}||A||_{G_2^+}^{n+1},\qquad
               n\in\mathbb Z_+.
\end{equation}
Let $A,B\in G_2^+$ and $||A||_{G_2^+}, ||B||_{G_2^+}\le r$ for
some fixed~$r>0$. Since
\[
    \eta(A)-\eta(B)=\sum_{n=1}^{\infty}[(-A)^n -(-B)^n],
\]
in view of~\eref{eq.a4} and the identity~$A^{n+1}-B^{n+1}=\sum_{k=0}^{n}A^{n-k}(A-B)B^{k}$, straightforward
calculations show that the mapping $\eta$ acts continuously in~$G^+_2$ and that
\[
    \|\eta(A)-\eta(B)\|_{G_2^+}\le C^2(r)\|A-B\|_{G_2^+}
\]
with
\[
    C(r):=1+\sum_{k=0}^{\infty}(k!)^{-1/2}r^{k+1}.
\]
The proof is complete.
\end{proof}


\def\thesection{Appen\-dix~\Alph{section}}
\section{ \textbf{Lemmas about orthogonal projectors}}\label{add.2}
\def\thesection{\Alph{section}}

Suppose that~$H$ is a Hilbert space and denote by~$\mathcal B(H)$ the
Banach algebra of all bounded linear operators in~$H$.

\begin{lemma}\label{le.b1}
Let $(P_n)_{n=1}^{\infty}$ and $(G_n)_{n=1}^{\infty}$ be
sequences of pairwise orthogonal projectors in $H$ of finite rank such that
$
\sum_{n=1}^{\infty}P_n=\sum_{n=1}^{\infty}G_n=I_H$ and
$\sum_{n=1}^{\infty}||P_n-G_n||^2<\infty$. Then
there exists $N_0\in\mathbb N$ such that, for all $N\ge N_0$,
\[
    \sum_{n=1}^N\rank P_n=\sum_{n=1}^N\rank G_n.
\]
\end{lemma}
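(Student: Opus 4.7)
The plan is to introduce the partial-sum projections $S_N := \sum_{n\le N} P_n$ and $T_N := \sum_{n\le N} G_n$, each of which is an orthogonal projector of finite rank (being a finite sum of pairwise orthogonal projectors), with $\rank S_N = \sum_{n\le N}\rank P_n$ and $\rank T_N = \sum_{n\le N}\rank G_n$. The conclusion of the lemma is then equivalent to showing $\rank S_N = \rank T_N$ for all sufficiently large $N$, and I would do this by producing an injective linear map between $\Ran S_N$ and $\Ran T_N$ in each direction.

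The key estimate I would prove is
\[
    \|T_N^\perp S_N\|^2 \le \sum_{n>N}\|P_n - G_n\|^2,
\]
together with its symmetric companion $\|S_N^\perp T_N\|^2 \le \sum_{n>N}\|P_n - G_n\|^2$; both right-hand sides tend to zero by hypothesis. To establish the first inequality I would use that the range of $T_N^\perp$ decomposes orthogonally as $\bigoplus_{n>N}\Ran G_n$, so for every $x\in H$ one has $\|T_N^\perp S_N x\|^2 = \sum_{n>N}\|G_n S_N x\|^2$. The crucial observation is that for $n>N$ the identity $P_n S_N = 0$ (a direct consequence of the pairwise orthogonality of the $P_m$) reduces $G_n S_N x$ to $(G_n - P_n)S_N x$, yielding $\|G_n S_N x\| \le \|P_n - G_n\|\,\|x\|$; summing over $n>N$ gives the bound. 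The symmetric estimate follows by interchanging the roles of $(P_n)$ and $(G_n)$.

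With the two estimates in hand I would choose $N_0$ so that both $\|T_N^\perp S_N\|<1$ and $\|S_N^\perp T_N\|<1$ for all $N\ge N_0$, and show that the linear map $\Ran S_N \ni y \mapsto T_N y \in \Ran T_N$ is injective: a kernel element $y$ would satisfy $y = S_N y$ and hence $\|y\| = \|T_N^\perp y\| = \|T_N^\perp S_N y\| \le \|T_N^\perp S_N\|\,\|y\|$, forcing $y = 0$. This gives $\rank S_N \le \rank T_N$, and the analogous map $\Ran T_N \to \Ran S_N$ supplies the reverse inequality. I expect the main obstacle to be identifying the right quantity to estimate: attacking $\|S_N - T_N\|$ head-on fails because one only controls $\sum \|P_n - G_n\|^2$, not $\sum \|P_n - G_n\|$; the correct move is to estimate the mixed product $\|T_N^\perp S_N\|$, where the cancellation $P_n S_N = 0$ for $n>N$ converts a difference of two sums of projections into a single perturbation term per summand and allows a Bessel-type square-summable bound.
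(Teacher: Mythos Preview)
Your argument is correct, and it is closely related to the paper's, but your closing remark that ``attacking $\|S_N-T_N\|$ head-on fails'' is mistaken: the paper does exactly that. Writing $P=S_N$ and $G=T_N$, the paper uses completeness to express $P-G=-\sum_{n>N}(P_n-G_n)$, decomposes each summand as $P_n(P_n-G_n)+(P_n-G_n)G_n$, and bounds the quadratic form $((P-G)f\mid f)$ by Cauchy--Schwarz against the Bessel sums $\sum_{n>N}\|P_nf\|^2\le\|f\|^2$ and $\sum_{n>N}\|G_nf\|^2\le\|f\|^2$, obtaining $\|P-G\|^2\le 4\sum_{n>N}\|P_n-G_n\|^2$; it then cites the standard fact (Kato) that two orthogonal projections at norm-distance less than~$1$ have equal rank. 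Your route via $\|T_N^\perp S_N\|$ is a minor variant of the same cancellation idea---note $T_N^\perp S_N=T_N^\perp(S_N-T_N)$, so your quantity is dominated by the paper's---and gains a slightly better constant ($1$ in place of~$4$); constructing the injections by hand avoids the appeal to Kato but is essentially how that result is proved.
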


\begin{proof}
Let us choose a natural $N_0$ such that
$\sum_{n=N_0}^\infty\|P_n- G_n\|^2 <\frac14$. Fix an arbitrary
natural $N\ge N_0$ and consider the orthogonal projectors
$P=\sum_{n=1}^N P_n$ and $G=\sum_{n=1}^N G_n$. We shall now show that~$\|P-G\| <1$. Indeed, the assumptions of the lemma imply that
\[
    P-G=\sum_{n=N+1}^\infty (P_n- G_n)=\sum_{n=N+1}^\infty P_n(P_n- G_n)
        +\sum_{n=N+1}^\infty (P_n- G_n)G_n.
\]
Thus for every~$f\in H$ of norm~$1$ we get the inequality
\[
    |((P-G)f|f)_H|\le \sum_{n=N+1}^\infty(\|P_nf\|+\|G_nf\|)\|(P_n- G_n)f\|.
\]
Using the Cauchy--Bunyakowski inequality, we obtain that
\[
    \|P-G\|^2\le 4\sum_{n=N+1}^\infty\|P_n-G_n\|^2 <1.
\]
The inequality $\|P-G\|<1$ implies (see~\cite[Ch.1]{Kat})
that $\rank P=\rank G$, and the proof is complete.
\end{proof}

\begin{lemma}\label{le.b2}
Assume that $(A_j)_{j=1}^\infty$ is a sequence in~$\mathcal B(H)$ and that $(G_j)_{j=1}^\infty$ is a sequence of
pairwise orthogonal projectors such that the following holds:
\begin{enumerate}
   \item   the series $\sum_{j=1}^\infty A_j$  converges in the strong operator topology to an operator $A$;
   \item   the orthogonal projector $G:=I_H -\sum_{j=1}^\infty G_j$ is of finite rank;
   \item   $\sum_{j=1}^\infty \|A_j-G_j\|^2 < 1$ and $\rank A_j\le\rank G_j<\infty$ for every $j\in\mathbb N$.
\end{enumerate}
Then $\codim \Ran A \ge \rank G$.
\end{lemma}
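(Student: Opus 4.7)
The plan is to factor $A = LM$ through the closed subspace $\Ran\Pi := \Ran(I_H - G) = \bigoplus_j \Ran G_j$, with $M: H \to \Ran\Pi$ and $L: \Ran\Pi \to H$, and to argue that $L$ is a small-norm perturbation of the natural inclusion $\iota: \Ran\Pi \hookrightarrow H$. The inclusion $\iota$ is Fredholm with $\ker\iota = \{0\}$, closed range $\Ran\iota = \Ran\Pi$, and $\codim\Ran\iota = \rank G =: m$, so its index equals $-m$. Provided $\|L - \iota\| < 1$, the operator $L$ remains bounded below (hence injective with closed range, $\ker L = \{0\}$) and stays Fredholm with the same index $-m$, which forces $\codim\Ran L = m$. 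Since $\Ran A = \Ran(LM) \subset \Ran L$, this yields $\codim\Ran A \ge \codim\Ran L = m$.

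For the factorization, for each $j$ I would use the rank bound $\rank A_j \le \dim W_j$ (where $W_j := \Ran G_j$) to write $A_j = L_j M_j$ with $M_j: H \to W_j$ and $L_j: W_j \to H$. Concretely, let $V_j := \overline{\Ran A_j^*}$, of dimension at most $\dim W_j$; then $A_j = A_j P_{V_j}$ because $V_j^\perp = \ker A_j$. Picking an isometry $U_j: V_j \to W_j$ via the polar decomposition of the restricted projection $G_j|_{V_j}: V_j \to W_j$ (which is close to an isometry whenever $A_j$ is close to $G_j$) and setting $M_j := U_j P_{V_j}$, $L_j := A_j U_j^*$, one obtains $L_j M_j = A_j$ together with pointwise estimates $\|L_j - \iota_j\| + \|M_j - \pi_j\| \le c \|A_j - G_j\|$ for a universal constant $c$, where $\iota_j: W_j \hookrightarrow H$ is the inclusion and $\pi_j: H \to W_j$, $f \mapsto G_j f$, is the orthogonal projection viewed with codomain $W_j$.

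To assemble the global maps, set $L((f_j)_j) := \sum_j L_j f_j$ for $(f_j)_j \in \bigoplus_j W_j = \Ran\Pi$ and $Mf := (M_j f)_j$ for $f \in H$. Pairwise orthogonality of the $W_j$'s together with the Cauchy--Schwarz inequality give $\|L - \iota\|^2 \le \sum_j \|L_j - \iota_j\|^2 \le c^2 \sum_j \|A_j - G_j\|^2$, and an analogous bound for $\|M - \Pi\|^2$; with the constants chosen appropriately, both are strictly less than~$1$ by the hypothesis $\sum_j \|A_j - G_j\|^2 < 1$. The identity $A = LM$ on $H$ then follows from the strong convergence $\sum_j A_j f = Af$, so the Fredholm conclusion above applies and delivers $\codim\Ran A \ge m$. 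The main obstacle is Step~1 — establishing the pointwise norm control $\|L_j - \iota_j\|, \|M_j - \pi_j\| \le c\|A_j - G_j\|$ uniformly in $j$; this rests on a careful analysis of the polar decomposition of $G_j|_{V_j}$, together with the observation that $A_j$ is close to $G_j$ for all but finitely many $j$ (the finitely many remaining indices being handled by a direct estimate using $\|A_j\|$ bounded and $\rank A_j$ finite).
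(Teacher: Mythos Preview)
Your overall strategy---factor through $\Ran\Pi=\bigoplus_j W_j$ and show the resulting map $L:\Ran\Pi\to H$ is a norm-less-than-one perturbation of the isometric inclusion $\iota$, hence Fredholm of index $-m$---is correct and is essentially what the paper does. The difference is that you try to achieve an \emph{exact} factorization $A=LM$ via polar decompositions $U_j$, and this is precisely what creates the obstacle you flag: the constant $c$ in $\|L_j-\iota_j\|\le c\|A_j-G_j\|$ is not obviously $1$, and the hypothesis is only $\sum_j\|A_j-G_j\|^2<1$, so $c>1$ would break the argument (the Fredholm stability threshold for the isometry $\iota$ is exactly $1$).

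The paper avoids this by \emph{not} insisting on $A=LM$. It takes the simplest possible $L$, namely $L_j:=A_j|_{W_j}$ (equivalently, it works with $\tilde A:=\sum_j A_jG_j=(I_H+B)(I_H-G)$ where $B:=\sum_j(A_j-G_j)G_j$). Then $(L-\iota)|_{W_j}=(A_j-G_j)|_{W_j}$, so $\|L-\iota\|\le\bigl(\sum_j\|A_j-G_j\|^2\bigr)^{1/2}<1$ with constant exactly $1$, and $L$ (equivalently $I_H+B$) is invertible on its domain, giving $\codim\Ran\tilde A=\rank G$. The price is that now $LM\ne A$; instead one shows $\Ran A\subset\Ran L$ directly. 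The key observation is that $\|A_j-G_j\|<1$ forces $A_j|_{W_j}$ to be injective (if $w\in W_j$, $w\ne0$, then $\|A_jw-w\|<\|w\|$ so $A_jw\ne0$), and combined with $\rank A_j\le\rank G_j$ this gives $\Ran A_j=A_jW_j$. Hence $\bigcup_j\Ran A_j\subset\Ran L$, the latter is closed, and $\Ran A$ lies in the closed linear span of the $\Ran A_j$.

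In short: drop the polar decomposition and the exact factorization. Take $L_j=A_j\iota_j$, note $c=1$ trivially, and replace ``$A=LM$'' by ``$\Ran A\subset\Ran L$'' using $\Ran A_j=A_jW_j$. That is the paper's proof, and it removes your main obstacle rather than confronting it.
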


\begin{proof} It follows from the assumptions of the lemma that, for every
$f\in L_2$,
\[
    \sum\limits_{j=1}^\infty \|(A_jG_j -G_j)f\|\le
    \sum\limits_{j=1}^\infty \|A_j -G_j\|\|G_jf\|\le
    \biggl(\sum\limits_{j=1}^\infty \|A_j-G_j\|^2  \biggr)^{1/2} \|f\|
\]
and, therefore, the series $\sum_{j=1}^\infty (A_jG_j -G_j)$
strongly converges to some operator $B\in\mathcal B(H)$ with
$\|B\|<1$. Hence the series $\sum_{j=1}^\infty A_jG_j$ strongly
converges to the operator $\tilde A:=(I_H+B)(I_H -G)$. Since the
operator $I_H +B$ is invertible, $\tilde A$ has a closed range and
$\codim\Ran\tilde A=\rank G$. Since $\rank A_j\le\rank G_j<\infty$, we have
\begin{equation}\label{eq.2b}
   \Ran A_jG_j=\Ran A_j
\end{equation}
for all $j\in\mathbb N$. Indeed, if~\eref{eq.2b} does not hold for some~$k\in\mathbb N$, then there exists $c\in\Ran G_k$ such that~$A_k c=0$. Therefore
$\|(A_k -G_k)c\|=\|c\|$, which implies that~$\|A_k -G_k\|\ge 1$ thus contradicting assumption~$(iii)$.

We denote by $X$ the closed linear hull of the
set~$\cup_{j\in\mathbb N} \Ran A_j$.  It is obvious that $\Ran
A\subset X$. It follows from~\eref{eq.2b} and the definition of
the operator $\tilde A$ that $\Ran\tilde A$ contains the set
$\cup_{j\in\mathbb N} \Ran A_j$. But, as already noted,
$\Ran\tilde A$ is a closed set. Thus $ \Ran A\subset
X\subset\Ran\tilde A$ and, therefore, $\codim\Ran A
\ge\codim\Ran\tilde A=\rank G$. The proof is complete.
\end{proof}

\begin{lemma}\label{le.b3}
Let $\{A_j\}_{j=0}^n$ be a set of self-adjoint operators
from the algebra $\mathcal B(H)$ that are of finite rank for $j\ne0$. If
\[
    \sum\limits_{j=0}^n A_j=I_H, \qquad
    \sum\limits_{j=1}^n \rank A_j\le \codim\Ran A_0,
\]
then $\{A_j\}_{j=0}^n$ is the set of pairwise orthogonal
projectors.
\end{lemma}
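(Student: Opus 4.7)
\medskip\noindent\textbf{Proof proposal for Lemma~\ref{le.b3}.}
The plan is to first extract an orthogonal direct-sum decomposition of $H$ controlled by $A_0$, show that $A_0$ is forced to be the orthogonal projection onto one of the summands, and then argue that the remaining $A_j$ restrict to an algebraic direct-sum decomposition of the other summand which, by self-adjointness, must actually be orthogonal.

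First I would set $V:=\ker A_0$ and $K:=\sum_{j=1}^n A_j$. Since $K$ has finite rank, $A_0=I_H-K$ is self-adjoint Fredholm, so $\Ran A_0$ is closed and $H=V^\perp\oplus V$ with $\Ran A_0=V^\perp$. The key counting step: for $v\in V$ we have $v=\sum_{j=1}^n A_j v$, so $V\subseteq \sum_{j=1}^n \Ran A_j$; combining with the hypothesis $\sum_{j=1}^n\rank A_j\le\codim\Ran A_0=\dim V$ and the trivial inequality $\dim\sum\Ran A_j\le\sum\rank A_j$, every inequality must be an equality. Consequently
\[
  V=\bigoplus_{j=1}^n\Ran A_j\quad\text{(algebraic direct sum)},\qquad \dim V=\sum_{j=1}^n\rank A_j.
\]

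Next I would show $A_0=P_{V^\perp}$ and that each $A_j$ ($j\ge1$) vanishes on $V^\perp$. For $w\in V^\perp=\Ran A_0$ we have $A_0w\in V^\perp$, so $\sum_{j=1}^n A_jw=w-A_0w\in V^\perp$; on the other hand $\sum_{j=1}^n A_jw\in\sum\Ran A_j=V$, forcing $A_0w=w$ and $\sum_{j\ge1}A_jw=0$. Using the directness of the sum $V=\bigoplus\Ran A_j$ together with $A_jw\in\Ran A_j$, this in fact forces $A_jw=0$ for each individual $j\ge1$. Thus $A_0$ agrees with the identity on $V^\perp$ and with zero on $V$, i.e., $A_0$ is the orthogonal projection onto $V^\perp$, while each $A_j$ ($j\ge1$) annihilates $V^\perp$ and therefore maps $V$ into $V$ (by self-adjointness, $A_j=P_VA_jP_V$).

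Finally, I would analyze the restrictions $A_j|_V$. Since $V=\bigoplus_{j=1}^n\Ran A_j$ algebraically and $v=\sum_{j=1}^n A_jv$ with $A_jv\in\Ran A_j$, uniqueness of the direct-sum decomposition gives $A_j|_V=\pi_j$, the algebraic projection onto $\Ran A_j$ along $\bigoplus_{k\ne j}\Ran A_k$. In particular $A_j$ is idempotent; being also self-adjoint, it is an orthogonal projection. Two orthogonal projections whose ranges together span $V$ with matching dimension count must have pairwise orthogonal ranges, hence $A_jA_k=0$ for $j\ne k$ in $\{1,\dots,n\}$, and orthogonality with $A_0=P_{V^\perp}$ is automatic from $\Ran A_j\subseteq V$.

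The only delicate point I expect is the directness of the sum $\sum_{j=1}^n\Ran A_j=V$ in the possibly infinite-dimensional setting; but this is handled purely by the dimension inequality once one observes $V\subseteq\sum\Ran A_j$, so the argument is robust. Everything else is a routine consequence of self-adjointness forcing an algebraic idempotent to be an orthogonal projection.
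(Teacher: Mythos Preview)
Your proof is correct and follows essentially the same line as the paper's: both establish that $H$ decomposes as a direct sum of the ranges $\Ran A_j$ by combining $\sum A_j=I_H$ with the dimension hypothesis, and then use this directness together with $\sum A_j=I_H$ to force each $A_j$ to be idempotent and the products $A_jA_k$ ($j\ne k$) to vanish. The paper compresses the argument into two sentences---asserting the direct sum $H=\bigoplus_{j=0}^n\Ran A_j$ and then reading off $A_k(A_k-I_H)=0$ and $A_jA_k=0$ from $A_k(A_k-I_H)+\sum_{j\ne k}A_jA_k=0$---whereas you carefully separate out $A_0$ first (via the Fredholm observation that $\Ran A_0$ is closed) and then handle the finite-dimensional piece~$V$; but the substance is the same.
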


\begin{proof} It follows from the assumptions of the lemma that the space~$H$ is the direct sum of the subspaces $\Ran A_j$ for $j=0,\dots, n$ and that~$A_k(A_k-I_H) + \sum_{j\ne k} A_jA_k=0$ for every~$k\le n$.
It is obvious that the equality
is possible only if all the summands on the left hand side are equal to
zero. This immediately yields the statement of the lemma.
\end{proof}

\section*{References}

\end{document}